            \newtheorem{thm}{Theorem}[section]
          \newtheorem{prop}[thm]{Proposition}
          \newtheorem{lem}[thm]{Lemma}
          \newtheorem{rem}[thm]{Remark}
\newcommand{\dom}{{\partial \Omega}}
\newcommand{\bfx}{{\boldsymbol x}}
\DeclareMathOperator*{\argmin}{arg\,min}
\newproof{proof}{Proof}
\def\ov{\overline}
\def\D{{\mathcal D}}
\newcommand{\RR}{\mathbb R}
\newcommand{\N}{\mathbb N}
\newcommand{\rr}{{\RR_+}}
\newcommand{\ue}{\frac{1}{\varepsilon}}
\newcommand{\dt}{\partial_t}
\newcommand{\e}{\varepsilon}
\newcommand{\normel}{\left\|}
\newcommand{\normer}{\right\|}
\newcommand{\sgn}{{\rm sgn}}
\newcommand{\nud}{{\frac{1}{2}}}
\newcommand{\ddt}[1]{\frac{{\rm d} #1}{\rm d \it t}}
\newcommand{\ti}[1]{\tilde{#1}}
\newtheorem{coro}{Corollary}[section]
\newtheorem{defi}{Definition}[section]
\newtheorem{hypo}{Assumptions}[section]
\newcommand{\nrm}[2]{{\normel{#1}\normer}_{#2}}
\newcommand{\SysNoNumB}[1]{
\begin{displaymath}
\left\{
\begin{array}{#1}
}
\newcommand{\SysNoNumE}{
\end{array}
\right.
\end{displaymath}
}
\newcommand{\TabNoNumB}[1]{
\begin{displaymath}
\begin{array}{#1}
}
\newcommand{\TabNoNumE}{
\end{array}
\end{displaymath}
}
\newcommand{\TabNumB}[2]{
\begin{equation}
\label{#2}
\begin{array}{#1}
}
\newcommand{\TabNumE}{
\end{array}
\end{equation}
}
\newcommand{\SysNumB}[2]{
\begin{equation}
\left\{
\label{#2}
\begin{array}{#1}
}
\newcommand{\SysNumE}{
\end{array}
\right.
\end{equation}
}
\newcommand{\ddn}[1]{\,\partial_{\nu} #1\,}
\newcommand{\cM}{\mathcal{M}}
\newcommand{\etau}[1]{}%{\varepsilon^{p_{#1}}}
\newcommand{\chiu}[1]{\ensuremath{\chi_{#1}}}
\newcommand{\cR}{{\mathcal R}}
\newcommand{\rhoe}{\rho_\e}
\newcommand{\rhoi}{\rho_{I}}
\newcommand{\rhoz}{\rho_0}
\newcommand{\ztlip}{\zeta_{\rm Lip}}
\newcommand{\da}{\partial_a} 
\newcommand{\zeps}{z_\e}
\newcommand{\mueps}{\mu_{0,\varepsilon}}
\newcommand{\veps}{u_\e}
\newcommand{\vepsi}{u_{I}}
\newcommand{\zp}{z_p}
\newcommand{\bmax}{\beta_{M}}
\newcommand{\bmin}{\beta_{m}}
\newcommand{\ztmax}{\zeta_{M}}
\newcommand{\ztmin}{\zeta_{m}}
\newcommand{\mumin}{\mu_{0,m}}
\newcommand{\beps}{\beta_{\varepsilon}}
\newcommand{\bz}{\beta_0}
\newcommand{\zteps}{\zeta_\e}
\newcommand{\ztz}{\zeta_0}
\newcommand{\hrhoe}{\hat{\rho}_\varepsilon}
\newcommand{\muoz}{\mu_{1,0}}
\newcommand{\muzz}{\mu_{0,0}}
\newcommand{\muzm}{\mu_{0,\min}}
\newcommand{\muze}{\mu_{0,\e}}
\newcommand{\heps}{h_{\e}}
\newcommand{\cS}{{\mathcal S}}
\newcommand{\hrhoi}{\hat{\rho}_{\e,I}}
\newcommand{\hatw}{\hat{w}}
\newcommand{\hatmu}{\hat{\mu}}
\newcommand{\st}{\text{ s.t. }}
\newcommand{\hmu}{\hat{\mu}}
\newcommand{\tia}{{\ti{a}}}
\newcommand{\bhoz}{H^1_0(\om)}
\newcommand{\vepsp}{\veps^+}
\newcommand{\cE}{{\mathcal E}}
\newcommand{\rhow}{\varrho_w}
\newcommand{\dtbmax}{|\dt \bz|_{\infty}}
\newcommand{\dtztmax}{|\dt \ztz|_{\infty}}
\newcommand{\hz}{{\hat{z}_\e}}
\newcommand{\alev}{\;{\rm a.e}\;\;}
\newcommand{\qt}{Q_T}
\newcommand{\om}{{\Omega}}
 \newcommand{\tet}{{D_t^\tau}}
\newcommand{\xat}{\bfx,a,t}
\newcommand{\cL}{{\cal L}}
\begin{document}

\begin{frontmatter}

%   \title
%   {%From memory to friction : \\ the case of the heat equation
%   From memory to friction of adhesion forces~: \\ 
%   convergence %of regular solutions 
%   towards the heat equation \\
%   with  a non-constant coefficient
% }

\title{ Space dependent adhesion forces \\
mediated by transient elastic linkages~: \\
new convergence and global existence results
}

\author{Vuk Mili\v si\' c}
\address{Centre National de la Recherche Scientifique, \\
Laboratoire Analyse G{\'e}om{\'e}trie \& Applications,\\% (LAGA),
Universit{\'e} Paris 13,  Sorbonne Paris Cit\'e, France}
\ead{milisic@math.univ-paris13.fr}
\author{Dietmar Oelz}
\address{
School of Mathematics and Physics, 
University of Queensland, Australia} 
\ead{d.oelz@uq.edu.au}

 %\maketitle

\begin{keyword}
friction coefficient, protein linkages,  cell adhesion,  renewal equation,  effect of chemical bonds,  integral equation,  Volterra kernel. \MSC 35Q92, 35B40, 45D05, 35K20
\end{keyword}

\begin{abstract}
In the first part of this work we show the convergence with respect to an asymptotic parameter $\e$ of a delayed heat equation. 
It represents a mathematical extension of works considered previously by the authors \cite{MiOel.1,MiOel.2,MiOel.3}.
Namely, this is the first result involving delay operators approximating protein linkages coupled with 
a spatial elliptic second order operator.
For the sake of simplicity we choose the Laplace operator, although more general results could be derived.
%Techniques introduced here should be extended in a forthcoming paper to the more complex models presented in \cite{OeSch,OeSchVi}. 
The main arguments are (i) new energy estimates and (ii) a  stability result 
extended from the previous work to this more involved context. 
They allow to prove  convergence of the delay operator to 
a friction term together with the Laplace operator in the same asymptotic regime considered without the space dependence  in~\cite{MiOel.1}.
In a second part we extend fixed-point results for the fully non-linear model introduced in \cite{MiOel.3} and prove global existence 
in time. This shows that the blow-up scenario observed previously %\cite{MiOel.3} 
does not occur. Since the latter result was interpreted as a rupture of adhesion forces, 
we discuss the possibility of bond breaking both from the analytic and numerical point of view.
%We show some numerical simulations illustrating these results.
\end{abstract}

\end{frontmatter}

\setcounter{tocdepth}{2}
%\tableofcontents

\section{Introduction}

\subsection{Biological and mathematical settings}
Cell migration is an ubiquitous process underlying morphogenesis, wound healing and cancer, among other biological phenomena \cite{bray2001cell}.
Leading-edge protrusion on flat surfaces - the first step in cell crawling -  relies on continuous remodeling of a cytoskeletal structure called the lamellipodium \cite{pmid12600310}, a broad and flat network of actin filaments.

Comprehensive modeling efforts were initiated in 1996 and fall into two groups. The first group includes continuum models for the mechanical behaviour of cytoplasm \cite{pmid10204394, 3a67dc837b5f4ea3b74a3b7f102b2341}.
The second group makes assumptions about the microscopic organization of the actin network \cite{pmid8968574,pmid17477841}. 
In an attempt to create a framework that addresses the interplay of macroscopic features of cell migration and the meshwork structure, the Filament Based Lamellipodium Model has been developed. It is a two-dimensional, two-phase, anisotropic continuum model for the dynamics of the lamellipodium network which retains key directional information on the filamentous substructure of this meshwork
\cite{OeSchVi, MR3385931, MR3675606}. 

The model has been derived from a microscopic description based on the dynamics and interaction of individual filaments \cite{OeSch}, and it has by recent extensions \cite{MR3675606} reached a certain state of maturity. The main unknowns of the model are the positions of the actin filaments in two locally parallel families. 
The filaments are submitted to various forces : bending, twisting, in-extensibility, pressure, stretching and adhesion. These two latter
mechanisms, that stabilize the whole filament network, are at the heart of our project. In \cite{OeSch}, a formal derivation leaded to the expression of these forces as
operators depending on {\em friction} terms in the equations denoted instantaneous cross-link/adhesion turnover.
The dimensionless parameter $\e$ is the ratio between the reference value for the age of adhesions and the maximal life time of a monomer as part of a filament. 
This parameter is assumed to be small and the fact that the elasticity is $O(\e^{-1})$, is a scaling assumption required for a non-vanishing effect of adhesions in the limit $\e \to 0$. Our works construct various tools in order to handle rigorously this asymptotic \cite{MiOel.1,MiOel.2, MiOel.3,mi.proc}. 
In addition, concerning adhesion forces, a similar Ansatz was performed formally in a somehow different mechanical setting in \cite{PreVi,grec:hal-01566770}.

In  previous works we handled a single point adhesion with respect to the space variable. 
Indeed our unknown was the position of a unique point in time $\zeps(t)$.
In \cite{MiOel.1} we gave the first result of convergence based on a special Lyapunov 
functional for the linkages population and a comparison principle generalizing Gronwall's Lemma in the case of integral positive operators. 
In a second step \cite{MiOel.2} we found a new formulation of the problem, weakened some of the hypotheses of the first paper
and gave a fixed point theorem for a fully non-linear version of our new model. 
%The death rate of the population of linkages
%was supposed to be bounded which somehow guarantees a (global in time) lower bound of this population, in \cite{MiOel.3} we suppress this
%hypothesis since in some biological cases \cite{OeSchVi,pmid12547805} it is not verified. 
%This hypothesis forced us to set up a new theoretical approach which allowed 
%to prove local and global existence results for the non-linear model exhibited in \cite{MiOel.2}. 
%It was possible to prove blow-up in finite time in the non-linear case showing detachment of adhesions 
%under certain hypotheses of mechanical load and birth rate of the linkages' population, for a fixed $\e$. 
Here we give a comprehensive 
extension of convergence results in the weakly coupled setting (see below for a precise explanation) 
in the case of space dependent adhesion forces coupled with a second order elliptic 
operator.  %Thus our unknowns $\rhoe$, (resp. $\zeps$) are now functions of $\xat$, (resp. $\bfx,t$).
For the sake of simplicity it is chosen to be the Laplacian, but results presented hereafter could be extended to a broad class of  linear div-grad %$\dive$-$\nabla$ 
operators with  Dirichlet boundary conditions. 
To our knowledge, this is the first extension made in this direction starting from the initial single point model in \cite{MiOel.1}.

In \cite{MiOel.3} we considered a fully non-linear coupling for which the death rate of bonds depends on the positions of adhesions. There, we have shown that there could be a blow-up
in finite time for well-prepared data. Biologically this could be interpreted as tear-off of bonds, a detachment observed in experiences (cf. \cite{PreVi} and references therein). Here the presence of 
another term in the force balance prevents the blow-up, global existence in time is obtained without restrictions
on the data. If moreover $\beps$, the birth rate of the bond population admits a strictly positive lower bound $\beps\geq \bmin>0$, then 
one shows that this population actually never becomes extinct and an asymptotic profile is computed.
We underline that this latter hypothesis is crucial in many of our theoretical results.
In a last step we confront these results with a numerical simulation contradicting this latter hypothesis
and show that detachment can occur on compact sets inside the domain.

\subsection{A detailed mathematical framework}
\newcommand{\cet}{{\cal E}_t}
$\Omega$ denotes an open bounded connected set of $\RR^n$, whose boundary $\partial \Omega$ is $C^{1,1}$ (see for instance Definition 1.2.1.1. \cite{Grisvard}). 
For any fixed time $T$, the parabolic cylinder is denoted $Q_T:= \Omega\times (0,T)$.
The position of the moving binding site, $\zeps(\bfx,t)$,  minimizes at each time $t\geq 0$ an energy functional~:
\begin{equation}\label{eq.minimiz}
 \zeps(\bfx,t) = \argmin_{w \in H^1_0(\Omega)} {\cal E}(w) ,
\end{equation}
the energy being defined for every $w\in H^1_0(\om)$ as 
\begin{equation}\label{eq.nrj}
 \cet(w(\cdot)) :=\nud  \int_{\Omega} \left\{  | \nabla w |^2 + \int_{\rr} \frac{| w(\bfx)- \zeps(\bfx,t-\e a) |^2 }{\e} \rhoe(\bfx,t,a) da \right\} d\bfx \; ,
\end{equation}
the second term is  a delay operator since the minimisation is performed with respect to past positions $\zeps(\bfx,t-\e a)$. 
When $t<0$, these are given by the function $\zeps(\bfx,t)=\zp(\bfx,t)$ for $t<0$. 
The
age distribution $\rhoe=\rhoe(\bfx,a,t)$ is the solution of the structured model~:
\begin{equation}
\label{eq.rho.eps} 
\left\{ 
\begin{aligned} 
&\varepsilon \partial_t \rhoe + \partial_a \rhoe+ \zteps \, \rhoe = 0 
\,, &\bfx \in \Omega, \, a>0 \, , \;t > 0 , \\ %\quad (t,a) \in (\rr)^2\\
& \rhoe(\bfx,a=0,t)=\beps(\bfx,t)\left(1-\muze(\bfx,t)  \right) 
\, , &\bfx \in \Omega,\; a=0,\, t > 0  , \\
& \rhoe(\bfx,a,t=0)=\rhoi(\bfx,a)
\, ,& \bfx \in \Omega, a>0, t=0,
\end{aligned}  
\right. 
\end{equation}
where $\muze(\bfx,t):=\int_0^\infty \rhoe(\bfx,\tia,t) \, d \tia$ and the on-rate of bonds is a given
function $\beps$ times a factor, that takes into account saturation of the moving binding site with linkages.
When the off-rate $\zteps$ is a prescribed function, we say that the problem is weakly coupled~:
first one solves $\rhoe$ and then $\rhoe$ is  the  integral term in \eqref{eq.nrj}  providing $\zeps$.
If instead $\zeta$ is a function depending on $\zeps$, or which is more biologicaly sound (cf. \cite{Suda_2001, Li2003}),  on the 
elongation $\veps(\bfx,a,t):=(\zeps(\bfx,t)-\zeps(\bfx,t-\e a))/\e$ the problem is said to be fully coupled.

Note that the Euler-Lagrange equation associated to the minimization process is a 
Volterra equation of the first kind \cite{MiOel.1} given by
\begin{equation}
\label{eq.z.eps} 
\left\{ 
\begin{aligned} 
&\cL_\e(\zeps,\rhoe)  = \Delta_{\bfx} \zeps \; , \quad & t\geq 0,\; \bfx \in \Omega \; , \\
& \zeps(\bfx,t) = 0 ,& \;t \in \rr  \; ,\; \bfx \in \dom ,\\
& \zeps(\bfx,t)=\zp(\bfx,t) \; , \quad  &t < 0 \; ,\; \bfx \in \Omega ,
\end{aligned}  
\right. 
\end{equation}
where $\cL_\e(\zeps,\rhoe)(\bfx,t) := \ue \int_\rr \left( \zeps(\bfx,t)-\zeps(\bfx,t-\e a)\right)\rhoe(\bfx,a,t) da $.
It is easy to prove that if $\zeps$ solves \eqref{eq.z.eps} in the variational sense for every time $t\geq0$, then it minimizes \eqref{eq.nrj} and vice-versa (cf Appendix \ref{sec.equiv}).% \ref{lem.equiv}).

In contrast to the previous results reported in \cite{MiOel.1,MiOel.2,MiOel.3}, we introduce 
the space dependence through the $\bfx$ variable and through  a partial differential operator on the right hand side of \eqref{eq.z.eps}.
%This linear model helps understanding which mathematical tools are necessary to prove the convergence 
%towards solutions of the singularly perturbured equations proposed in \cite{OeSch,OeSchVi}.
In a first part, we show rigorously, in the semi-coupled case, 
that indeed the solutions of \eqref{eq.rho.eps}-\eqref{eq.z.eps} converge, as $\e$ goes to 0, to the solutions of  the limit 
equations~:
\newcommand{\zz}{{z_0}}
\begin{equation}\label{eq.z.0}
\left\{
 \begin{aligned}
& \muoz(\bfx,t) \dt \zz - \Delta_\bfx \zz =0, & (\bfx,t) \in \Omega \times \rr, \\
&  \zz(\bfx,t)=0 , & (\bfx,t)\in\dom \times \rr,   \\
& \zz(\bfx,0) = \zp(\bfx,0), & (\bfx,t)\in \om\times\{0\}.
\end{aligned}
\right.
\end{equation}
The first equation above is to be understood in the $L^2(\qt)$ sense.
The function $\mu_{k,0} := \int_{\rr} a^k \rhoz (\bfx,a,t) \, da$ represents the moment of order $k$ of $\lim_{\e\to 0} \rhoe =: \rhoz$ which solves 
\begin{equation}\label{eq.rho.zero}
\left\{
 \begin{aligned}
 &  \partial_a \rhoz+ \ztz \, \rhoz = 0 
\,, &\bfx \in \Omega , \;a>0, \; t>0, \\ %\quad (t,a) \in (\rr)^2\\
& \rhoz(\bfx,a=0,t)=\bz(\bfx,t)\left(1-\muzz(\bfx,t)  \right) 
\, , & \bfx \in \Omega , \;a=0, \; t>0.\\
\end{aligned}
\right.
\end{equation}
These convergence results are  essentially due to two new ingredients~: 
\begin{enumerate}[(i)]
 \item  we prove a new energy estimate (see Theorem \ref{thm.nrj})
which states that $\cet(\zeps(\cdot,t)) \leq \cE_0(\zeps(\cdot,0))$  providing a first compactness result.
Since delay terms often induce oscillations in time, 
this key result shows they are controlled by the energy minimized at each step. 
A similar result  is provided when adding a source term $\cS$ in section~\ref{sec.b}. 
\item   considering the elongation variable  introduced in \cite{MiOel.2}, we prove a 
stability result, which is mathematically more involved than in our previous papers (cf Theorem \ref{thm.stab.rho.veps} versus estimates (2.6) p.6 in \cite{MiOel.2}). The main difficulty is caused by the presence of the Laplace operator. Instead in 
the previous articles a given source term $\cS(t)$ (independent on $\zeps$) was prescribed
and greatly simplified these stability estimates.
 This second step provides  a stronger control in time on the delay part of the energy $\cet$ but requires stronger hypotheses 
 on the data as well (see assumptions \ref{hypo.data.trois} $i)b)$).
\end{enumerate}
%We complete these results by showing how to adapt our proofs in the case of a source term $\cS$.

In a second step we consider, for a fixed $\e$, existence and uniqueness of the fully coupled problem
where $\zeta$ is a Lipschitz function of $\veps$, the elongation. 
In \cite{MiOel.3}, this model was 
considered at a single point. 
Here the presence of the space variable greatly complexifies
the mathematical setting. Nevertheless, we prove that there is global existence with no
specific restrictions on the data. 
This result is to be compared with \cite{MiOel.3}, where a blow-up could be shown 
under certain conditions on the data. Instead,  the presence of the Laplace operator
 precludes a singular limit of the delay term $\cL_\e$ for which $\rhoe \to 0$ and $\zeps(\bfx,t)-\zeps(\bfx,t-\e a)$ explodes 
 when the source term becomes too large.
 We show, as well, that if $\beps \geq \bmin >0$, there is no extinction of the total population $\muze$
which demonstrates that however great is  the external load $\cS$, no tear-off occurs
and  new bonds are constantly created at local positions $\zeps(\bfx,t)$.
We show as well that positivity of the elongation is preserved.
As in \cite{MiOel.3}, in the case where $\zeta(u)=1+|u|$, an autonomous equation on the total population of bonds is shown:
$$
\e \dt \muze + (\beps+1) \muze + \Delta \zeps + \cS = \beps, \quad \alev \; (\bfx,t)\in \om\times(0,T),
$$
giving  an asymptotic profile for large times.
Numerical simulations illustrate these latter comments and show two possible regimes
according to whether $\beps$ locally vanishes or not~: if for some $\bfx_0$ and $t>t_0$ $\beps(\bfx_0,t)=0$, 
then $\muze(\bfx_0,t) \to 0$ when $t \to \infty$ which biologically means detachment, or
$\beps(\bfx,t)\to \beta_\infty(\bfx)>0$ and then $\muze \to \beta_\infty(\bfx)/(\beta_\infty(\bfx)+1)$ which represents a steady adhesion.
%A forthcoming work should extend these results to the models introduced in  \cite{OeSch,OeSchVi} where the Laplace operator is  replaced by a forth order elliptic non-linear operator.

In section  \ref{sec.hypo}, we give notations and  hypotheses useful throughout the paper. In section \ref{sec.exist},
we set up for fixed $\e$ the material necessary to guarantee existence, uniqueness and the correct functional 
spaces to which our solutions $(\rhoe,\zeps)$ belong, in a way not necessarily uniform with respect to $\e$.
In section \ref{sec.nrj},  we give a new energy inequality, stating that the energy $\cet$ minimized 
at each time, actually decreases. %This is not  intuitive because of the structure of the delay operator.
Then, in the same section, we provide a stability result already presented in our previous works but adapted 
to this more complicated  framework. 
In section \ref{sec.conv}, we assemble, in Theorem \ref{thm.cvg},  previous results and provide a rigorous proof of the 
convergence of $(\rhoe,\zeps)$  towards the solutions of \eqref{eq.z.0}-\eqref{eq.rho.zero}. 
Section \ref{sec.b} extends previous results when a given source term is added to \eqref{eq.z.eps}. 
In section \ref{sec.full}, we show global existence, uniqueness and positivity, for the fully coupled model. Numerical simulations
illustrate these results in the same section.

%In the appendix \ref{sec.b}, we show 
%how to modify our energy estimate  in order to account for a given source term in \eqref{eq.z.eps}.

\section{Notations and hypotheses}\label{sec.hypo}

\noindent In the rest of the article the subscripts $\bfx$, $a$ or $t$ denote the functional spaces associated with the corresponding variables. For instance $L^\infty_{\bfx,a,t} := L^\infty(\om\times\rr\times(0,T))$ whereas $W^{1,\infty}_tL^2_\bfx := W^{1,\infty}([0,T];L^2(\om))$.%_{\bfx,a,t}

\begin{hypo}\label{hypo.data}
The dimensionless parameter $\varepsilon > 0$ is assumed to induce two families of chemical rate functions $\zteps \in L^\infty_{\bfx,a,t}$ and $\beps \in L^\infty_{\bfx,t}$ that satisfy~:
\begin{enumerate}[(i)]
\item For limit functions $\beta_{0} \in W^{1,\infty}%_{\bfx,t}
(Q_T)$ and
$\zeta_{0} \in W^{1,\infty}(\Omega\times\rr\times[0,T])$ it holds that 
\begin{equation*}
\nrm{ \zteps-\ztz}{L^{\infty}_{\bfx,a,t}} \to 0 \quad \text{and} \quad \nrm{ \beps - \bz}{L^\infty_{\bfx,t}}  \to 0
\; 
\end{equation*}
as ${\varepsilon} \to 0$.
\item We also assume that there are upper and lower bounds such that
\begin{equation*}
0 < \ztmin \leq \zteps(\bfx,a,t) \leq \ztmax \quad \text{and} \quad 
0 < \bmin \leq \beps(\bfx,t) \leq \bmax
\end{equation*}
for all $\varepsilon>0$, $\bfx \in \Omega$, $a \geq 0$ and $t>0$. 
\end{enumerate}
\end{hypo}

\noindent The initial data for the density model \eqref{eq.rho.eps} satisfies 
\begin{hypo}\label{hypo.data.deux}
  The initial condition $\rhoi \in L^\infty%_{\bfx,a}
  (\Omega\times\rr)$ satisfies
\begin{itemize}
\item positivity and boundedness~: there exists $M > \bmax$, s.t. 
$$
M\geq \rhoi(\bfx,a)\geq0 \; ,\quad \text{ a.e. in }  \Omega\times\rr\; ,
$$
moreover, one has also that the total initial population satisfies
$$
0< \int_{\rr} \rhoi(\bfx,a) da < 1% \; 
$$
for almost every $\bfx \in \Omega$.
\item boundedness of higher moments,
$$
0< \mu_{p,I}:=\int_{\rr} a^p \rhoi (\bfx,a) \; da \leq  c_p \; ,\quad \text{for } p\in\{1,2\}   \; ,
$$
where $c_p$ are positive constants depending only on $p$.
\item initial integrability~: %with respect to to the limit problem~:
$$
\int_{\rr} \sup_{\bfx \in \Omega} \left| \rhoi(\bfx,a) \right| a^p da < \infty,\quad \text{for } p\in\{0,1,2\}   \; .
$$
\end{itemize}

\end{hypo}
Concerning the integral equation \eqref{eq.z.eps} we assume
\begin{hypo}\label{hypo.data.trois}
 The past data satisfies~:
\begin{enumerate}[i)]
\item at time $t=0$ we assume that  
\begin{enumerate}[a)]
 \item $\zp(\cdot,0)$ is in $H^1_0(\Omega)$, 
 \item $\Delta \zp(\cdot,0) \in L^1(\Omega)$.
\end{enumerate}
 \item When $t\leq 0$ one assumes furthermore that~:
 $$
 \zp \in C(\RR_-;L^2(\Omega)) ,\quad \dt \zp \in L^\infty(\RR_-,L^2(\Omega)).
 $$
 where $C(\RR_-;L^2(\Omega))$ denotes continuous $L^2$-valued functions endowed with the
 uniform continuity semi-norms.
 The latter hypotheses translate into a Lipschitz constant which is $L^2$ in space~:
 $$
 | \zp(\bfx,t_2) - \zp(\bfx,t_1) | \leq C_{\zp}(\bfx)|t_2-t_1|, \quad \forall  (t_2,t_1) \in (\RR_-)^2\;,
 $$
 where $C_{\zp}(\bfx) \in L^2(\Omega)$.
\end{enumerate}
\end{hypo}

\begin{rem}
 Most of the hypotheses presented here are set for general convenience {\em i.e.} 
 in order to give the broader possible sense to  mathematical results claimed
 hereafter. In the biological context, the data are simply measured
 microscopic constants  (see for instance
 tables given in \cite{MR3675606, OeSch, OeSchVi}).
\end{rem}

\section{Existence and uniqueness results}\label{sec.exist}
\subsection{Extension of previous results for $\rhoe$}

\noindent For the problem solved by $\rhoe$, $\bfx$ is only a mute parameter and the theory 
established in \cite{MiOel.1}, holds for a.e.  $\bfx \in \Omega$. 
\begin{thm}
\label{prop.rho.exist} Let  assumptions~\ref{hypo.data} and \ref{hypo.data.deux} hold, 
then for every fixed $\varepsilon$ there exists a unique solution 
$\rhoe\in C_t(\rr; L^\infty_\bfx(\Omega;L^1_a(\rr))) \cap L^\infty(\Omega\times(\rr)^2)$ of the problem \eqref{eq.rho.eps}. It satisfies 
\eqref{eq.rho.eps} in the sense of characteristics, namely
\begin{equation}
\label{rho_model_by_characteristics}
\rhoe(\bfx,a,t)=\begin{cases}
\beta_\varepsilon(\bfx,t-\varepsilon a)\left(1- \int_{\rr} \rhoe(\bfx,\tilde a,t-\varepsilon a) \, d\tilde a\right)\times  & \\
\quad\quad \quad \quad  \times \exp \left(-\int_0^{a}  \zeta_\varepsilon(\bfx,\tilde a, t- \varepsilon (a-\tilde a))\; d \tilde a  \right) \; ,  & a < t/\varepsilon  \; , \\
\rhoi(\bfx,a-t/\varepsilon) \exp \left(-\frac{1}{\varepsilon} \int_0^{t} \zeta_\varepsilon(\bfx,(\tilde t-t )/\varepsilon + a, \tilde t)  \; d \tilde t   \right)  \; ,  &a \geq t/\varepsilon \; .         
        \end{cases}
\end{equation}
\end{thm}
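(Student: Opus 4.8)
The plan is to treat $\bfx$ as a mere parameter, since the evolution of $\rhoe$ does not couple distinct points of $\om$, to solve the resulting one-dimensional renewal problem in $(a,t)$ for a.e.\ fixed $\bfx$ exactly as in \cite{MiOel.1}, and then to reassemble the $\bfx$-dependence into the stated functional space. For fixed $\bfx$ the first-order operator $\e\partial_t+\partial_a$ has characteristics $\tilde a\mapsto t-\e(a-\tilde a)$ along which $\rhoe$ decays at rate $\zteps$. Tracing the characteristic through $(a,t)$ backwards, it meets the axis $\{a=0\}$ at time $t-\e a$ when $a<t/\e$, and the axis $\{t=0\}$ at age $a-t/\e$ when $a\geq t/\e$; integrating the ordinary differential equation $\tfrac{d}{d\tilde a}\rhoe=-\zteps\rhoe$ along it and inserting the boundary datum $\beps(1-\muze)$, resp.\ the initial datum $\rhoi$, yields exactly \eqref{rho_model_by_characteristics}. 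Thus producing a solution amounts to solving this formula, which is \emph{implicit} because the boundary term involves the unknown total population $\muze(\bfx,t-\e a)=\int_\rr\rhoe(\bfx,\tilde a,t-\e a)\,d\tilde a$ at past times.

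First I would close the nonlocality on the scalar quantity $N(\bfx,t):=\muze(\bfx,t)$. Integrating \eqref{rho_model_by_characteristics} in $a$ and performing the change of variables $s=t-\e a$ on the range $a<t/\e$ turns the boundary contribution into
\begin{equation*}
N(\bfx,t)=F(\bfx,t)+\frac1\e\int_0^t \beps(\bfx,s)\bigl(1-N(\bfx,s)\bigr)\,G(\bfx,s,t)\,ds,
\end{equation*}
where $F(\bfx,t)=\int_{t/\e}^\infty\rhoi(\bfx,a-t/\e)\exp(\cdots)\,da$ collects the initial part and $G(\bfx,s,t)=\exp\bigl(-\int_0^{(t-s)/\e}\zteps(\bfx,\tilde a,s+\e\tilde a)\,d\tilde a\bigr)$. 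Since $\zteps\geq\ztmin>0$ one has $0\le G\le e^{-\ztmin(t-s)/\e}$, so the kernel $\e^{-1}\beps G$ is integrable with $\e^{-1}\int_0^t\beps G\,ds\leq \bmax/\ztmin$, while $0\le F(\bfx,t)\le\int_\rr\rhoi\,da<1$. This is a \emph{linear} Volterra equation of the second kind (the off-rate $\zteps$ being prescribed in the semi-coupled setting), so the map $N\mapsto(\text{r.h.s.})$ is a contraction on $C([0,\tau])$ for $\tau$ small — its Lipschitz constant being $\tfrac{\bmax}{\ztmin}\bigl(1-e^{-\ztmin\tau/\e}\bigr)<1$ — with a constant uniform in $\bfx$ and in the left endpoint of the interval; a finite continuation over $[0,T]$ gives a unique $N(\bfx,\cdot)\in C([0,T])$. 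Reinserting $N$ into \eqref{rho_model_by_characteristics} defines $\rhoe$, and integrating the reconstructed $\rhoe$ in $a$ returns $N$, so the identity $\int_\rr\rhoe\,da=\muze$ closes and $\rhoe$ solves \eqref{eq.rho.eps} in the sense of characteristics.

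The a priori bounds follow as in \cite{MiOel.1}. The pointwise estimate $0\le\rhoe\le M$ is read off the formula once $N\le1$, since the two branches are bounded by $\bmax$ and $\|\rhoi\|_\infty\le M$ respectively (recall $M>\bmax$). To obtain $0\le N<1$ I would run a barrier/bootstrap argument on the balance identity produced by integrating \eqref{eq.rho.eps} in age, $\e\,\dt N=\beps(1-N)-\int_\rr\zteps\rhoe\,da$: on any interval where $N\le1$ the formula gives $\rhoe\ge0$, whence $\int_\rr\zteps\rhoe\,da\ge0$ and $\e\,\dt N+\beps N\leq\beps$; Grönwall then yields $N(t)\le 1-(1-N(0))\,e^{-\e^{-1}\int_0^t\beps}<1$ because $N(0)=\int_\rr\rhoi\,da<1$, and a continuity argument propagates the strict bound to all of $[0,T]$.

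The genuinely new point, absent in the single-point setting of \cite{MiOel.1}, is to upgrade these $\bfx$-pointwise statements to the joint regularity $\rhoe\in C_t(\rr;L^\infty_\bfx(\om;L^1_a(\rr)))\cap L^\infty(\om\times(\rr)^2)$. Joint measurability in $(\bfx,a,t)$ follows because $N$ is obtained by successive approximation from the measurable data $\beps,\zteps,\rhoi$, and all the constants above ($\ztmin,\ztmax,\bmin,\bmax,M$) are uniform in $\bfx$ by Assumptions \ref{hypo.data}--\ref{hypo.data.deux}, so the Volterra estimates are $\bfx$-uniform and $t\mapsto N(\cdot,t)$ is continuous into $L^\infty_\bfx$. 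I expect the main obstacle to be the continuity in time of $t\mapsto\rhoe(\cdot,\cdot,t)$ in the $L^\infty_\bfx(L^1_a)$ norm: since $\beps$ and $\zteps$ are merely $L^\infty$ (not continuous) in $t$, one cannot hope for continuity pointwise in $a$, and it survives only after integration in age. Writing $\int_\rr|\rhoe(\bfx,a,t_2)-\rhoe(\bfx,a,t_1)|\,da$ and passing again to the variable $s=t-\e a$, the troublesome coefficients appear only under the $s$-integral against the kernel $G$, whose $t$-dependence is Lipschitz; dominated convergence, uniform in $\bfx$ thanks to the integrability tail control in Assumption \ref{hypo.data.deux}, then yields the required modulus of continuity, which would close the argument.
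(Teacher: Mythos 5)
Your core construction --- treating $\bfx$ as a parameter, closing the nonlocality on the total population via a linear Volterra equation of the second kind, the contraction with constant $\frac{\bmax}{\ztmin}\bigl(1-e^{-\ztmin\tau/\e}\bigr)$, continuation, and the bootstrap giving $0\le\muze<1$ and hence $0\le\rhoe\le M$ --- is exactly the paper's route: the paper gives no self-contained proof, only the remark preceding the theorem that $\bfx$ is a mute parameter so that the theory of \cite{MiOel.1} applies for a.e.\ $\bfx\in\om$, and that theory is precisely the fixed-point argument you reconstruct. Up to and including the a priori bounds your proposal is correct, and your observation that all the constants are $\bfx$-uniform is right.

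The genuine gap is in your final paragraph, i.e.\ exactly in the step that goes beyond \cite{MiOel.1}. Dominated convergence yields continuity of $t\mapsto\rhoe(x,\cdot,t)$ in $L^1_a$ for each \emph{fixed} $x$ (it reduces to $L^1$-translation continuity of $\beps(x,\cdot)$, $\zteps(x,\cdot,\cdot)$ and $\rhoi(x,\cdot)$), but it cannot make this modulus uniform in $x$: translation continuity in $L^1$ is not uniform over a family that is merely bounded in $L^\infty$, and the tail control in Assumption \ref{hypo.data.deux} supplies a dominating function, not equicontinuity. Concretely, on $\om=(0,1)$ take $\rhoi(x,a)=\frac14\,(2+\sin(a/x))\,e^{-a}$, which satisfies every item of Assumption \ref{hypo.data.deux}; choosing $x\sim h/\pi$ gives $\int_\rr|\rhoi(x,a-h)-\rhoi(x,a)|\,da\ge c>0$ for all small $h>0$, and since on the branch $a\ge t/\e$ of \eqref{rho_model_by_characteristics} the exponential factor is $1+O(h)$ at time $t=\e h$, one gets $\sup_{x}\int_\rr|\rhoe(x,a,\e h)-\rhoe(x,a,0)|\,da\not\to 0$ as $h\to0$, so continuity into $L^\infty_\bfx(L^1_a)$ fails at $t=0$; the analogous choice $\beps(x,t)=\frac14(3+\sin(t/x))$ breaks it at positive times through the boundary branch. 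Thus ``dominated convergence, uniform in $\bfx$'' is not a valid mechanism. What your argument actually proves --- and all the paper itself asserts, given its one-line justification --- is continuity in time for a.e.\ fixed $\bfx$ together with $\bfx$-uniform bounds, the space $C_t(\rr;L^\infty_\bfx(\om;L^1_a(\rr)))$ being read in that loose sense; to get the literal uniform-space statement you would need extra hypotheses, e.g.\ a uniform-in-$x$ time modulus for $\beps,\zteps$ and uniform $L^1_a$-translation continuity of $\rhoi$.
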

  
\noindent We recall the Lemma 2.1 \cite{MiOel.1} that we adapt here adding the $\bfx$ contribution~:
\begin{lem}\label{lemxyz} 
Let $\rhoe$ be the unique solution of problem \eqref{eq.rho.eps} according to Theorem~\ref{prop.rho.exist}, then it satisfies a weak formulation %of this problem, namely
\begin{multline}
\label{equ_rho_weak}
\int_{Q_T\times \rr} \rhoe(\bfx,a,t)  \left( \e \dt \varphi+\da \varphi+ \zteps \varphi\right) \; d \bfx \; dt \; da \;  - \e \int_{\Omega\times\rr} \rhoe(\bfx,a,t) \varphi(a,t=T) \; da \, d \bfx \; + \\
 + \int_{Q_T}  \rhoe(\bfx,a=0,t) \, \varphi (\bfx,0,t) \; dt \, d\bfx + \e \int_{\Omega\times \rr} \rhoi(\bfx,a) \varphi(\bfx,a,t=0) \; da = 0 \; , 
\end{multline}
for every $T>0$ and every test function $\varphi \in C^\infty(Q_T\times\rr) \cap L^\infty(Q_T\times\rr)$.
\end{lem}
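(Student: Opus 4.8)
The plan is to derive \eqref{equ_rho_weak} from the strong formulation \eqref{eq.rho.eps} by the standard multiply-and-integrate-by-parts procedure, exploiting the fact that $\bfx$ enters \eqref{eq.rho.eps} only as a parameter. Thus I fix a.e. $\bfx\in\Omega$, treat $\rhoe(\bfx,\cdot,\cdot)$ as the solution of the one-dimensional renewal equation in $(a,t)$ exactly as in Lemma 2.1 of \cite{MiOel.1}, establish the pointwise-in-$\bfx$ identity, and integrate it over $\Omega$ at the very end; since no spatial derivative appears in \eqref{equ_rho_weak}, this last step is harmless. Concretely, I multiply the transport equation $\e\dt\rhoe + \da\rhoe + \zteps\rhoe = 0$ by a test function $\varphi\in C^\infty(Q_T\times\rr)\cap L^\infty(Q_T\times\rr)$ and integrate over $Q_T\times\rr$.

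Next I integrate by parts to transfer the derivatives onto $\varphi$. Integrating the $\e\dt\rhoe$ term by parts in $t$ over $(0,T)$ produces the volume term in $\rhoe\,\dt\varphi$ together with two temporal boundary contributions: one at $t=T$, and one at $t=0$ which, by the initial condition $\rhoe(\bfx,a,0)=\rhoi(\bfx,a)$, becomes the $\rhoi$-term. Integrating the $\da\rhoe$ term by parts in $a$ over $\rr$ produces the term in $\rhoe\,\da\varphi$ plus the boundary contribution at $a=0$, which by the birth condition equals $\rhoe(\bfx,a=0,t)\,\varphi(\bfx,0,t)$, together with a contribution at $a=\infty$ that must vanish. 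Collecting these boundary terms with the undifferentiated $\zteps\rhoe\varphi$ term yields exactly \eqref{equ_rho_weak}.

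Two points require care. The vanishing of the $a\to\infty$ boundary term is the delicate one: since $\varphi$ is only bounded (not decaying) in $a$, one needs $\rhoe(\bfx,a,t)\to 0$ as $a\to\infty$. This is where Hypothesis \ref{hypo.data.deux} enters: the explicit representation \eqref{rho_model_by_characteristics} shows that for $a\geq t/\e$ the solution equals $\rhoi(\bfx,a-t/\e)$ times an exponential bounded by $1$, so the integrability $\int_\rr\sup_\bfx|\rhoi(\bfx,a)|\,da<\infty$ together with $\zteps\geq\ztmin>0$ forces this decay and also guarantees that every integral in \eqref{equ_rho_weak} is finite. The second point is justifying the integration by parts itself, since $\rhoe$ is merely in $C_t(\rr;L^\infty_\bfx(L^1_a))\cap L^\infty$ and, through \eqref{rho_model_by_characteristics}, generically carries a jump across the characteristic line $a=t/\e$; I expect this to be the main technical obstacle. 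I would resolve it by regularising the data $(\rhoi,\beps,\zteps)$ into smooth, compatible approximations, proving the identity by classical integration by parts for the resulting $C^1$ solution, and passing to the limit by dominated convergence, using the uniform $L^\infty$ and $L^1_a$ bounds of Theorem \ref{prop.rho.exist} and the continuous dependence of the characteristic formula on the data. Alternatively one argues directly in the distributional sense: because the jump of $\rhoe$ propagates exactly along the characteristic $a=t/\e$, the conormal $(\e\dt+\da)$-flux across that line is continuous and its contribution cancels, so the formal computation is in fact legitimate.
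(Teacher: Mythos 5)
Your route is the intended one: the paper itself gives no proof of this lemma --- it is recalled from Lemma~2.1 of \cite{MiOel.1} ``adding the $\bfx$ contribution'' --- and your plan (fix a.e.\ $\bfx$, multiply by $\varphi$, integrate by parts in $t$ and $a$, kill the $a\to\infty$ boundary term via the characteristics formula \eqref{rho_model_by_characteristics} and the integrability part of Assumptions~\ref{hypo.data.deux}, and justify the integration by parts across the jump line $a=t/\e$ by regularisation or by noting that the discontinuity is transported along a characteristic, so it carries no Rankine--Hugoniot contribution) is exactly the standard derivation behind the cited lemma. The two delicate points you single out are the right ones, with one small looseness: integrability of $a\mapsto\sup_\bfx\rhoi(\bfx,a)$ does not force pointwise decay of $\rhoe$ in $a$; what actually vanishes is the time-integrated boundary term at $a=R$, since for $R>T/\e$ it is bounded by $\e\,|\Omega|\,\nrm{\varphi}{L^\infty}\int_{R-T/\e}^{R}\sup_\bfx\rhoi(\bfx,s)\,ds\to0$, a tail of an $L^1$ function --- which is all the argument needs.

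One genuine correction to your write-up: the computation does \emph{not} yield \eqref{equ_rho_weak} ``exactly'' as printed. Transferring the derivatives onto $\varphi$ turns $\e\dt+\da+\zteps$ into $-\e\dt-\da+\zteps$, so after the overall sign flip the volume term must read $\rhoe\left(\e\dt\varphi+\da\varphi-\zteps\varphi\right)$, with a \emph{minus} sign on $\zteps\varphi$; the plus sign in \eqref{equ_rho_weak} is a typo introduced in adapting the cited lemma. You can check this independently: for $\varphi$ compactly supported in $Q_T\times(0,\infty)$ the formula as printed would force $\e\dt\rhoe+\da\rhoe-\zteps\rhoe=0$ in the sense of distributions, contradicting \eqref{eq.rho.eps}; and taking $\varphi\equiv1$, only the corrected sign reproduces the moment identity $\e\dt\muze=\beps(1-\muze)-\int_{\rr}\zteps\rhoe\,da$ that the paper itself uses in the proof of Theorem~\ref{thm.dtz.non.unif}. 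So your argument is sound, but it proves the statement with $-\zteps\varphi$, and you should have flagged that discrepancy rather than asserting exact agreement with the displayed formula.
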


\begin{lem}
\label{prop.rho.bounds} 
Let assumptions \ref{hypo.data} and \ref{hypo.data.deux} hold, 
then the unique solution 
$\rhoe\in C(\rr;$ $L^\infty(\Omega;L^1(\rr))) \cap L^\infty\left(\Omega\times \left(\rr\right)^2\right)$ of the problem \eqref{eq.rho.eps} from
Theorem~\ref{prop.rho.exist} satisfies
$$\rhoe(\bfx,a,t) \geq 0 \quad \text{a.e. in} \quad \Omega\times \rr^2 \quad \text{and}$$
\begin{equation}
\label{def_mu_min} 
\muzm \leq  \mueps(\bfx,t) <  1 \; , \quad \forall t \in \rr 
\quad \text{where} \quad
\muzm := \min \left( \mueps(0), \frac{\bmin}{\bmin + \ztmax} \right)
\; .
\end{equation}
\end{lem}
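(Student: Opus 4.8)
The plan is to reduce everything to a scalar balance law for the total population $\mueps(\bfx,t)=\int_\rr\rhoe(\bfx,a,t)\,da$, treating $\bfx$ as the mute parameter of Theorem~\ref{prop.rho.exist}. Integrating the structured equation \eqref{eq.rho.eps} in $a$ over $\rr$, using that $\rhoe(\bfx,\cdot,t)\in L^1_a$ forces $\rhoe(\bfx,a,t)\to0$ as $a\to\infty$ (on the first branch of \eqref{rho_model_by_characteristics} the factor $e^{-\int_0^a\zteps}\le e^{-\ztmin a}$ decays, and on the second branch one has the integrable tail of $\rhoi$ times $e^{-\ztmin t/\e}$), together with $\rhoe(\bfx,a=0,t)=\beps(1-\mueps)$, I would obtain the renewal balance $$\e\,\dt\mueps(\bfx,t)=\beps(\bfx,t)\bigl(1-\mueps(\bfx,t)\bigr)-\int_\rr\zteps(\bfx,a,t)\,\rhoe(\bfx,a,t)\,da,$$ to be read for a.e.\ $\bfx$ and a.e.\ $t$. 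I would make this rigorous by inserting an $a$-independent test function $\varphi=\varphi(t)$ (or an approximation of one) into the weak formulation \eqref{equ_rho_weak} of Lemma~\ref{lemxyz}, which yields the above as a distributional identity in $t$; the regularity $\rhoe\in C_t$ then upgrades it to an absolutely continuous one. Equivalently, integrating \eqref{rho_model_by_characteristics} in $a$ recasts it as a second-kind Volterra equation with positive kernel, to which the comparison principle of \cite{MiOel.1} applies.

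Positivity and the strict upper bound are coupled and must be handled together: positivity of $\rhoe$ on the first branch of \eqref{rho_model_by_characteristics} needs $\mueps\le1$ at the earlier time $t-\e a$, whereas the bound $\mueps<1$ will itself use $\rhoe\ge0$. I would break this circularity by a continuation argument. Set $$T^\star:=\sup\bigl\{\tau\ge0:\ \mueps(\bfx,t)\le1\ \text{for a.e. }\bfx,\ \forall t\in[0,\tau)\bigr\}.$$ Since $\mueps(\bfx,0)=\int_\rr\rhoi\,da<1$ by Assumption~\ref{hypo.data.deux}, continuity in $t$ gives $T^\star>0$. On $[0,T^\star)$ both branches of \eqref{rho_model_by_characteristics} are nonnegative (the second because $\rhoi\ge0$, the first because $\beps\ge0$ and $1-\mueps(\cdot,t-\e a)\ge0$), so $\rhoe\ge0$ and $\int_\rr\zteps\rhoe\,da\ge\ztmin\mueps$ there. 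The balance law then gives $\e\,\dt\mueps\le\beps(1-\mueps)-\ztmin\mueps$, so on $\{\mueps=1\}$ one has $\e\,\dt\mueps\le-\ztmin<0$: the vector field points strictly inward, the region $\{\mueps<1\}$ is forward invariant, and $\mueps$ cannot attain the value $1$. Hence $T^\star=+\infty$, which yields $\rhoe\ge0$ a.e.\ in $\om\times\rr^2$ and $\mueps<1$ for all $t$.

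For the lower bound, with $\rhoe\ge0$ and $\mueps\in[0,1]$ now available for all $t$, I would estimate the loss term from above by $\int_\rr\zteps\rhoe\,da\le\ztmax\mueps$ and the gain term from below by $\beps(1-\mueps)\ge\bmin(1-\mueps)$, obtaining $$\e\,\dt\mueps\ \ge\ \bmin-(\bmin+\ztmax)\,\mueps.$$ The scalar map $m\mapsto\bmin-(\bmin+\ztmax)m$ vanishes at $m_\star:=\bmin/(\bmin+\ztmax)$, is positive below and negative above it; thus $\{\mueps\ge m_\star\}$ is forward invariant, while if $\mueps(\bfx,0)<m_\star$ then $\dt\mueps>0$ as long as $\mueps<m_\star$, so $\mueps$ is nondecreasing and stays above its initial value. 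In either case $\mueps(\bfx,t)\ge\min(\mueps(\bfx,0),m_\star)=\muzm$, which is exactly \eqref{def_mu_min}. This is the same conclusion as a Gronwall-type comparison with the constant-coefficient equation $\e\dot{\underline m}=\bmin-(\bmin+\ztmax)\underline m$, $\underline m(0)=\mueps(0)$, whose solution is monotone toward $m_\star$.

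The main obstacle is precisely the intertwining of positivity with the strict upper bound, which is why the continuation argument breaking the circularity is the conceptual heart of the proof; once the scalar balance law is secured, the two invariant-region comparisons are routine. Two technical points deserve care: justifying the vanishing boundary term at $a=\infty$ and the passage from \eqref{equ_rho_weak} to the differential identity (both controlled by the explicit decay in \eqref{rho_model_by_characteristics} and the moment bounds of Assumption~\ref{hypo.data.deux}); and the fact that all inequalities hold only for a.e.\ $\bfx$, so that $T^\star$ must be defined through an a.e.-in-$\bfx$ statement and the invariance arguments run at the level of the scalar equation for each such fixed $\bfx$.
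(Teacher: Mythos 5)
Your proof is correct and follows essentially the route the paper itself relies on: the paper gives no in-text proof of this lemma, treating $\bfx$ as a mute parameter and deferring to the theory of \cite{MiOel.1}, whose argument is precisely what you reconstruct --- integrate \eqref{eq.rho.eps} in age to obtain the balance law $\e\dt\mueps=\beps(1-\mueps)-\int_\rr\zteps\rhoe\,da$, get positivity from \eqref{rho_model_by_characteristics} via a continuation argument breaking the circularity with $\mueps\le 1$, and conclude both bounds by scalar comparison/invariant regions. The one point to tidy is the one you flag yourself: the continuation time should be defined pointwise, $T^\star=T^\star(\bfx)$ for a.e.\ fixed $\bfx$ (the renewal problem fully decouples in $\bfx$), since with your a.e.-in-$\bfx$ definition the step ``$T^\star<\infty$ contradicts maximality'' would require a margin below $1$ uniform in $\bfx$, which Assumption \ref{hypo.data.deux} does not provide.
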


\begin{lem}\label{lem.rhoz.est}
 Under the hypotheses on $\ztz$ and $\bz$ in assumptions \ref{hypo.data}, one has~: 
$$
 \rhoz(\bfx,a,t)  \leq C  \exp( - \ztmin a), \quad %\\
  | \dt \rhoz (\bfx,a,t)|  \leq  C (1+a) \exp(-\ztmin a) %\\
$$
where the generic constants depend only on $(\bmax$, $\bmin$, $|\dt \bz|_\infty$, $\ztmax$, $\ztmin$, $|\dt \ztz|_\infty)$.
\end{lem}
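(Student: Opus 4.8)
The plan is to treat the limit problem \eqref{eq.rho.zero} as a linear first-order ODE in the age variable $a$, in which $\bfx$ and $t$ enter only as parameters, and to solve it explicitly. Writing the survival factor $A(\bfx,a,t) := \exp\left(-\int_0^a \ztz(\bfx,s,t)\,ds\right)$ and the birth value $b(\bfx,t) := \bz(\bfx,t)\bigl(1-\muzz(\bfx,t)\bigr)$, integration of the transport equation gives $\rhoz(\bfx,a,t) = b(\bfx,t)\,A(\bfx,a,t)$. Since $\ztz \geq \ztmin > 0$ by assumptions \ref{hypo.data}, one immediately has $0 < A \leq \exp(-\ztmin a)$, so the first estimate will follow as soon as $b$ is bounded.

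To bound $b$ I would close the relation defining it. Setting $\bar A(\bfx,t) := \int_\rr A\,da$ and integrating $\rhoz = bA$ in $a$ yields $\muzz = b\,\bar A$; substituting into $b = \bz(1-\muzz)$ gives the algebraic identity $b = \bz/(1+\bz\bar A)$. Because $\ztmin \leq \ztz \leq \ztmax$, the mean survival time is pinched, $1/\ztmax \leq \bar A \leq 1/\ztmin$, and therefore $0 < b \leq \bz \leq \bmax$. This also delivers positivity of $\rhoz$ and the bound $\muzz = \bz\bar A/(1+\bz\bar A) < 1$ as byproducts. Combining with the exponential bound on $A$ proves $\rhoz \leq \bmax\exp(-\ztmin a) \leq C\exp(-\ztmin a)$.

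For the derivative estimate I would differentiate $\rhoz = bA$ in time, $\dt\rhoz = (\dt b)\,A + b\,(\dt A)$, and control the two factors separately. Differentiating the exponential produces the crucial linear growth in $a$: $\dt A = -A\int_0^a \dt\ztz\,ds$, hence $|\dt A| \leq a\,\dtztmax\,\exp(-\ztmin a)$. The term $\dt b$ is where the genuine difficulty lies, since naively differentiating $b = \bz(1-\muzz)$ reintroduces $\dt\muzz = \int_\rr \dt\rhoz\,da$ and thus couples the estimate back to the unknown $\dt\rhoz$; the point is to differentiate instead the \emph{closed} formula $b = \bz/(1+\bz\bar A)$, which expresses $b$ through $\bz$ and $\bar A$ alone. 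One then only needs $|\dt\bar A| = \left|\int_\rr \dt A\,da\right| \leq \dtztmax\int_\rr a\exp(-\ztmin a)\,da = \dtztmax/\ztmin^2$, together with the lower bound $1+\bz\bar A \geq 1$ and $\dtbmax$, to conclude that $\dt b$ is bounded by a constant depending only on $\bmax,\bmin,\dtbmax,\ztmax,\ztmin,\dtztmax$. Assembling the two contributions gives $|\dt\rhoz| \leq C\exp(-\ztmin a) + \bmax\, a\,\dtztmax\exp(-\ztmin a) \leq C(1+a)\exp(-\ztmin a)$, as claimed. The main obstacle throughout is thus the self-referential appearance of $\muzz$ (and $\dt\muzz$) in the boundary data, which the explicit solve-and-substitute step resolves once and for all.
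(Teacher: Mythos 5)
Your proposal is correct and takes essentially the same route as the paper: its proof (``One solves \eqref{eq.rho.zero}'') is precisely your solve-and-substitute computation, where the explicit solution $\rhoz = b\,A$ with $b = \bz(1-\muzz)$ is closed via $\muzz = b\,\bar{A}$ into $b = \bz/(1+\bz\bar{A})$, and the pinching $1/\ztmax \leq \bar{A} \leq 1/\ztmin$ yields exactly the paper's constants such as $\bmax\ztmax/(\ztmax+\bmin)$, while the derivative bound combines $|\dt A| \leq a\,\dtztmax\,e^{-\ztmin a}$ with a bound on $\dt b$. The only cosmetic difference is that the paper estimates $\dt b$ by differentiating $b = \bz(1-\muzz)$ with $\muzz$ expressed through the closed formula (producing its $\dtbmax/\ztmin$ term), whereas you differentiate the closed quotient $\bz/(1+\bz\bar{A})$ directly --- the same argument up to the value of the constant.
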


\begin{proof}
One solves \eqref{eq.rho.zero}
 $$
\begin{aligned}
 \rhoz(\bfx,a,t)  \leq &\frac{\bmax \ztmax}{\ztmax + \bmin} \exp( - \ztmin a), \\
  | \dt \rhoz (\bfx,a,t)|  \leq & \left( \frac{\dtbmax\ztmax }{\ztmax +\bmin} + \frac{ \bmax \ztmax^2 }{(\ztmax + \bmin)^2} \left(\frac{ \dtbmax }{\ztmin} + \bmax \frac{\dtztmax}{\ztmin^2} \right) \right) \exp(-\ztmin a) +\\
  &\quad  + \frac{\bmax \ztmax}{\ztmax + \bmin} \dtztmax a \exp( - \ztmin a)
  \end{aligned}
$$

\end{proof}

\subsection{Characterizing $\zeps$, the solution of \eqref{eq.z.eps}}

We define the space where $\zeps$ shall evolve setting
$
X_T := L^\infty((0,T);H^1_0(\Omega))
$
for every positive real $T$.% possibly infinite.
\begin{defi}\label{def.weak.sol.z.eps}
We say that $\zeps$ solves \eqref{eq.z.eps} in the  weak sense for  $\e$ fixed, if $\zeps \in X_T$ and if it solves the problem~: 
\begin{equation}\label{eq.weak.eps}
\int_{\om} %\hspace{-0.5cm}
\cL_\e(\zeps,\rhoe) 
\varphi(\bfx) \, d \bfx  
+ \int_{\om} 
\nabla \zeps (\bfx,t)\cdot \nabla \varphi (\bfx) d \bfx  = 0 %\quad \forall t \in \rr
\end{equation}
for almost all $t\geq0$ and for every $\varphi \in H^1_0(\Omega)$.
\end{defi}

We need a preliminary lemma in order to show that %under hypotheses \ref{hypo.data.trois},
our data is well prepared for the existence result.
\begin{lem}\label{lem.ci}
 Under hypotheses \ref{hypo.data.deux} and \ref{hypo.data.trois}, 
 $$
I(\bfx):= \int_{\rr} | \zp(\bfx,-\e a) | \rhoi(\bfx,a) da \in L^2(\Omega).
 $$
\end{lem}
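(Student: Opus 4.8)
The plan is to bound $I(\bfx)$ pointwise in $\bfx$ by a sum of two functions that are manifestly in $L^2(\Omega)$, exploiting the time-Lipschitz regularity of the past data together with the first-moment bound on $\rhoi$. First I would use the Lipschitz-in-time estimate from hypothesis \ref{hypo.data.trois} $ii)$ to write, for every $a\geq 0$ and a.e. $\bfx$,
$$
|\zp(\bfx,-\e a)| \leq |\zp(\bfx,0)| + |\zp(\bfx,-\e a)-\zp(\bfx,0)| \leq |\zp(\bfx,0)| + \e\, a\, C_{\zp}(\bfx),
$$
which separates the $a$-dependence into a part constant in $a$ and a part linear in $a$.

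Next, inserting this into the definition of $I$ and using the positivity of $\rhoi$ from hypothesis \ref{hypo.data.deux}, I would integrate in the age variable to obtain
$$
I(\bfx) \leq |\zp(\bfx,0)| \int_{\rr} \rhoi(\bfx,a)\, da + \e\, C_{\zp}(\bfx) \int_{\rr} a\, \rhoi(\bfx,a)\, da.
$$
The two $a$-integrals are controlled uniformly in $\bfx$ by hypothesis \ref{hypo.data.deux}: the total population obeys $\int_{\rr} \rhoi(\bfx,a)\, da < 1$, and the first moment obeys $\int_{\rr} a\,\rhoi(\bfx,a)\, da = \mu_{1,I}(\bfx) \leq c_1$. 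Hence
$$
I(\bfx) \leq |\zp(\bfx,0)| + \e\, c_1\, C_{\zp}(\bfx) \quad \text{for a.e. } \bfx \in \Omega.
$$

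Finally I would conclude by the triangle inequality in $L^2(\Omega)$: the first term is in $L^2$ since $\zp(\cdot,0)\in H^1_0(\Omega)\subset L^2(\Omega)$ by hypothesis \ref{hypo.data.trois} $i)a)$, and the second is in $L^2$ since $C_{\zp}\in L^2(\Omega)$ by hypothesis \ref{hypo.data.trois} $ii)$, so that $I\in L^2(\Omega)$. No genuine obstacle arises in this argument; the only delicate point is that the moment bounds on $\rhoi$ be \emph{uniform} in $\bfx$, so that integrating out the age variable produces an honest pointwise-in-$\bfx$ bound by $L^2$ functions rather than merely a finite integral for each fixed $\bfx$. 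This uniformity is exactly what the higher-moment part of hypothesis \ref{hypo.data.deux} supplies, and it is also why the $L^2$-valued Lipschitz constant $C_{\zp}$ was introduced in the statement of hypothesis \ref{hypo.data.trois}.
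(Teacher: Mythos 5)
Your proof is correct, and it uses the same basic decomposition as the paper --- splitting the past position $\zp(\bfx,-\e a)$ into $\zp(\bfx,0)$ plus a Lipschitz increment bounded by $\e\, a\, C_{\zp}(\bfx)$ --- but the technical route is genuinely different. The paper first applies Jensen's inequality with respect to the measure $\rhoi\,da$, obtaining $I(\bfx)^2 \leq \mu_{0,I}\int_{\rr}|\zp(\bfx,-\e a)|^2\rhoi(\bfx,a)\,da$, and only then splits the square; as a consequence it must invoke the \emph{second} moment bound $\sup_{\bfx}\int_{\rr}a^2\rhoi(\bfx,a)\,da$ from Assumptions \ref{hypo.data.deux}. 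You instead stay at the level of pointwise estimates, integrate the linear-in-$a$ bound directly (which needs only the \emph{first} moment $\mu_{1,I}\leq c_1$), and conclude with the triangle inequality in $L^2(\Omega)$. Your version is more elementary, is slightly more economical in its use of the moment hypotheses, and produces an explicit pointwise majorant $I(\bfx)\leq |\zp(\bfx,0)|+\e\,c_1\,C_{\zp}(\bfx)$. What the paper's Jensen-first computation buys is reusability: the same $L^1_a$-to-$L^2_a$ device with $\rhoi\,da$ as reference measure recurs throughout (e.g.\ in Theorems \ref{thm.dtz.non.unif}, \ref{thm.dt.zeps.l2} and \ref{thm.stab.rho.veps}), and in Theorem \ref{thm.nrj} the bound on the initial energy $\cE_0(\zp(\cdot,0))$ is exactly the squared estimate from this lemma's proof, which is why the paper records it in that form. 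Your closing remark about uniformity in $\bfx$ of the moment bounds correctly identifies the point where a naive ``finite for each $\bfx$'' argument would fail.
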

\begin{proof}
 Using Jensen's inequality, one has
 $$
\begin{aligned}
 \int_{\Omega}  & \left\{ \int_{\rr} |\zp(\bfx,-\e a)|  \rhoi(\bfx,a) da \right\}^2 d\bfx \leq \int_{\Omega} \mu_{0,I}  \int_{\rr} |\zp(\bfx,-\e a)|^2  \rhoi(\bfx,a) da  d\bfx \\
 & \leq 2 \left( \int_{\Omega}  \int_{\rr} |\zp(\bfx,0)-\zp(\bfx,-\e a)|^2  \rhoi(\bfx,a) da  d\bfx + \int_{\Omega}   \int_{\rr} |\zp(\bfx,0)|^2  \rhoi(\bfx,a) da  d\bfx \right) \\
 & \leq 2 \left\{ \e^2 \left( \int_{\Omega} C_{\zp}^2  (\bfx) d \bfx \right) \; \left(\sup_{\bfx \in \Omega} \int_{\rr} \rhoi(\bfx,a) a^2 da \right)+ \nrm{\zp(\cdot,0)}{L^2(\Omega)}^2 \right\} \leq C
\end{aligned}
$$
which ends the proof.
\end{proof}
\begin{thm}
Under hypotheses \ref{hypo.data}, \ref{hypo.data.deux} and \ref{hypo.data.trois}, 
 there exists a unique weak solution $\zeps \in X_T$, $T$ being possibly infinite.
\end{thm}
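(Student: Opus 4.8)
The plan is to treat the weak problem \eqref{eq.weak.eps}, for each fixed $\e$, as a linear second-order elliptic problem in the space variable that carries a Volterra (memory) source in time, and to solve it by a fixed-point argument. First I would rewrite the delay operator using $\muze(\bfx,t)=\int_\rr\rhoe\,da$: since $\cL_\e(\zeps,\rhoe)=\frac1\e\muze\,\zeps-\frac1\e\int_\rr\zeps(\bfx,t-\e a)\rhoe\,da$, multiplying \eqref{eq.weak.eps} by $\e$ shows that it is equivalent to requiring, for a.e.\ $t$, that $\zeps(\cdot,t)\in H^1_0(\om)$ solve
\[
-\e\,\Delta_\bfx\zeps(\cdot,t)+\muze(\cdot,t)\,\zeps(\cdot,t)=\int_\rr \zeps(\bfx,t-\e a)\,\rhoe(\bfx,a,t)\,da
\]
in the variational sense. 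The right-hand side splits naturally at $a=t/\e$ into a \emph{recent-past} part ($a<t/\e$, involving the unknown $\zeps$ on $[0,t)$) and an \emph{ancient-past} part ($a\geq t/\e$, involving only the data $\zp$). Using the representation \eqref{rho_model_by_characteristics} and the substitution $b=a-t/\e$, the ancient part equals $\int_\rr \zp(\bfx,-\e b)\,\rhoi(\bfx,b)\exp(\dots)\,db$ with an exponential factor bounded by $1$, so by Lemma \ref{lem.ci} it is a fixed source lying in $L^2(\om)$.

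Next I would set up the fixed-point map. For $w\in X_T$, extended by $\zp$ at negative times, define $\Phi(w)(\cdot,t)$ as the unique solution in $H^1_0(\om)$ of the elliptic problem above with $w$ substituted into the memory source. The bilinear form $a(u,v)=\e\int_\om\nabla u\cdot\nabla v+\int_\om\muze\,uv$ is coercive on $H^1_0(\om)$, because $\muze\geq\muzm>0$ by Lemma \ref{prop.rho.bounds}, so Lax--Milgram applies. To check $\Phi:X_T\to X_T$ I would bound the source in $L^2(\om)$: the recent part is controlled by $\frac{M}{\e}\int_0^t\|w(\cdot,s)\|_{L^2(\om)}\,ds$ via the $L^\infty$ bound on $\rhoe$ from Theorem \ref{prop.rho.exist} and the change of variables $s=t-\e a$, while the ancient part is dominated by the $L^2$ function of Lemma \ref{lem.ci}; coercivity then yields $\sup_t\|\nabla\Phi(w)(\cdot,t)\|_{L^2(\om)}<\infty$, together with the required measurability in $t$.

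The core estimate is the contraction. For $w_1,w_2\in X_T$ sharing the same past $\zp$, the difference $\delta=w_1-w_2$ vanishes for $s<0$, so the ancient part cancels and $U:=\Phi(w_1)-\Phi(w_2)$ solves $-\e\Delta U+\muze U=\int_0^{t/\e}\delta(\bfx,t-\e a)\rhoe\,da$. Testing against $U$ and using coercivity gives, with $K=K(M,\muzm,\e)$,
\[
\|\nabla U(\cdot,t)\|_{L^2(\om)}\ \leq\ K\int_0^t\|\delta(\cdot,s)\|_{L^2(\om)}\,ds,
\]
and Poincar\'e's inequality lets me close this in the $X_T$ norm. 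Either iterating $\Phi$ (the $n$-th iterate contracts with factor $(KT)^n/n!$) or using the weighted norm $\sup_t e^{-\lambda t}\|\cdot\|$ for $\lambda$ large produces a strict contraction, hence a unique fixed point in $X_T$ by the Banach theorem; this fixed point is the desired weak solution, and uniqueness is immediate. Since $K$ does not deteriorate as $T\to\infty$ and solutions on nested time intervals coincide by uniqueness, the solution extends to $T=\infty$.

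I expect the main obstacle to be the regularity of the memory source, and in particular the ancient-past term: controlling $\int_\rr \zp(\bfx,-\e a)\rhoi(\bfx,a)\,da$ in $L^2(\om)$ is exactly what forces the decay and moment bounds on $\rhoi$ (Assumptions \ref{hypo.data.deux}) and the $L^2$-Lipschitz regularity of $\zp$ (Assumptions \ref{hypo.data.trois}), and is precisely the content of the preliminary Lemma \ref{lem.ci}. A secondary technical point is to ensure that the fixed point, if obtained a priori in the weaker time-integrated norm, genuinely lies in $X_T=L^\infty((0,T);H^1_0(\om))$ with the correct measurability, which I would recover by applying the coercivity bound to the fixed point itself.
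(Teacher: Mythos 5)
Your proposal is correct and follows essentially the same route as the paper: the same rewriting of \eqref{eq.weak.eps} as the elliptic problem $(\muze-\e\Delta)z=\int_\rr \zeps(\bfx,t-\e a)\rhoe\,da$, the same splitting of the memory term at $a=t/\e$ with Lemma \ref{lem.ci} controlling the ancient-past source, and the same Lax--Milgram plus Banach fixed-point scheme with contraction constant of order $MT/(\e\min(\e,\mumin))$. The only (harmless) deviation is the final step, where you exploit the Volterra structure to get a global-in-time contraction via Picard iterates or an exponentially weighted norm, while the paper contracts only on a short interval $T<\e\min(\e,\mumin)/M$ and then extends by continuation; the two devices are interchangeable here.
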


\begin{proof}
 We define the map $\Phi$ that, given $w \in X_T$, provides $z$ being the weak solution of  the problem
\begin{equation}\label{eq.Phi}
\begin{aligned}
 ( \muze(\bfx,t)-\e \Delta_\bfx ) z(\bfx,t) = \int_0^{t/\e} & w(\bfx,t-\e a) \rhoe(\bfx,a,t) da\\
 & +\int_{t/\e}^\infty \zp(\bfx,t-\e a) \rhoe(\bfx,a,t) da, 
\end{aligned}
\end{equation}
for almost every $t\in(0,T)$.
We aim at showing that the map admits a unique fixed point using the Banach fixed point theorem. 
%First we show that $\Phi$ is endomorphic, then that it is a contraction. Finally, global existence is obtained by continuation.
\begin{enumerate}
 \item $\Phi$ is endomorphic on $X_T$~:
 by Fubini one has that
 $$
\begin{aligned}
&   \int_{\Omega}   \left( \int_0^{t /\e}   w(\bfx,t-\e a) \rhoe(\bfx,a,t) da \right) v(\bfx) d \bfx 
\\ & 
\qquad = 
\int_0^{t /\e}\int_{\Omega}  w(\bfx,t-\e a) \rhoe(\bfx,a,t)  v(\bfx) d \bfx \, da \\
 & \qquad  \leq M \int_0^{t/\e} \nrm{w(\cdot,t-\e a)}{L^2(\Omega)} \nrm{v}{L^2(\Omega)} da
 \leq \frac{M t}{\e} \nrm{w}{X_T}  \nrm{v}{L^2(\Omega)} \; ,
\end{aligned}
 $$
 which proves, taking the supremum over all $v \in L^2(\Omega)$ s.t. $\nrm{v}{H^1(\om)} \leq 1$, that $\int_0^{t/ \e} $ $w(x,t-\e a)$ $ \rhoe(\bfx,$ $a,t) da$ is indeed an $L^2(\Omega)$-function.
 
 Setting $J(\bfx,t):=\int_{t/\e}^\infty \zp(\bfx,t-\e a) \rhoe(\bfx,a) da$, one has the estimate~:
 $$
\begin{aligned}
|J(\bfx,t)| \leq   \int_{t/\e}^\infty | \zp(\bfx,t-\e a) | \rhoe(\bfx,a,t) da  \leq & \int_{t / \e}^\infty | \zp(\bfx,t-\e a) | \rhoi(\bfx,a-t/ \e)  da \\
 & = \int_{\rr} |\zp(\bfx,-\e  a)| \rhoi (\bfx,a) da = I(\bfx)
\end{aligned}
 $$
 By Lemma \ref{lem.ci}, the latter term is bounded in $L^2(\om)$ by a constant $C_J$.
The right hand side in \eqref{eq.Phi} is thus an $L^2(\Omega)$ function for every time $t>0$. By the Lax-Milgram theorem, there exists
a unique solution $\zeps(\cdot,t)\in H^1_0(\Omega)$ of the problem \eqref{eq.Phi} for every fixed time $t>0$.
Moreover one has~:
$$
\min(\e,\mumin) \nrm{z(\cdot,t)}{H^1(\Omega)} \leq \frac{M t}{\e} \nrm{w}{X_T} + \nrm{I}{L^2(\Omega)} \leq \frac{M t}{\e} \nrm{w}{X_T} + C_J \; ,
$$
and taking the supremum over all times in $(0,T)$, gives~:
$$
\nrm{z}{X_T} \leq \frac{ M T}{\e \min(\e,\mumin)} \nrm{w}{X_T} + C'.
$$
This shows that $\Phi$ is an endomorphism.
\newcommand{\hw}{\hat{w}}
\item Contraction~: setting $\hz:=z_2-z_1$ (resp. $\hw:=w_2-w_1$) where $z_i = \Phi(w_i)$ for $i\in\{1,2\}$ and applying the same arguments
as above one has :
$$
\nrm{\hz}{X_T} \leq \frac{  M T}{\e \min(\e,\mumin)} \nrm{\hw}{X_T}
$$
which proves that $\Phi$ contracts as soon as $T< \e \min(\e,\mumin) / M$.
These two steps provide local existence of a fixed point $\zeps \in X_T$. 
\item Continuation~: as the time interval for which $\Phi$ is a contraction does not depend on the initial condition, we can extend the solution by continuation. This shows the global existence for any positive time $T$, possibly infinite, for $\e>0$ fixed.
\end{enumerate}
\end{proof}

\begin{coro}\label{coro.elliptic.reg}
 Under the previous hypotheses,  $\zeps \in L^\infty((0,T);H^2(\Omega))$, the bound depending on $\e^{-1}$.
\end{coro}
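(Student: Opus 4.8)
The plan is to freeze the time variable and read the fixed-point identity \eqref{eq.Phi}, evaluated at its solution $w=\zeps$, as a family (indexed by $t$) of elliptic Dirichlet problems on $\om$, to which classical $H^2$-regularity applies thanks to the $C^{1,1}$ regularity of $\dom$. Concretely, I would rewrite the equation as a Poisson problem by transferring the bounded zeroth-order coefficient $\muze$ to the right-hand side:
\begin{equation*}
-\e\,\Delta_\bfx \zeps(\cdot,t) = g(\cdot,t) \quad \text{in } \om, \qquad \zeps(\cdot,t)=0 \text{ on } \dom,
\end{equation*}
where, with $J$ the past-data term introduced in the existence proof,
\begin{equation*}
g(\bfx,t) := \int_0^{t/\e} \zeps(\bfx,t-\e a)\,\rhoe(\bfx,a,t)\,da + J(\bfx,t) - \muze(\bfx,t)\,\zeps(\bfx,t).
\end{equation*}
This is exactly the weak formulation \eqref{eq.weak.eps} rearranged, so $\zeps(\cdot,t)\in H^1_0(\om)$ solves $-\Delta\zeps(\cdot,t)=\e^{-1}g(\cdot,t)$ weakly.

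Second, I would check that $g(\cdot,t)\in L^2(\om)$ with a bound uniform in $t\in(0,T)$, reusing verbatim the $L^2$-estimates of the endomorphism step of the existence proof. Indeed, by Fubini and $\rhoe\le M$ the integral term is bounded in $L^2(\om)$ by $\tfrac{MT}{\e}\nrm{\zeps}{X_T}$; the term $J$ is bounded in $L^2(\om)$ by $C_J$ thanks to Lemma \ref{lem.ci}; and since $0\le\muze<1$ by Lemma \ref{prop.rho.bounds}, the last term is controlled in $L^2(\om)$ by $\nrm{\zeps}{X_T}$. Hence $\sup_{t\in(0,T)} \nrm{g(\cdot,t)}{L^2(\om)} \le C(\e^{-1},T)$, which is finite because $\zeps\in X_T$.

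Third, since $\dom$ is $C^{1,1}$, the classical elliptic regularity theorem for the Dirichlet Laplacian (see \cite{Grisvard}) yields, for almost every $t$, that $\zeps(\cdot,t)\in H^2(\om)$ together with
\begin{equation*}
\nrm{\zeps(\cdot,t)}{H^2(\om)} \le \frac{C_\om}{\e}\,\nrm{g(\cdot,t)}{L^2(\om)},
\end{equation*}
the constant $C_\om$ depending only on the domain. Taking the essential supremum over $t\in(0,T)$ then gives $\zeps\in L^\infty((0,T);H^2(\om))$, with a bound that degenerates as $\e\to0$ (one negative power of $\e$ from the elliptic estimate, one already carried by $g$), in particular recovering the announced $\e^{-1}$ dependence.

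I do not expect any serious obstacle: the only point requiring care is the uniformity in $t$ of the $L^2$-bound on $g$, but this is immediate from the estimates already established in the fixed-point argument, applied pointwise in $t$ before passing to the essential supremum. No new $\e$-uniform estimate or compactness is needed, which is precisely why the resulting bound is allowed to blow up as $\e\to0$.
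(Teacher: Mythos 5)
Your proposal is correct and follows essentially the same route as the paper: both rearrange the fixed-point identity \eqref{eq.Phi} into a Poisson problem with the zeroth-order term $\muze\zeps$ moved to the right-hand side, verify that this right-hand side lies in $L^2(\Omega)$ uniformly in time by reusing the estimates of the existence proof (the $\frac{MT}{\e}\nrm{\zeps}{X_T}$ bound on the delay integral, Lemma \ref{lem.ci} for $J$, and $0\leq\muze<1$), and then invoke Grisvard's elliptic regularity for $C^{1,1}$ domains. The only cosmetic difference is that you track the powers of $\e$ slightly more explicitly than the paper does; the conclusion and its non-uniformity in $\e$ are identical.
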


\begin{proof}
 The solution of the fixed point solves~:
 $$
- \Delta \zeps = \frac{1}{\e} \left\{ -\muze \zeps + \int_0^{t/\e} \zeps(\bfx,t-\e a) \rhoe(\bfx,a,t) da + J(\bfx,t) \right\},
$$
The right hand side is in $L^2(\Omega)$ for almost any time by the same arguments as above.
Because the domain $\Omega$ is smooth enough, elliptic regularity holds and 
 the claim follows (cf for instance Theorem 2.4.2.5 p.124 \cite{Grisvard}).
\end{proof}

For the rest of the article, we need to define $\dt \zeps$ and investigate to which function space it belongs.

\begin{thm}\label{thm.dtz.non.unif}
 Under the previous hypotheses, $\dt \zeps \in L^\infty((0,T);H^2(\Omega)\cap H^1_0(\om))$.
\end{thm}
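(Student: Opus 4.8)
The plan is to recast the weak equation \eqref{eq.weak.eps} as the elliptic problem
\begin{equation*}
(\muze(\bfx,t)-\e\Delta_\bfx)\,\zeps(\bfx,t) = F(\bfx,t),\qquad F(\bfx,t):=\int_\rr \zeps(\bfx,t-\e a)\,\rhoe(\bfx,a,t)\,da,
\end{equation*}
which is exactly \eqref{eq.Phi} evaluated at the fixed point $w=\zeps$, with the convention $\zeps(\bfx,s)=\zp(\bfx,s)$ for $s<0$ from \eqref{eq.z.eps}. The zeroth-order coefficient is bounded below by $\muzm>0$ by Lemma \ref{prop.rho.bounds}, so $\muze-\e\Delta$ is uniformly elliptic for fixed $\e$. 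Differentiating formally in time one expects $\dt\zeps$ to solve
\begin{equation*}
(\muze-\e\Delta)\,\dt\zeps = \dt F - (\dt\muze)\,\zeps =: G,
\end{equation*}
and the argument splits into (a) a tractable, $\dt\zeps$-free expression for $G$, (b) the bound $G\in L^\infty((0,T);L^2(\om))$, and (c) the rigorous justification of the differentiation followed by elliptic regularity.

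The computation of $G$ is where the structure of the model enters. Integrating the transport equation \eqref{eq.rho.eps} in age gives $\e\dt\muze=\beps(1-\muze)-\int_\rr\zteps\rhoe\,da$, hence $\dt\muze\in L^\infty_{\bfx,t}$ for fixed $\e$. To differentiate $F$ I would replace $\dt\rhoe$ by $-\tfrac1\e(\da\rhoe+\zteps\rhoe)$ using \eqref{eq.rho.eps} and integrate by parts in $a$, keeping track of the boundary value $\rhoe(\bfx,0,t)=\beps(1-\muze)$, of the decay as $a\to\infty$, and of the possible discontinuity of $\zeps$ across $t=0$ (the value $\zeps(\bfx,0^+)$ produced by the elliptic problem need not coincide with $\zp(\bfx,0)$). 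The decisive point is that the two integrals carrying $\dt\zeps$ — namely $\int_\rr(\dt\zeps)(\bfx,t-\e a)\rhoe\,da$ from differentiating the retarded argument, and the interior part of the integration by parts — cancel exactly, leaving
\begin{equation*}
\dt F = \tfrac1\e\,\beps(1-\muze)\,\zeps(\bfx,t) - \tfrac1\e\int_\rr \zteps\,\zeps(\bfx,t-\e a)\,\rhoe\,da - \tfrac1\e\big(\zeps(\bfx,0^+)-\zp(\bfx,0)\big)\,\rhoe(\bfx,t/\e,t),
\end{equation*}
an expression involving only $\zeps$ and the data. Then $G\in L^\infty((0,T);L^2(\om))$ is routine: the products of an $L^\infty_{\bfx,t}$ factor ($\beps(1-\muze)$, $\dt\muze$) with $\zeps(\cdot,t)\in H^1_0(\om)\hookrightarrow L^2(\om)$ are uniformly bounded in $L^2$; the seam term is an $L^\infty_\bfx$ multiple of $\zeps(\cdot,0^+)-\zp(\cdot,0)\in H^1_0(\om)$ (using $\zeps(\cdot,0)\in H^1_0$ and Hypothesis \ref{hypo.data.trois}\,$i)$); and the age-integral is estimated in $L^2(\om)$, uniformly in $t$, by the same Fubini and Cauchy--Schwarz argument used to bound $F$ in the existence proof and in Lemma \ref{lem.ci}.

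The main obstacle is to turn the formal differentiation into a rigorous one, and in particular to know a priori that $t\mapsto\zeps(\cdot,t)$ is Lipschitz into $L^2(\om)$, which is what controls the difference quotients of $F$ on the range $a<t/\e$, where $\zeps$ refers to itself. I would work with $D_h\zeps:=(\zeps(\cdot,t+h)-\zeps(\cdot,t))/h$; subtracting the elliptic equations at $t+h$ and $t$ gives $(\muze(t+h)-\e\Delta)D_h\zeps = D_hF-(D_h\muze)\zeps(t)$. Testing against $D_h\zeps$ and using $\muze\geq\muzm$ yields $\|D_h\zeps\|_{L^2}\le \muzm^{-1}\big(\|D_hF\|_{L^2}+\|D_h\muze\|_\infty\|\zeps(t)\|_{L^2}\big)$; the self-referential part of $\|D_hF\|_{L^2}$ enters with the same contraction factor $MT/(\e\min(\e,\muzm))$ as in the existence proof, while the genuinely Lipschitz contributions come from the past data (Hypothesis \ref{hypo.data.trois}\,$ii)$) and from the time-Lipschitz regularity of $\rhoe$ (immediate from the characteristics \eqref{rho_model_by_characteristics} for fixed $\e$). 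A Gronwall and continuation argument on short intervals then gives a uniform time-Lipschitz bound, so that $D_h\zeps$ is bounded in $H^1_0(\om)$; passing to the weak limit $h\to0$ identifies $\dt\zeps$ as the solution of $(\muze-\e\Delta)\dt\zeps=G$. Finally, since $G\in L^\infty(L^2)$ I rewrite $-\Delta\dt\zeps=\e^{-1}(G-\muze\dt\zeps)\in L^2(\om)$ and, $\dom$ being $C^{1,1}$, invoke elliptic regularity (\cite{Grisvard}, Thm. 2.4.2.5) to conclude $\dt\zeps\in L^\infty((0,T);H^2(\om)\cap H^1_0(\om))$.
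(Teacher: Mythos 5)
Your proposal follows the same overall architecture as the paper's proof --- recast \eqref{eq.weak.eps} as $(\muze-\e\Delta)\zeps=F$ with $F=\int_\rr \zeps(\bfx,t-\e a)\rhoe\,da$, differentiate in time, show the right-hand side contains no $\dt\zeps$ and lies in $L^\infty((0,T);L^2(\om))$, and conclude by Lax--Milgram plus elliptic regularity --- but the two technical steps are implemented differently, and your formula for $\dt F$ has a flaw. The paper obtains your ``decisive cancellation'' structurally: it observes that the product $p_\e(\bfx,a,t):=\zeps(\bfx,t-\e a)\rhoe(\bfx,a,t)$ itself solves the transport equation $(\e\dt+\da+\zteps)p_\e=0$ with boundary value $\beps(1-\muze)\zeps$ and initial value $\rhoi(\bfx,a)\zp(\bfx,-\e a)$, since $\zeps(\bfx,t-\e a)$ is constant along the characteristics of $\e\dt+\da$; integrating in age gives $\e\,\frac{d}{dt}\int_\rr p_\e\,da+\int_\rr\zteps p_\e\,da=\beps(1-\muze)\zeps$ exactly, with \emph{no} seam term. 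Indeed the possible jump $\zeps(\cdot,0^+)-\zp(\cdot,0)$ sits on the characteristic line $a=t/\e$ and is annihilated by $\e\dt+\da$: if you redo your Leibniz computation keeping all distributional contributions, the boundary term from the moving split point $a=t/\e$ cancels against the jump term produced by the integration by parts in $a$, so your extra term $-\frac1\e\bigl(\zeps(\bfx,0^+)-\zp(\bfx,0)\bigr)\rhoe(\bfx,t/\e,t)$ is spurious (and its sign is wrong even as a bare Leibniz boundary term). This slip is harmless for the estimates, since that term lies in $L^\infty_t L^2_\bfx$ anyway, but it should be corrected. The more substantive difference is your step (c): because $\frac{d}{dt}\int_\rr p_\e\,da$ is bounded in $L^\infty_t L^2_\bfx$ directly in terms of $\nrm{\zeps}{X_T}$ and Lemma \ref{lem.ci} --- no self-reference survives the cancellation --- the paper bounds the difference quotients $\tet\int_\rr p_\e\,da$ uniformly in $\tau$ by a mean-value/Jensen argument, applies Lax--Milgram to the finite-difference equation, and extracts $\dt\zeps$ as a weak-$*$ limit; no Gronwall, no smallness-of-time continuation. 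Your detour through a Lipschitz-into-$L^2$ bound for $\zeps$ via contraction and Gronwall can be made to work (the straddling range $a\in(t/\e,(t+h)/\e)$ has measure $h/\e$, which absorbs the $1/h$), but it is heavier than needed, and if you split the product $\zeps(t-\e a)\rhoe$ inside $D_hF$ you must additionally handle the jump of $\rhoe$ itself across $a=t/\e$ (no compatibility $\rhoi(\bfx,0)=\beps(\bfx,0)(1-\mu_{0,I})$ is assumed) --- precisely the bookkeeping that treating $p_\e$ as a single solution of the transport system avoids.
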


\begin{proof}
 As we do not know to which space the time derivative belongs, we estimate first  a finite difference in time. Namely 
 we set 
 $$
\tet z (\bfx,t) := \frac{ z(\bfx,t+\tau)-z(\bfx,t)}{\tau} 
$$
and compute the problem it  solves~: for all $v\in \bhoz$ 
\newcommand{\mfZ}{{\mathfrak Z}}
\begin{equation}\label{eq.f.v.dt.zeps}
\begin{aligned}
 (\muze \tet \zeps,v) & + \e (\nabla \tet \zeps \nabla v) \\
 &= - \left( \left( \tet \muze \right) \zeps(\bfx,t) + \tet \int_{\rr}\zeps(\bfx,t-\e a) \rhoe(\bfx,a,t) da , v\right).
\end{aligned} 
\end{equation}
\newcommand{\mfR}{p_\e}
\noindent The product $\mfR(\bfx,a,t) := \zeps(\bfx,t-\e a) \rhoe (\bfx,a,t)$ solves the following system~: %of equations in the sense of characteristics~:
\begin{equation}\label{eq.R}
\left\{
 \begin{aligned}
& (\e \dt + \da + \zteps) \mfR = 0, & (\bfx,a,t) \in \Omega \times (\rr)^2 \\
& \mfR(\bfx,0,t) = \beps(\bfx,t)(1-\muze(\bfx,t)) \zeps(\bfx,t) ,& (\bfx,a,t) \in \Omega \times \{ 0\} \times \rr \\
& \mfR(\bfx,a,0) = \rhoi(\bfx,a) \zp(\bfx,-\e a) ,& (\bfx,a,t) \in \Omega  \times \rr \times \{ 0\}
\end{aligned}
\right.
\end{equation}
which is to be understood in the sense of characteristics.
One has easily in the sense of distributions, 
$$
\e \ddt{} \int_{\rr} \mfR(\bfx,a,t) da + \int_{\rr}  \zteps \mfR (\bfx,a,t) da = \beps(\bfx,t)(1-\muze(\bfx,t))\zeps(\bfx,t) .
$$
We focus on the $L^2(\Omega)$-bound of $\e \ddt{} \int_{\rr} \mfR \,  da$. Indeed~:
$$
\nrm{\int_{\rr}  \zteps \mfR (\cdot ,a,t) da}{L^2(\Omega)} \leq \frac{ M \ztmax T}{\e} \nrm{\zeps}{X_T} + \ztmax C_I \; ,
$$
whereas 
$$
\nrm{\beps (1-\muze) \zeps}{L^\infty((0,T);L^2(\Omega))} \leq \bmax \nrm{\zeps}{X_T} \; .
$$
Using Jensen's inequality and the estimate on the time derivative obtained above, one has~:
$$
\begin{aligned}
 &  \nrm{ \tet \int_{\rr} \mfR (\cdot,a,t)da }{L^2(\Omega)}^2  =   \nrm{ \frac{1}{\tau} \int_t^{t+\tau} \ddt{} \int_{\rr} \mfR(\cdot,a,s)da ds }{L^2(\Omega)}^2 \\
 &  \leq \nrm{\ddt \int_{\rr} \mfR(\cdot,a,\cdot)da}{L^\infty((t,t+\tau);L^2(\Omega))}^2 \leq \nrm{\ddt \int_{\rr} \mfR(\cdot,a,\cdot)\, da}{L^\infty((0,T);L^2(\Omega))}^2
\end{aligned}
$$
for every $t>0$.
The time derivative of $\muze$ can be estimated as follows~:
$$
\e \dt \muze = \beps(1-\muze) - \int_{\rr} \zteps \rhoe da \leq \bmax+ \ztmax < \infty
$$
and thus $\dt \muze \in L^\infty(\Omega \times \rr)$. One has then as above~:
$\left(\tet \muze\right)\zeps \in$ $L^\infty(\rr;$ $L^2(\Omega))$, which gives by Lax-Milgram applied to \eqref{eq.f.v.dt.zeps}~:
$$
\min(\e,\mumin) \nrm{\tet \zeps(\cdot,t)}{H^1(\Omega))}<C
$$
for every fixed $t\in[0,T]$. 
Moreover, by standard elliptic regularity and since the right hand side 
in \eqref{eq.f.v.dt.zeps} is an $L^2(\om)$ function, 
$\nrm{\tet \zeps(\cdot,t)}{H^2(\Omega))} < \infty$.
Thus, modulo the extraction of a subsequence $(\tau_k)_{k \in \N}$, there exists $L^\infty((0,T);H^2(\Omega))$ weak-$*$ limit which is 
a weak time derivative of $\zeps$ (see for instance Theorem 3 Section 5.8.2. \cite{Evans.Book}), and the derivative
satisfies the same $L^\infty((0,T);H^2(\Omega))$ bound.
\end{proof}

\begin{rem}
   Estimates above are not uniform with respect to $\e$. 
 These computations are  performed only in order to give a meaning to the time derivative of $\zeps$, 
 and show that locally with respect to $\e$ it is an $L^\infty_t H^2_{\bfx}$ function.
\end{rem}

\section{Energy estimates}\label{sec.nrj}

\subsection{The energy $\cet$ decreases with time}
\begin{thm}\label{thm.nrj}
Under hypotheses \ref{hypo.data}, \ref{hypo.data.deux} and \ref{hypo.data.trois}, 
 for all times $t \geq 0$, the energy $\cet$ is a decreasing function, {\em i.e}~:
 $$
 \cet(\zeps(\cdot,t)) \leq \cE_0(\zp(\cdot,0)) \; .
 $$
 Moreover, one has as well that
 $$
 \int_0^T \int_{\Omega\times\rr} \zteps(\bfx,a,t)  \rhoe(\bfx,a,t) \left( \frac{ \zeps(\bfx,t)-\zeps(\bfx,t-\e a)}{\e} \right)^2 da \, d\bfx \, dt < \cE_0(\zp(\cdot,0)).
 $$
 
\end{thm}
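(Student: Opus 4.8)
The plan is to establish the exact dissipation identity
\[
\ddr{}{t}\cet(\zeps(\cdot,t)) = -\nud \int_{\Omega}\int_{\rr} \zteps(\bfx,a,t)\,\rhoe(\bfx,a,t)\,\veps(\bfx,a,t)^2 \, da\, d\bfx \;\le\; 0 ,
\]
from which both assertions follow by integrating in time. I would write $\cet(\zeps(\cdot,t)) = E_1(t) + E_2(t)$, with the elastic part $E_1 = \nud\int_{\Omega}|\nabla\zeps|^2\,d\bfx$ and the delay part $E_2 = \nud\int_{\Omega}\int_{\rr} \e\,\veps^2\,\rhoe\,da\,d\bfx$, where $\veps := (\zeps(\bfx,t)-\zeps(\bfx,t-\e a))/\e$ is the elongation, and differentiate each term in $t$. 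The regularity needed to differentiate under the integral and to integrate by parts, namely $\zeps(\cdot,t),\dt\zeps(\cdot,t)\in H^2(\Omega)\cap H^1_0(\Omega)$ for fixed $\e$, is exactly what Corollary \ref{coro.elliptic.reg} and Theorem \ref{thm.dtz.non.unif} provide.

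For $E_1$, integrating by parts in $\bfx$ and using $\zeps,\dt\zeps\in H^1_0(\Omega)$ gives $\ddr{}{t}E_1 = -\int_{\Omega}\Delta\zeps\,\dt\zeps\,d\bfx$. For $E_2$, the key is to substitute $\e\,\dt\rhoe = -\da\rhoe - \zteps\rhoe$ from \eqref{eq.rho.eps}: the term carrying $\zteps$ reproduces precisely $-\nud\int_{\Omega}\int_{\rr}\zteps\rhoe\veps^2$, the sought dissipation, while the $\da\rhoe$ term is integrated by parts in $a$. Its boundary contribution vanishes at $a=0$ because $\veps|_{a=0}=0$, and at $a=\infty$ by the exponential decay of $\rhoe$ (Lemma \ref{prop.rho.bounds}, Lemma \ref{lem.rhoz.est}) against the at most linear growth in $a$ of $\zeps(\cdot,t)-\zeps(\cdot,t-\e a)$ guaranteed by the Lipschitz-in-time control in assumption \ref{hypo.data.trois}. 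The surviving transport terms combine through the $a$-independent identity $\e\,\dt\veps + \da\veps = \dt\zeps(\bfx,t)$, so they collapse to $\int_{\Omega}\dt\zeps(\bfx,t)\,\bigl(\int_{\rr}\veps\rhoe\,da\bigr)\,d\bfx$; recognising $\int_{\rr}\veps\rhoe\,da = \cL_\e(\zeps,\rhoe) = \Delta\zeps$ by the Euler--Lagrange equation \eqref{eq.z.eps}, this equals $\int_{\Omega}\dt\zeps\,\Delta\zeps\,d\bfx$, which cancels $\ddr{}{t}E_1$ exactly. Only the dissipation remains, which proves the displayed identity.

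Integrating in $t$ over $(0,T)$ then yields $\cet(\zeps(\cdot,T)) + \nud\int_0^T\int_{\Omega\times\rr}\zteps\rhoe\veps^2 = \cE_0(\zp(\cdot,0))$; the monotonicity $\cet(\zeps(\cdot,t))\le\cE_0(\zp(\cdot,0))$ is immediate, and since $\cet\ge 0$ the full delay dissipation $\int_0^T\int_{\Omega\times\rr}\zteps\rhoe\veps^2$ is controlled by the initial energy (up to the factor fixed by the normalisation of $\cet$), with strict inequality whenever $\cet(\zeps(\cdot,T))>0$. The main obstacle is rigour rather than algebra: $\rhoe$ solves \eqref{eq.rho.eps} only in the sense of characteristics, so the pointwise use of $\dt\rhoe$ and the differentiation under the integral sign are not directly licit. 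I would instead insert the test function $\varphi = \nud\,\e\,\veps^2$ into the weak formulation of Lemma \ref{lemxyz} --- admissible once the moment and decay bounds above secure the integrability of $\varphi$, $\da\varphi$ and $\dt\varphi$ against $\rhoe$ --- thereby turning every formal step into an identity between well-defined integrals, and close the estimate by a truncation/density argument in the test function.
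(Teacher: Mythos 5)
Your route is the paper's route: the same splitting of $\cet$ into elastic and delay parts, the same elongation variable with $(\e\dt+\da)\veps=\dt\zeps$, the same substitution of \eqref{eq.rho.eps} into the delay part, the identification $\int_\rr\rhoe\veps\,da=\cL_\e(\zeps,\rhoe)=\Delta\zeps$ via \eqref{eq.z.eps}, and the same regularity inputs (Corollary \ref{coro.elliptic.reg}, Theorem \ref{thm.dtz.non.unif}) to legitimise $\int_\om\Delta\zeps\,\dt\zeps\,d\bfx=-\nud\ddt{}\int_\om|\nabla\zeps|^2d\bfx$. Incidentally, your dissipation constant $\nud$ is the correct one: the product rule gives $(\e\dt+\da+\zteps)(\rhoe\veps^2)=2\rhoe\veps\,\dt\zeps$, so after halving, the $\zteps$-term carries a factor $\nud$; the paper's display \eqref{eq.fund} drops this factor, which is harmless for the sign but means the second claim of the theorem really holds with $\nud$ in front of the time-integrated dissipation, exactly the normalisation caveat you flag.

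There is, however, one genuine missing step. Integrating your identity over $(0,T)$ yields $\cet(\zeps(\cdot,T))+\nud\int_0^T\int_{\om\times\rr}\zteps\rhoe\veps^2\,da\,d\bfx\,dt=\cE_0(\zeps(\cdot,0))$, with the energy of the \emph{solution} at time zero on the right-hand side — not $\cE_0(\zp(\cdot,0))$ as you wrote. In general $\zeps(\cdot,0)\neq\zp(\cdot,0)$: the former solves the elliptic problem \eqref{eq.zeps.t.0}, the latter is prescribed past data, so your "exact identity" with $\cE_0(\zp(\cdot,0))$ on the right is false as stated. To reach the theorem's bound you must invoke the equivalence between the Euler--Lagrange equation and energy minimization (Lemma \ref{lem.equiv}): since $\zeps(\cdot,0)$ minimizes $\cE_0$ over $H^1_0(\om)$ and $\zp(\cdot,0)\in H^1_0(\om)$, one gets $\cE_0(\zeps(\cdot,0))\le\cE_0(\zp(\cdot,0))$, which is precisely how the paper closes the argument. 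You should also verify that $\cE_0(\zp(\cdot,0))$ is finite — not automatic, and needed for the statement to have content; the paper bounds its delay part by $\e\left(\int_\om C_{\zp}^2\,d\bfx\right)\left(\sup_{\bfx\in\om}\int_\rr\rhoi(\bfx,a)\,a^2\,da\right)$ using the Lipschitz datum of Assumption \ref{hypo.data.trois} and the second moment in Assumption \ref{hypo.data.deux}. Two smaller points: the vanishing of the age-infinity boundary term should be sourced from the characteristics formula \eqref{rho_model_by_characteristics} together with $\zteps\ge\ztmin$ and the initial integrability of $\rhoi$ — Lemma \ref{lem.rhoz.est} concerns $\rhoz$, not $\rhoe$; and your proposed test function $\varphi=\nud\,\e\,\veps^2$ is neither smooth nor bounded (quadratic growth in $a$, regularity only that of $\zeps$), so the truncation/density step in Lemma \ref{lemxyz} is doing real work there, whereas the paper argues along characteristics and instead spends its rigour on the time integration by parts, mollifying $w_\e=\nabla\zeps$ in $C([0,T];L^2(\om))$.
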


\begin{proof}
We use again the same procedure in order to pass from the position to the elongation as in \cite{MiOel.2,MiOel.3}, writing~:
\newcommand{\unk}{u_\e}
\newcommand{\cnk}{{\zeps}}
\begin{equation}
  \label{eq.def.veps}
\unk(\bfx,a,t):=
\begin{cases}
 \frac{\zeps(\bfx,t)-\zeps(\bfx,t-\varepsilon a)}{\varepsilon} & \text{ if } t \geq \e a \; , \\
  \frac{\zeps(\bfx,t)-\zp(\bfx,t-\varepsilon a)}{\varepsilon}& \text{ otherwise} \; .
\end{cases}  
\end{equation}
Indeed, so defined $\veps$ solves 
\begin{equation}\label{eq.u.z}
(\e \dt  + \da )\unk  = \dt \cnk \; ,
\end{equation}
this equation has a meaning in the sense of characteristics, while the right hand side is 
ment as a function in $L^\infty((0,T);H^2(\Omega))$ as shown in the previous section.

Considering the equation satisfied by $\rhoe \veps^2$ and integrating in age gives~:
$$
\frac{\e}{2} \ddt{} \int_{\rr} \rhoe \unk^2 da + \int_{\rr} \zteps \rhoe \unk^2  da = \left( \int_{\rr} \rhoe \veps da \right) \dt \zeps
= \Delta \zeps \dt \zeps \; ,
$$
which integrated in space gives~:
\begin{equation}\label{eq.fund}
\begin{aligned}
  \frac{\e}{2}  \ddt{} \int_{\rr\times\Omega} \rhoe \unk^2 da d \bfx & + \int_{\rr\times \Omega} \zteps \rhoe \unk^2  da d\bfx \\
 & =  -\int_{\Omega} \nabla \zeps \nabla \dt \zeps d\bfx = - \nud \ddt{} \int_\Omega | \nabla \zeps |^2  d\bfx \; .
\end{aligned}
\end{equation}
The latter integration by parts is justified as follows. Set $w_\e := \nabla \zeps$, thanks to Corollary \ref{coro.elliptic.reg} and Theorem \ref{thm.dtz.non.unif}, one has that $w_\e \in W^{1,\infty}([0,T];L^2(\om))\subset C([0,T];L^2(\om))$.
The latter space is separable~: there exists a $C^\infty([0,T]\times\om)$ function %$w_\e^\delta:=w_\e*\omega_\delta$ 
s.t. $w_\e^\delta \to w_\e$ in $C([0,T];L^2(\om))$ strong, and $\dt w_\e^\delta \rightharpoonup \dt w_\e$ in 
$L^\infty((0,T);L^2(\om))$  weak-* ($w^\delta_\e$ can be obtained by the standard mollification). 
In this scenario, one is testing against a $C^1$ function in time, the integration by parts on the regularized functions. Passing to the limit with respect to $\delta$, leads to~:
$$
\int_0^T \varphi(t) \int_\om 2 w_\e \dt w_\e  d\bfx \, dt =\left[ \int_\om | w_\e(\bfx,t) |^2 d\bfx  \varphi(t) \right]_{t=0}^{t=T}  - \int_0^T \int_\om | w_\e |^2 d\bfx \,\dt \varphi \,dt 
$$
for any $\varphi \in C^1([0,T])$. As $\int_\om | w_\e(\bfx,t) |^2 d\bfx$ is an absolutely continuous function of $t$, the integration by part  holds, and thus 
$$
2 \int_\om w_\e \dt w_\e d \bfx= \ddt{} \int_\om |w_\e|^2 d \bfx,\quad \text{ for }a.e. \quad  t\in(0,T).
$$
Finally \eqref{eq.fund}  gives~:
$$
\ddt{} \cet(\zeps(\cdot,t)) \leq 0 \; ,
$$
since $\int_{\rr\times \Omega} \zteps \rhoe \unk^2  da d\bfx$ is positive.
But as $\zeps(\bfx,0)$ solves \eqref{eq.z.eps} at time $t=0$,  by Lemma \ref{lem.equiv},  $\zeps(\bfx,0)$ minimizes  
the energy at time $t=0$. This proves that
$$
\cet(\zeps(\cdot,t)) \leq \cE_0(\zeps(\cdot,0)) \leq \cE_0(\zp(\cdot,0)),
$$
giving the first claim provided the last term is bounded. 
But, by similar arguments as in Lemma \ref{lem.ci}, one has  that
$$
\cE_0(\zp(\cdot,0)) \leq \e \left( \int_{\Omega} C_{\zp}^2  (\bfx) d \bfx \right) \; \left(\sup_{\bfx \in \Omega} \int_{\rr} \rhoi(\bfx,a) a^2 da \right) + \int_{\Omega} | \nabla \zp(\bfx,0) |^2 d \bfx < \infty
$$
the last term being bounded since $\zp(\bfx,0) \in H^1_0(\Omega)$.
Integrating \eqref{eq.fund} in time gives~:
$$
\int_0^T \int_{\rr\times \Omega} \zteps \rhoe \unk^2  da d\bfx dt \leq \cE_0(\zeps(\cdot,0))-\cE(\zeps(\cdot,t)) \leq \cE(\zeps(\cdot,0)) \leq \cE(\zp(\cdot,0)) \; ,
$$
which ends the proof.
\end{proof}

\begin{coro}
 Under the same hypotheses, $\zeps \in X_T$  uniformly with respect to $\e$.
\end{coro}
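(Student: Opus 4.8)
The plan is to read off the uniform $X_T$-bound directly from the energy inequality of Theorem~\ref{thm.nrj}, since all the analytic effort is already contained there. Recall $X_T = L^\infty((0,T);H^1_0(\Omega))$, and that by definition \eqref{eq.nrj} the energy $\cet(\zeps(\cdot,t))$ splits as $\nud\int_\Omega|\nabla\zeps(\cdot,t)|^2\,d\bfx$ plus the delay integral $\nud\int_\Omega\int_{\rr}\frac{|\zeps(\bfx,t)-\zeps(\bfx,t-\e a)|^2}{\e}\rhoe\,da\,d\bfx$. My first step is to exploit the sign of this second contribution: by Lemma~\ref{prop.rho.bounds} one has $\rhoe\geq0$, so the delay term is nonnegative and may simply be dropped, leaving
$$
\nud\int_\Omega|\nabla\zeps(\cdot,t)|^2\,d\bfx \;\leq\; \cet(\zeps(\cdot,t)) \;\leq\; \cE_0(\zp(\cdot,0)),
$$
the last inequality being exactly Theorem~\ref{thm.nrj}.

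Next I would upgrade this Dirichlet-energy bound to a full norm bound. Since $\zeps(\cdot,t)\in H^1_0(\Omega)$ for a.e.\ $t$ and $\Omega$ is bounded, Poincar\'e's inequality gives $\nrm{\zeps(\cdot,t)}{H^1_0(\Omega)}\leq C_P\,\nrm{\nabla\zeps(\cdot,t)}{L^2(\Omega)}$ with $C_P$ depending only on $\Omega$. Combining with the previous display and taking the essential supremum over $t\in(0,T)$ yields
$$
\nrm{\zeps}{X_T}^2 \;\leq\; 2\,C_P^2\,\cE_0(\zp(\cdot,0)).
$$
The entire point of the corollary is then that the right-hand side does not degenerate as $\e\to0$. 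This is immediate from the explicit estimate established at the close of the proof of Theorem~\ref{thm.nrj}, namely
$$
\cE_0(\zp(\cdot,0)) \leq \e\Big(\int_\Omega C_{\zp}^2(\bfx)\,d\bfx\Big)\Big(\sup_{\bfx\in\Omega}\int_{\rr}\rhoi(\bfx,a)\,a^2\,da\Big)+\int_\Omega|\nabla\zp(\bfx,0)|^2\,d\bfx,
$$
in which the only $\e$-dependent summand carries an explicit factor $\e$ and hence stays bounded (indeed vanishes) as $\e\to0$, while the remaining term $\int_\Omega|\nabla\zp(\bfx,0)|^2\,d\bfx$ is finite and independent of $\e$ under hypothesis~\ref{hypo.data.trois}~$i)a)$.

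I do not anticipate any genuine obstacle: the delicate compactness and integration-by-parts arguments have all been carried out in Theorem~\ref{thm.nrj}, and this corollary amounts only to discarding the nonnegative delay term, invoking Poincar\'e, and observing that the upper bound $\cE_0(\zp(\cdot,0))$ is controlled uniformly in $\e$. The sole point requiring a moment's care is making the uniformity explicit, i.e.\ pointing out that the $\e$-factor in the past-data contribution prevents the bound from blowing up in the singular limit; everything else is routine.
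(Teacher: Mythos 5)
Your proposal is correct and follows essentially the same route as the paper, which likewise deduces the uniform bound from the energy decay of Theorem~\ref{thm.nrj} (the gradient part of $\cet$) completed by the Poincar\'e inequality to recover the full $H^1_0$-norm. Your only addition is to spell out explicitly what the paper leaves implicit --- dropping the nonnegative delay term via $\rhoe\geq0$ and noting that the bound $\cE_0(\zp(\cdot,0))$ is uniform in $\e$ because its $\e$-dependent summand carries an explicit factor $\e$ --- which is a faithful unpacking rather than a different argument.
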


\begin{proof}
  The bound on  the gradient is completed by the norm of $\zeps(\cdot,t)$ in $L^2(\Omega)$ by  the Poincar\'e inequality.% provides a uniform bound in the $L^\infty_t L^2_{\bfx}$ norm.
\end{proof}

\begin{thm}\label{thm.dt.zeps.l2}
 Under the same hypotheses as above,  $\dt \zeps$ in $L^2(Q_T)$ and the bound is uniform in $\e$.
\end{thm}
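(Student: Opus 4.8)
The plan is to derive from the elongation equation an elliptic problem solved by $\dt\zeps$ whose right-hand side is controlled, in $L^2(Q_T)$ and uniformly in $\e$, by the dissipation already produced in Theorem~\ref{thm.nrj}. Indeed, that theorem furnishes the uniform bound $\int_0^T\int_{\Omega\times\rr}\zteps\rhoe\veps^2\,da\,d\bfx\,dt<\cE_0(\zp(\cdot,0))$, and $\cE_0(\zp(\cdot,0))$ is itself bounded independently of $\e$ (its $\e$-dependent contribution vanishes as $\e\to0$). This is the single uniform quantity I intend to exploit.

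First I would multiply the elongation equation \eqref{eq.u.z}, namely $(\e\dt+\da)\veps=\dt\zeps$, by $\rhoe$ and integrate in age. On the right one gets $\dt\zeps\int_\rr\rhoe\,da=\muze\dt\zeps$. On the left, integrating the $\da$-term by parts and using $\da\rhoe=-\e\dt\rhoe-\zteps\rhoe$ from \eqref{eq.rho.eps}, the two contributions $\e\int_\rr\veps\dt\rhoe\,da$ cancel, the boundary terms vanish because $\veps(\bfx,0,t)=0$ and $\rhoe$ decays as $a\to\infty$, and there remains $\e\dt\int_\rr\veps\rhoe\,da+\int_\rr\zteps\rhoe\veps\,da$. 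Recalling the identity $\int_\rr\veps\rhoe\,da=\cL_\e(\zeps,\rhoe)=\Delta\zeps$ already used in the proof of Theorem~\ref{thm.nrj}, this yields the elliptic equation
\begin{equation*}
\muze\,\dt\zeps-\e\,\Delta(\dt\zeps)=\int_\rr\zteps\rhoe\veps\,da=:g_\e,
\end{equation*}
where the interchange of $\dt$ and $\Delta$ is legitimate since $\dt\zeps\in L^\infty((0,T);H^2(\Omega)\cap H^1_0(\om))$ by Theorem~\ref{thm.dtz.non.unif}.

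Next I would bound $g_\e$ in $L^2(Q_T)$. By Cauchy--Schwarz in age, $|g_\e|\le(\int_\rr\zteps\rhoe\,da)^{1/2}(\int_\rr\zteps\rhoe\veps^2\,da)^{1/2}\le\ztmax^{1/2}(\int_\rr\zteps\rhoe\veps^2\,da)^{1/2}$, using $\zteps\le\ztmax$ and $\muze<1$ from Lemma~\ref{prop.rho.bounds}. Squaring, integrating over $Q_T$ and invoking the dissipation bound of Theorem~\ref{thm.nrj} gives $\nrm{g_\e}{L^2(Q_T)}^2\le\ztmax\,\cE_0(\zp(\cdot,0))$, uniformly in $\e$. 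Finally I would test the elliptic equation with $\dt\zeps\in H^1_0(\Omega)$: since $-\e\int_\om\Delta(\dt\zeps)\,\dt\zeps\,d\bfx=\e\nrm{\nabla\dt\zeps}{L^2(\Omega)}^2\ge0$ and $\muze\ge\muzm>0$, one gets $\muzm\nrm{\dt\zeps(\cdot,t)}{L^2(\Omega)}^2\le\int_\om g_\e\,\dt\zeps\,d\bfx\le\nrm{g_\e(\cdot,t)}{L^2(\Omega)}\nrm{\dt\zeps(\cdot,t)}{L^2(\Omega)}$, hence $\nrm{\dt\zeps(\cdot,t)}{L^2(\Omega)}\le\muzm^{-1}\nrm{g_\e(\cdot,t)}{L^2(\Omega)}$; integrating in time yields $\nrm{\dt\zeps}{L^2(Q_T)}^2\le\muzm^{-2}\ztmax\,\cE_0(\zp(\cdot,0))$, the desired uniform bound. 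The main obstacle is the rigorous justification of the age-integration-by-parts and of the cancellation producing the formula for $g_\e$ at the regularity available from Section~\ref{sec.exist}; I expect this to be dispatched exactly as the integration by parts in $t$ was in Theorem~\ref{thm.nrj}, by a mollification argument in the characteristic formulation, the vanishing of the boundary term at $a=0$ being the only genuinely delicate point.
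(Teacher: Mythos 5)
Your proposal is correct and follows essentially the same route as the paper: multiply the transport equation for $\veps$ by $\rhoe$, integrate in age to obtain $(\muze-\e\Delta)\dt\zeps=\int_\rr\zteps\rhoe\veps\,da$ (the paper gets this by differentiating the weak form $\left(\int_\rr\rhoe\veps\,da,v\right)+(\nabla\zeps,\nabla v)=0$ in time, which is equivalent to your substitution $\int_\rr\rhoe\veps\,da=\Delta\zeps$, both justified by Theorem~\ref{thm.dtz.non.unif}), then bound the right-hand side in $L^2(Q_T)$ by Cauchy--Schwarz/Jensen in age together with the dissipation estimate of Theorem~\ref{thm.nrj}, and conclude by Lax--Milgram with the lower bound on $\muze$. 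Your only slip is cosmetic: the terms $\e\int_\rr\rhoe\dt\veps\,da$ and $\e\int_\rr\veps\dt\rhoe\,da$ do not cancel but combine by the product rule into $\e\dt\int_\rr\rhoe\veps\,da$, which is exactly what your displayed identity states.
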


\begin{proof}
Multiplying $\veps$ by $\rhoe$ it solves in the sense of characteristics~:
$$
(\e \dt + \da + \zteps) \rhoe \veps = \rhoe \dt \zeps.
$$
Integrating  with respect to the age variable, and because $\veps(\bfx,0,t)=0$, one has 
$$
\e \dt \int_{\rr} \rhoe \veps da + \int_{\rr} \zteps \rhoe \veps da = \muze \dt \zeps \; .
$$
We recall that $\zeps$ solves~: 
$$
\int_\om \int_{\rr} \rhoe  (\bfx,a,t)\veps (\bfx,a,t)  da \; v(\bfx) d \bfx + \left( \nabla \zeps, \nabla v\right)=0, \quad \forall v\in H^1_0(\om)
$$
for almost every fixed $t\in(0,T)$.
Due to Theorem  \ref{thm.dtz.non.unif}, $(\nabla \zeps,\nabla v)$ is a differentiable function in time for any $v\in H_0^1(\om)$ and thus
$$
\e \dt \int_\om \int_{\rr} \rhoe  (\bfx,a,t)\veps (\bfx,a,t)  da \; v(\bfx) d \bfx + \e \left( \nabla \dt \zeps,\nabla v\right)=0, \quad \forall v\in H^1_0(\om) \; .
$$
This shows that $\dt \zeps$ solves indeed
$$
\int_{\om} \muze \dt \zeps(\bfx,t) v(\bfx) d\bfx + \e \int_{\om} \nabla \dt \zeps \cdot \nabla v d\bfx = \int_\om \left( \int_{\rr} \rhoe \zteps \veps da \right) v(\bfx) d \bfx
$$ 
  for every fixed $t>0$ and any $v \in H^1_0(\om)$.
On the other hand, using Jensen's inequality one has
$$
\left( \int_\rr \zteps \rhoe | \veps | da  \right)^2 \leq \int_{\rr} \zteps \rhoe da \int_{\rr} \zteps \rhoe \veps^2 da \; ,
$$
which integrated in space and time gives 
$$
\begin{aligned}
 \nrm{ \int_{\rr} \rhoe \zteps \veps da }{L^2(Q_T)}^2&  \leq \left( \sup_{(\bfx,t)\in Q_T} \int_{\rr} \zteps \rhoe da \right)\int_0^T \int_\om \int_{\rr} \zteps \rhoe \veps^2 da \, d\bfx \, dt \\
 & \leq \ztmax \int_0^T \int_\om \int_{\rr} \zteps \rhoe \veps^2 da \, d\bfx \, dt \; .
\end{aligned} 
$$
By Lax-Milgram, one has the estimates~:
$$
\nrm{\dt \zeps(\cdot,t)}{L^2(\om)} \leq \frac{1}{\mumin} \nrm{\int_{\rr} \rhoe(\cdot,a,t) \zteps(\cdot,a,t) \veps(\cdot,a,t) da }{L^2(\om)}
$$
for almost every $t \in (0,T)$,
which gives after integration in time  that $\dt \zeps \in L^2 (Q_T)$ uniformly with respect to $\e$. 
\end{proof}

\begin{coro}\label{coro.aubin}
 Under the previous hypotheses, there exists a subsequence $(z_{\e_k})_{k \in \N}$ converging strongly in $C( [0,T];L^2(\Omega))$.
\end{coro}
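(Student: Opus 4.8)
The plan is to invoke an Aubin--Lions--Simon compactness argument, whose two hypotheses have just been secured. First I would collect the uniform-in-$\e$ bounds: by the corollary to Theorem~\ref{thm.nrj} the family $(\zeps)_\e$ is bounded in $X_T=L^\infty((0,T);H^1_0(\om))$, while Theorem~\ref{thm.dt.zeps.l2} bounds $(\dt\zeps)_\e$ in $L^2(\qt)=L^2((0,T);L^2(\om))$, both bounds being independent of $\e$. Since $\om$ is bounded with $C^{1,1}$ boundary, the Rellich--Kondrachov theorem makes the embedding $H^1_0(\om)\hookrightarrow L^2(\om)$ compact, and $L^2(\om)\hookrightarrow L^2(\om)$ is trivially continuous; we are thus in the setting $H^1_0(\om)\hookrightarrow\hookrightarrow L^2(\om)\hookrightarrow L^2(\om)$ required by the Aubin--Lions--Simon lemma.

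In the variant that yields relative compactness in $C([0,T];B)$ rather than merely in $L^2((0,T);B)$, one asks that the family be bounded in $L^\infty((0,T);X)$ with time derivatives bounded in $L^r((0,T);Y)$ for some $r>1$, with $X\hookrightarrow B$ compact and $B\hookrightarrow Y$ continuous. With $X=H^1_0(\om)$, $B=Y=L^2(\om)$ and $r=2$, the two bounds above are exactly these hypotheses, so $(\zeps)_\e$ is relatively compact in $C([0,T];L^2(\om))$, giving a subsequence $(z_{\e_k})_{k\in\N}$ converging strongly in that space.

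For a self-contained realization I would instead run the generalized Arzel\`a--Ascoli theorem for maps into the metric space $L^2(\om)$, using precisely the two estimates. For $0\le t_1\le t_2\le T$, the fundamental theorem of calculus in the Bochner sense together with Cauchy--Schwarz gives
$$
\nrm{\zeps(\cdot,t_2)-\zeps(\cdot,t_1)}{L^2(\om)}=\nrm{\int_{t_1}^{t_2}\dt\zeps(\cdot,s)\,ds}{L^2(\om)}\le |t_2-t_1|^{1/2}\,\nrm{\dt\zeps}{L^2(\qt)},
$$
so the family is equi-H\"older-$\tfrac12$ continuous in time, uniformly in $\e$ by Theorem~\ref{thm.dt.zeps.l2}; and for each fixed $t$ the set $\{\zeps(\cdot,t)\}_\e$ is bounded in $H^1_0(\om)$ by the $X_T$ bound, hence precompact in $L^2(\om)$ by Rellich--Kondrachov. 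These are exactly the pointwise relative compactness and uniform equicontinuity required by Arzel\`a--Ascoli, yielding the convergent subsequence in $C([0,T];L^2(\om))$.

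There is essentially no serious obstacle, since the nontrivial work lies in the already-established Theorems~\ref{thm.nrj} and~\ref{thm.dt.zeps.l2}; the only point deserving care is to invoke the correct variant of the compactness lemma, namely the one delivering uniform-in-time convergence in $C([0,T];L^2(\om))$ rather than merely in $L^2((0,T);L^2(\om))$. This stronger conclusion is legitimate precisely because the top bound is $L^\infty$ in time and the derivative bound sits in $L^r$ with $r=2>1$, as reflected in the H\"older-$\tfrac12$ equicontinuity above.
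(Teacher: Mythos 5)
Your proposal is correct and follows essentially the same route as the paper, which likewise combines the uniform $X_T$ bound and the uniform $L^2(\qt)$ bound on $\dt\zeps$ with the compact embedding $H^1_0(\om)\hookrightarrow\hookrightarrow L^2(\om)$ and invokes the Aubin--Lions--Simon theorem in the variant yielding compactness in $C([0,T];L^2(\om))$ (the paper cites Theorem II.5.16, p.~102 of Boyer--Fabrie). Your additional self-contained Arzel\`a--Ascoli realization, via the H\"older-$\tfrac12$ equicontinuity estimate and pointwise Rellich--Kondrachov precompactness, is a sound unpacking of that same theorem rather than a different method.
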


\begin{proof}
 The imbedding $H^1_0(\Omega)$ is compact in $L^2(\Omega)$
which gives by the Lions-Aubin-Simon theorem that there exists a subsequence $(z_{\e_k})_{k \in \N}$ converging strongly in $C( [0,T];L^2(\Omega))$
(cf. Theorem II.5.16 p.102 \cite{FaBo.Book}).
\end{proof}

\subsection{A stability result in the elongation variable}
The problem solved by $\veps$ reads formally~:
\newcommand{\uei}{u_{\e,I}}
\begin{equation}\label{eq.v.eps}
\hspace{-0.54cm}
\left\{
\begin{array}{l l l}
&  (\e \dt + \da) \veps = \dt \zeps(\bfx,t), & (\bfx,a,t) \in \Omega\times\rr\times(0,T) \\
& (\muze -\e \Delta ) \dt\zeps =  \int_{\rr} (\zteps \rhoe \veps)(\bfx,a,t)da, & (\bfx,t) \in \om\times(0,T),\\
& \dt \zeps(\bfx,t) = 0 & (\bfx,t) \in \om\times(0,T), \\ 
 &\veps(\bfx,0,t) = 0 & (\bfx,a,t)  \in \Omega \times \{ a=0\} \times (0,T) \\
 &\veps(\bfx,a,t) =0 &  (\bfx,a,t) \in \dom\times \rr \times (0,T) \\
 &\veps(\bfx,a,0) = \uei (\bfx,a) &  (\bfx,a,t)\in \Omega \times\rr\times   \{t= 0\}
\end{array}
\right.
\end{equation}
where $\uei (\bfx,a):=\frac{ \zeps(\bfx,0)-\zp(\bfx,- \e a) }{\e}$ and $\zeps(\bfx,0)$ solves
\begin{equation}\label{eq.zeps.t.0}
 ( \mu_{0,I}(\bfx)-\e \Delta_\bfx ) z(\bfx,0) =  \int_{0}^\infty \zp(\bfx,-\e a) \rhoi(\bfx,a) da.
\end{equation}
The elliptic problem solved by $\dt\zeps$ in \eqref{eq.v.eps} is to be understood in the variational sense.
This system has to be compared with (2.1) p.5 \cite{MiOel.2}, 
here the inverse of the operator $\left(\muze I - \e \Delta\right)$ appears %on the right hand side in \eqref{eq.v.eps}
as a space contribution. In what follows we show how 
to deal with and extend stability estimates (2.6) p.6 \cite{MiOel.2} in this setting.
\begin{thm}\label{thm.stab.rho.veps}
 Under hypotheses \ref{hypo.data} and \ref{hypo.data.deux}, and if 
 $
 \int_{\rr} \rhoi | \vepsi | da \, d\bfx < \infty,
 $
  one has~:
 $$
 \int_{\om\times\rr} (\rhoe | \veps | )(\bfx,a,t) da d\bfx \leq \int_{\om\times\rr} \rhoi (\bfx,a) | \vepsi(\bfx,a) | da\,  d \bfx \; .
 $$
 Moreover, if $\vepsi$ satisfies
 $$
 \sup_{a\in \rr} \frac{\int_{\Omega} | \vepsi (\bfx,a) | d\bfx }{(1+a)} < \infty,
 $$ 
 then
 $$
 \int_{\Omega} | \veps(\bfx,a,t) | d\bfx \in Y_T:=L^\infty\left(\rr\times(0,T) , \frac{ 1}{1+a} \right)
 $$
 and the bound is uniform with respect to $\e$. 
\end{thm}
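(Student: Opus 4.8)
The plan is to exploit the signed transport structure of the elongation $\veps$ together with a sign-function test of the elliptic equation for $\dt\zeps$; this last device is the genuinely new ingredient compared with the pointwise-in-$\bfx$ inversion available in \cite{MiOel.2}, where no Laplacian was present.

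First I establish an $L^1$ balance. As recalled in the proof of Theorem \ref{thm.dt.zeps.l2}, the product $\rhoe\veps$ solves $(\e\dt+\da+\zteps)\rhoe\veps=\rhoe\dt\zeps$ in the characteristic sense; since $\rhoe\geq0$ by Lemma \ref{prop.rho.bounds}, the quantity $\rhoe|\veps|$ satisfies $(\e\dt+\da+\zteps)\rhoe|\veps|=\rhoe\,\sgn(\veps)\,\dt\zeps$. Integrating over $a\in\rr$ and $\bfx\in\om$, using $\veps(\bfx,0,t)=0$ to cancel the flux at $a=0$ and the exponential decay of $\rhoe$ to cancel it at $a=\infty$, I obtain
$$
\e\ddt{}\int_{\om\times\rr}\rhoe|\veps|\,da\,d\bfx+\int_{\om\times\rr}\zteps\rhoe|\veps|\,da\,d\bfx=\int_{\om}\dt\zeps\left(\int_{\rr}\rhoe\,\sgn(\veps)\,da\right)d\bfx .
$$
Because $|\sgn(\veps)|\leq1$ and $\rhoe\geq0$, the right-hand side is bounded by $\int_{\om}\muze|\dt\zeps|\,d\bfx$.

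The crucial step is to control $\int_{\om}\muze|\dt\zeps|\,d\bfx$ by the dissipation term. Here I test the elliptic equation $(\muze-\e\Delta)\dt\zeps=\int_{\rr}\zteps\rhoe\veps\,da$ from \eqref{eq.v.eps} against a smooth nondecreasing approximation $\sgn_\delta(\dt\zeps)$ of the sign function. Since $\dt\zeps\in H^2(\om)\cap H^1_0(\om)$ by Theorem \ref{thm.dtz.non.unif}, the integration by parts is legitimate and the Laplacian contributes $\e\int_{\om}\sgn_\delta'(\dt\zeps)|\nabla\dt\zeps|^2\,d\bfx\geq0$, the boundary term vanishing by the homogeneous Dirichlet condition. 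Letting $\delta\to0$ yields
$$
\int_{\om}\muze|\dt\zeps|\,d\bfx\leq\int_{\om}\left|\int_{\rr}\zteps\rhoe\veps\,da\right|d\bfx\leq\int_{\om\times\rr}\zteps\rhoe|\veps|\,da\,d\bfx .
$$
Inserting this into the balance gives $\e\ddt{}\int_{\om\times\rr}\rhoe|\veps|\,da\,d\bfx\leq0$, and integration in time delivers the first inequality.

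For the weighted bound I drop the weight $\rhoe$ and set $V(a,t):=\int_{\om}|\veps(\bfx,a,t)|\,d\bfx$. From the first line of \eqref{eq.v.eps}, $|\veps|$ satisfies $(\e\dt+\da)|\veps|\leq|\dt\zeps|$, and integrating in $\bfx$ gives the transport inequality $(\e\dt+\da)V\leq g_\e(t)$ with $g_\e(t):=\nrm{\dt\zeps(\cdot,t)}{L^1(\om)}$. Combining the lower bound $\muze\geq\muzm$ with the two estimates above and the first claim, I get $\muzm\,g_\e(t)\leq\int_{\om}\muze|\dt\zeps|\,d\bfx\leq\ztmax\int_{\om\times\rr}\rhoe|\veps|\,da\,d\bfx\leq\ztmax\int_{\om\times\rr}\rhoi|\vepsi|\,da\,d\bfx$, so $g_\e\leq G$ for a constant $G$ independent of $\e$ and $t$ (the initial integral being $\e$-uniform under the stated hypotheses on $\vepsi$). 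Integrating along the characteristics $\alpha\mapsto(\alpha,\,t-\e(a-\alpha))$ then finishes the argument: when $t\geq\e a$ the characteristic meets $a=0$, where $V=0$ since $\veps(\cdot,0,t)=0$, giving $V(a,t)\leq Ga$; when $t<\e a$ it meets the initial datum at age $a-t/\e$, and using $t/\e<a$ together with the hypothesis $C_I:=\sup_b V(b,0)/(1+b)<\infty$ gives $V(a,t)\leq C_I(1+a)+Ga$. In both regimes $V(a,t)/(1+a)\leq C_I+G$ uniformly in $\e$, $a$ and $t$, which is precisely the claimed membership in $Y_T$.

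The main obstacle is the second step: because the operator $(\muze-\e\Delta)^{-1}$ is nonlocal in $\bfx$, the $L^1$ control of $\dt\zeps$ cannot be read off pointwise as in \cite{MiOel.2}, and the sign-test combined with the positivity of the Laplacian contribution under Dirichlet conditions is what recovers it; everything else is a transport/characteristics computation analogous to the previous works.
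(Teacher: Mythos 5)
Your proof is correct, and its skeleton is the same as the paper's: the $L^1$ balance for $\rhoe|\veps|$ along characteristics, the key inequality $\int_\om \muze|\dt\zeps|\,d\bfx \leq \int_\om\bigl|\int_\rr \zteps\rhoe\veps\,da\bigr|\,d\bfx$, and a transport argument in age for the weighted bound. The genuine difference is how the key inequality is justified. The paper derives $\int_\om \Delta(\dt\zeps)\,\sgn(\dt\zeps)\,d\bfx\leq 0$ from Kato's inequality up to the boundary (Theorem 1.3 of \cite{BrePon}), which needs only $g,\,\Delta g\in L^1(\om)$ and $\ddn{g}\in L^1(\dom)$; you instead test the elliptic equation with a smooth monotone approximation $\sgn_\delta(\dt\zeps)$ and integrate by parts, which is licensed by the $H^2(\om)\cap H^1_0(\om)$ regularity of $\dt\zeps$ from Theorem \ref{thm.dtz.non.unif}. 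That regularity is non-uniform in $\e$, but you use it only qualitatively (to justify the integration by parts), and no constant from it enters the final estimate, so the $\e$-uniformity of the conclusion is unaffected; this is legitimate. What each route buys: yours is elementary and self-contained, avoiding the Brezis--Ponce machinery altogether; the paper's works at lower regularity and is re-used in Lemma \ref{lem.ci.veps.stab}, where the datum only gives $\Delta\zp(\cdot,0)\in L^1(\om)$, so your smooth-test argument would not transfer there. Finally, for the weighted bound you integrate $(\e\dt+\da)V\leq G$ along characteristics explicitly (distinguishing $t\geq\e a$ from $t<\e a$) instead of citing Theorem 6.1 of \cite{MiOel.2}; this is the same content, and your final bound $C_I+G$ is uniform in $\e$ once Lemma \ref{lem.ci.veps.stab} and its companion lemma supply, exactly as in the paper, the $\e$-uniformity of $\int_{\om\times\rr}\rhoi|\vepsi|\,da\,d\bfx$ and of $\sup_{a}\int_\om|\vepsi(\bfx,a)|\,d\bfx/(1+a)$.
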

\begin{proof}
 A simple use of Theorem \ref{thm.nrj}, shows that 
 $$
 \int_{\Omega\times\rr} \zteps \rhoe \veps^2 da d\bfx \leq  \ztmax  \int_{\Omega\times\rr}  \rhoe \veps^2 da d\bfx \leq \frac{\ztmax}{\e} \cE(\zeps(\cdot,t) \leq \frac{\ztmax}{\e} \cE(\zp(\cdot,0)),
 $$
 which, using again Jensen's inequality, implies that
 $$
\int_{\Omega} \left( \int_{\rr} \zteps \rhoe |\veps | da \right)^2 d\bfx \leq  \frac{\ztmax^2}{\e} \cE(\zp(\cdot,0)).
$$
This bound ensures that for fixed $\e$, $f(\bfx,t):=\int_{\rr} \zteps \rhoe \veps  da$ belongs to $L^\infty((0,T);$ $L^2(\Omega))$.
We consider the problem~: for a given $f(\bfx,t)$ find $g(\bfx,t)$ solving
$$
\left\{
\begin{aligned}
& \muze g- \e \Delta g = f,  &  \text{ in } \Omega\;,\\
& g=0,& \text{ on } \dom\;.
\end{aligned}
\right.
$$
For almost every $t\in (0,T)$, one solves this elliptic problem. Thus
there exists a unique $g \in L^\infty((0,T);H^2(\Omega)\cap H^1_0(\Omega))$ by Lax-Milgram and standard elliptic regularity. These 
considerations allow to fulfill hypotheses of the main theorem in \cite{BrePon}, namely for a.e. $t \in (0,T)$,
$g(\cdot,t) \in L^1(\Omega)$, $\Delta g (\cdot,t)  \in L^1(\Omega)$ and $\ddn g (\cdot,t) \in L^1(\dom)$ which ensures that $g(\cdot,t) \in {\mathbb X}$
where 
$$
{\mathbb X}:=\left\{ u \in W^{1,1}(\Omega) \; \st \; \left| \int \nabla u\cdot \nabla \psi d\bfx \right| < C \nrm{\psi}{L^\infty(\Omega)} \; \forall \psi \in C^1(\ov{\Omega})\right\}
$$
and thus a {\em Green's inequality} holds (cf. Theorem 1.3, \cite{BrePon})~:
$$
\int_{\Omega} \nabla g^+  \cdot \nabla \psi d\bfx \leq \int_{\dom} H \psi  - \int_{\Omega} G \psi,\quad \forall \psi \in C^1(\ov{\om}) \;, \quad \psi \geq 0,
$$
where $g^+$ denotes the positive part of $g$ and $G \in L^1(\Omega)$ and $H\in L^1(\dom)$ are given by~:
$$
G:= 
\begin{cases}
 \Delta g&  \text{ on } \{ g>0 \} \\
 0 & \text{ on } \{ g\leq 0 \} \\
\end{cases}, \quad 
H := 
\begin{cases}
 \ddn g & \text{ on } \{ g >0\} \;, \\
 0 & \text{ on } \{ g<0 \} \; , \\
 \min \left( \ddn g,0\right) & \text{ on } \{ g =0 \} \; .
\end{cases}
 $$
Applying the latter result to $|g|:=g^+-g^-$, since $g$ vanishes on the boundary, one obtains that % gives that
$$
\int_{\Omega} \; \Delta g \; \sgn g \; d \bfx \leq 0.
$$
Returning to \eqref{eq.v.eps}, one has 
$$ 
\e \dt \veps + \da \veps = g \; ,
$$
where we set $g:=\dt \zeps$.
In the sense of characteristics, one establishes, after integration with respect to age~:
$$
\e \dt \int_{\rr} \rhoe | \veps | da + \int_{\rr} \zteps \rhoe | \veps | da \leq \muze | g | \; .
$$
Integrating  in space, one obtains 
$$
\e \ddt{} \int_{\Omega\times\rr} \rhoe | \veps | da d\bfx + \int_{\Omega\times\rr} \zteps \rhoe | \veps | da d \bfx \leq \int_{\Omega} \muze | g | d \bfx \; .
$$
But then
$$
\int_{\Omega} \muze | g | d \bfx = \int_{\Omega} \left( \int_{\rr} \zteps \rhoe \veps da \right)\sgn g \, d\bfx + \e \int_{\Omega}   \Delta g\; \sgn g\, d\bfx \leq \int_{\Omega \times \rr} \zteps \rhoe \left| \veps \right| da \, d\bfx.
$$
This leads to 
$$
\e \ddt{} \int_{\Omega\times\rr} \rhoe | \veps | da \, d\bfx \leq 0 \; ,
$$
which, after integration in time, proves the first result. 
Then, one has that $q(a,t):=\int_{\Omega} | \veps | d \bfx$ solves 
$$
\begin{aligned}
 \e \dt q + \da q & \leq \frac{1}{\mumin} \int_{\Omega} \muze |v| d\bfx\leq \frac{1}{\mumin}  \int_{\Omega \times \rr} \zteps \rhoe \left| \veps \right| da d\bfx \\
 & \leq \frac{\ztmax}{\mumin} \int_{\rr} \rhoi|\vepsi| da d\bfx < C \; .
\end{aligned}
$$
Applying then the same results as in Theorem 6.1 \cite{MiOel.2}, one concludes that $q \in Y_T$.
\end{proof}

It remains to show that %under hypotheses \ref{hypo.data.trois}, 
the assumptions of theorem \ref{thm.stab.rho.veps}
are fulfilled. This is the scope of next two lemmas.
\begin{lem}\label{lem.ci.veps.stab}
 Under assumptions \ref{hypo.data.trois} it holds that~:
 $$
 J:= \int_{\Omega\times \rr} \rhoi | \vepsi | da d \bfx < C \; ,
 $$
 where the generic constant C is finite and  independent on $\e$.
\end{lem}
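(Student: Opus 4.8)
The plan is to split the initial elongation according to whether it is controlled by the Lipschitz regularity of the past datum $\zp$ or by the distance between the elliptic solution $\zeps(\cdot,0)$ and $\zp(\cdot,0)$. Recalling from \eqref{eq.v.eps} that $\vepsi(\bfx,a)=\e^{-1}(\zeps(\bfx,0)-\zp(\bfx,-\e a))$, I would insert $\zp(\bfx,0)$ and write
\[
\vepsi(\bfx,a)=\frac{\zeps(\bfx,0)-\zp(\bfx,0)}{\e}+\frac{\zp(\bfx,0)-\zp(\bfx,-\e a)}{\e}=:\frac{\phi(\bfx)}{\e}+\psi_\e(\bfx,a),
\]
so that $J$ is bounded by the sum of the two corresponding integrals. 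The first summand is age-independent, so integrating $\rhoi$ against it produces the zeroth moment $\mu_{0,I}(\bfx)=\int_\rr\rhoi\,da$.

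For the $\psi_\e$ piece I would invoke hypothesis \ref{hypo.data.trois} $ii)$, which gives the pointwise bound $|\zp(\bfx,0)-\zp(\bfx,-\e a)|\leq \e a\,C_{\zp}(\bfx)$ and hence $|\psi_\e(\bfx,a)|\leq a\,C_{\zp}(\bfx)$. Integrating against $\rhoi$ and using $\int_\rr a\,\rhoi\,da=\mu_{1,I}$ reduces this contribution to $\int_\Omega C_{\zp}(\bfx)\,\mu_{1,I}(\bfx)\,d\bfx$, which by Cauchy--Schwarz is at most $\|C_{\zp}\|_{L^2(\Omega)}\,\|\mu_{1,I}\|_{L^2(\Omega)}$. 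Since $C_{\zp}\in L^2(\Omega)$ and $\mu_{1,I}\leq c_1$ is bounded (assumptions \ref{hypo.data.trois} $ii)$ and \ref{hypo.data.deux}), this is finite and independent of $\e$.

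The substantial part is the $\phi/\e$ piece, namely $\int_\Omega \mu_{0,I}(\bfx)\,|\phi(\bfx)|/\e\,d\bfx$. Here I would derive an elliptic equation for $\phi$: applying $(\mu_{0,I}-\e\Delta)$ to $\phi$ and using \eqref{eq.zeps.t.0} together with $\mu_{0,I}\zp(\bfx,0)=\int_\rr\zp(\bfx,0)\rhoi\,da$ gives
\[
\mu_{0,I}\,\phi-\e\Delta\phi=\int_\rr\big(\zp(\bfx,-\e a)-\zp(\bfx,0)\big)\rhoi(\bfx,a)\,da+\e\,\Delta\zp(\bfx,0)=:R_\e.
\]
Multiplying by $\sgn\phi$, integrating over $\Omega$ and invoking the Green/Kato inequality $\int_\Omega \Delta\phi\,\sgn\phi\,d\bfx\leq 0$ exactly as for $g$ in the proof of Theorem \ref{thm.stab.rho.veps}, the Laplacian term drops out and yields $\int_\Omega\mu_{0,I}|\phi|\,d\bfx\leq\|R_\e\|_{L^1(\Omega)}$. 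The decisive point is that $R_\e$ carries a full power of $\e$: the same Lipschitz bound gives $|R_\e|\leq \e\,C_{\zp}(\bfx)\mu_{1,I}(\bfx)+\e\,|\Delta\zp(\bfx,0)|$, whence $\|R_\e\|_{L^1(\Omega)}\leq \e\big(\|C_{\zp}\|_{L^2(\Omega)}\|\mu_{1,I}\|_{L^2(\Omega)}+\|\Delta\zp(\cdot,0)\|_{L^1(\Omega)}\big)$. Dividing by $\e$ cancels this factor and leaves an $\e$-independent bound for $\int_\Omega \mu_{0,I}|\phi|/\e\,d\bfx$.

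Summing the two contributions gives $J<C$ uniformly in $\e$. The step I expect to be the main obstacle is the rigorous justification of the sign test $\int_\Omega\Delta\phi\,\sgn\phi\,d\bfx\leq 0$, which requires $\phi$ to belong to the admissible class $\mathbb{X}$ of Theorem 1.3 in \cite{BrePon}. I would check this through the $H^2$-regularity of $\zeps(\cdot,0)$ (elliptic regularity applied to \eqref{eq.zeps.t.0}, whose right-hand side is $L^2(\Omega)$ by Lemma \ref{lem.ci}) together with $\Delta\zp(\cdot,0)\in L^1(\Omega)$ from hypothesis \ref{hypo.data.trois} $i)b)$, so that $\Delta\phi\in L^1(\Omega)$ while $\phi\in H^1_0(\Omega)$ vanishes on $\dom$; this reproduces the step already carried out in Theorem \ref{thm.stab.rho.veps}. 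Everything else is a routine combination of the Lipschitz bound, Cauchy--Schwarz and the moment bounds of assumption \ref{hypo.data.deux}.
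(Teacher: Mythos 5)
Your proof is correct and follows essentially the same route as the paper: the identical triangle-inequality split, the same elliptic equation $(\mu_{0,I}-\e\Delta)(\zeps(\cdot,0)-\zp(\cdot,0))=R_\e$ with $\|R_\e\|_{L^1(\om)}=O(\e)$, and the same Brezis--Ponce/Kato inequality to discard the Laplacian and obtain the $L^1$ bound on $\mu_{0,I}|\zeps(\cdot,0)-\zp(\cdot,0)|/\e$. The only cosmetic difference is the verification that the difference lies in ${\mathbb X}$: you go through $H^2$-regularity of $\zeps(\cdot,0)$, while the paper invokes Proposition 4.2 of \cite{BrePon} (right-hand side in $H^{-1}(\om)\cap L^1(\om)$ plus $W^{1,1}_0(\om)$ membership), both of which are legitimate.
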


\begin{proof}
 A triangle inequality gives~:
 $$
 J \leq \int_{\Omega} \frac{\left| \zeps(\bfx,0)-\zp(\bfx,0)\right|}{\e} \mu_{0,I} (\bfx)\; d\bfx + \int_{\Omega\times\rr} \frac{\left| \zp(\bfx,0)-\zp(\bfx,-\e a)\right|}{\e} \rhoi (\bfx,a)da \,d\bfx %=: J_1 + J_2
 \; .
 $$
By similar arguments as above, one considers the problem solved by $\hz(\bfx,0):=\zeps(\bfx,0)-\zp(\bfx,0)$~:
 $$
 \mu \hz(\bfx,0) -\e \Delta \hz(\bfx,0) = -\int_{\rr} \left(\zp(\bfx,0)-\zp(\bfx,-\e a) \right) \rhoi (\bfx,a) da + \e \Delta \zp(\bfx,0) .
 $$
 Since $\zp(\cdot,0)$ is in $\bhoz$, the right hand side is in $H^{-1}(\om)$, thus 
 by Lax-Milgram, $\hz(\bfx,0) \in \bhoz\subset W^{1,1}_0(\om)$.%, and the previous equation holds in the sense of distributions, 
 Moreover since the right hand side is in $L^1_\bfx$ as well, 
%Again, thanks to  elliptic regularity results and as $\Delta \zp(\bfx,0) \in L^1(\Omega)$, 
one fulfills  the 
 hypotheses of Proposition 4.2 \cite{BrePon}  which shows that $\hz(\bfx,0) \in {\mathbb X}$ and 
  $\ddn \hz(\cdot,0) \in L^1(\dom)$. Again  Theorem 1.3 \cite{BrePon} applies and one obtains that
 $$
\nrm{\mu_{0,I}  \hz(\cdot,0) }{L^1_\bfx} \leq \nrm{\int_{\rr} \left(\zp(\bfx,0)-\zp(\bfx,-\e a) \right) \rhoi (\bfx,a) da }{L^1_\bfx} + \e \nrm{\Delta\zp(\cdot,0)}{L^1_\bfx} \; ,
$$
which together with the Lipschitz-like  assumption \ref{hypo.data.trois} $(ii)$   ends the proof.
\end{proof}

\begin{lem}
 Under assumptions \ref{hypo.data.trois}, one has also that the second requirement on $\vepsi$ holds~:
 %$$
 $$
 %K:=
 \sup_{a\in \rr} \frac{\int_{\Omega} | \vepsi (\bfx,a) | d\bfx }{(1+a)} < C,
 $$ 
 where the generic constant is independent on $\e$.
\end{lem}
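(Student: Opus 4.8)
The plan is to reuse the decomposition already exploited in the proof of Lemma~\ref{lem.ci.veps.stab}, but now keeping track of the dependence on the age variable $a$ as well as on $\e$. Since $\vepsi(\bfx,a) = (\zeps(\bfx,0)-\zp(\bfx,-\e a))/\e$, a single triangle inequality separates the $a$-independent ``fixed-point'' contribution from a purely temporal increment of the past datum:
\begin{equation*}
\int_{\Omega} | \vepsi(\bfx,a) | \, d\bfx \leq \int_{\Omega} \frac{ | \zeps(\bfx,0)-\zp(\bfx,0) | }{\e} \, d\bfx + \int_{\Omega} \frac{ | \zp(\bfx,0)-\zp(\bfx,-\e a) | }{\e} \, d\bfx.
\end{equation*}
I would then bound the two pieces separately and check that their sum grows at most linearly in $a$, uniformly in $\e$.

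For the second term I would invoke the Lipschitz-in-time bound of assumption~\ref{hypo.data.trois}~$(ii)$, which gives $| \zp(\bfx,0)-\zp(\bfx,-\e a) | \leq C_{\zp}(\bfx)\,\e a$. After dividing by $\e$ and integrating over the bounded domain $\Omega$ (on which $C_{\zp}\in L^2(\Omega)\subset L^1(\Omega)$), this term is controlled by $a \, \nrm{C_{\zp}}{L^1(\Omega)}$, i.e.\ it grows linearly in $a$ with a constant independent of $\e$.

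The first term is independent of $a$, and this is exactly where the previous lemma pays off. Writing $\hz(\cdot,0) = \zeps(\cdot,0)-\zp(\cdot,0)$, the proof of Lemma~\ref{lem.ci.veps.stab} already establishes, through the Brezis--Ponce Green's inequality, the estimate $\nrm{\mu_{0,I}\,\hz(\cdot,0)}{L^1(\Omega)} \leq \e\, C$, the decisive feature being that the right-hand side of the elliptic problem solved by $\hz(\cdot,0)$ is $O(\e)$. Using the uniform positive lower bound $\mu_{0,I}\geq\mumin>0$ furnished by Lemma~\ref{prop.rho.bounds}, I obtain $\frac{1}{\e}\nrm{\hz(\cdot,0)}{L^1(\Omega)} \leq C/\mumin$, a bound independent of $\e$ and trivially of $a$.

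Combining the two estimates yields $\int_{\Omega} | \vepsi(\bfx,a) | \, d\bfx \leq C_1 + C_2\, a$ with $C_1,C_2$ independent of $\e$, so that the quotient $(C_1+C_2 a)/(1+a)$, being a convex combination of $C_1$ and $C_2$, is bounded by $\max(C_1,C_2)$ for all $a\geq 0$; taking the supremum in $a$ gives the claim with a constant independent of $\e$. The only genuinely delicate point is the $\e$-cancellation in the first term --- one must verify that the $O(\e)$ bound on $\hz(\cdot,0)$ survives division by $\e$ --- but this is already secured by Lemma~\ref{lem.ci.veps.stab}, and the rest is a routine triangle-inequality argument.
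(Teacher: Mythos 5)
Your proof is correct and takes essentially the same route as the paper: the identical triangle inequality (not integrated in age), the Lipschitz bound $|\zp(\bfx,0)-\zp(\bfx,-\e a)|\leq C_{\zp}(\bfx)\,\e a$ for the past-data increment, and the $O(\e)$ Brezis--Ponce $L^1$ estimate on $\mu_{0,I}\,(\zeps(\cdot,0)-\zp(\cdot,0))$ from Lemma \ref{lem.ci.veps.stab} combined with the lower bound $\mu_{0,I}\geq\mumin>0$ for the $a$-independent term. The only cosmetic difference is that the paper inserts the weight $\mu_{0,I}/\mumin$ inside the integral instead of dividing by $\mumin$ afterwards, which changes nothing.
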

\begin{proof}
 The same triangle inequality holds but we do not integrate in age~:
$$
\begin{aligned}
 \int_{\Omega}&  | \vepsi| d\bfx  \leq  \int_{\Omega} \frac{\left| \zeps(\bfx,0)-\zp(\bfx,0)\right|}{\e} d\bfx + \int_{\Omega} \frac{\left| \zp(\bfx,0)-\zp(\bfx,-\e a)\right|}{\e}  d\bfx \\
 &  \leq  \int_{\Omega} \mu_{0,I} \frac{\left| \zeps(\bfx,0)-\zp(\bfx,0)\right|}{\e \mumin } d\bfx + a \int_{\Omega} C_{\zp}(\bfx)d \bfx \leq C + a \sqrt{| \Omega|} \nrm{C_{\zp}}{L^2(\Omega)} \; .
\end{aligned}
 $$
 Dividing by $(1+a)$ and taking the supremum on $\rr$ ends the proof.
\end{proof}
\begin{lem}
 Under hypotheses above, one has also that $\nrm{\dt \zeps}{L^\infty_t L^1_\bfx}<\infty$ uniformly in $\e$.
\end{lem}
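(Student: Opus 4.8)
The plan is to read off from the system \eqref{eq.v.eps} the elliptic equation satisfied by $g:=\dt\zeps$, namely $\muze\,g-\e\Delta g=f$ with right-hand side $f(\bfx,t):=\int_{\rr}\zteps\rhoe\veps\,da$, and to test it against $\sgn g$ exactly as in the proof of Theorem~\ref{thm.stab.rho.veps}. First I would recall that in that proof one already shows $f\in L^\infty((0,T);L^2(\Omega))$, so that for almost every $t$ the solution $g(\cdot,t)$ lies in $L^\infty((0,T);H^2(\Omega)\cap H^1_0(\Omega))$ and in particular enjoys the regularity $g(\cdot,t)\in W^{1,1}(\om)$, $\Delta g(\cdot,t)\in L^1(\om)$, $\ddn g(\cdot,t)\in L^1(\dom)$ needed to invoke the Green's inequality of \cite{BrePon}. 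This yields $\int_\om \Delta g\,\sgn g\,d\bfx\le 0$, so that the $\e$-dependent diffusion term carries a favourable sign.

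Testing the elliptic equation against $\sgn g$ then gives, since $g\,\sgn g=|g|$,
$$
\int_\om \muze\,|g|\,d\bfx \;\le\; \int_\om \muze\,|g|\,d\bfx-\e\int_\om \Delta g\,\sgn g\,d\bfx \;=\; \int_\om f\,\sgn g\,d\bfx \;\le\; \int_\om |f|\,d\bfx.
$$
Using the lower bound $\muze\ge\mumin>0$ from Lemma~\ref{prop.rho.bounds} together with $|f|\le\int_{\rr}\zteps\rhoe|\veps|\,da\le \ztmax\int_{\rr}\rhoe|\veps|\,da$, I would obtain
$$
\mumin\int_\om |\dt\zeps|\,d\bfx \;\le\; \ztmax\int_{\om\times\rr}\rhoe\,|\veps|\,da\,d\bfx .
$$

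The right-hand side is controlled uniformly in $\e$ and in $t$: by the first estimate of Theorem~\ref{thm.stab.rho.veps} one has $\int_{\om\times\rr}\rhoe|\veps|\,da\,d\bfx\le\int_{\om\times\rr}\rhoi|\vepsi|\,da\,d\bfx$, and the latter quantity is finite and $\e$-independent by Lemma~\ref{lem.ci.veps.stab}. Taking the supremum over $t\in(0,T)$ then delivers $\nrm{\dt\zeps}{L^\infty_tL^1_\bfx}\le(\ztmax/\mumin)\,C$ uniformly in $\e$, which is the claim.

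The delicate point — and the reason the statement does not merely follow from the $\e$-dependent $H^2$ bound of Theorem~\ref{thm.dtz.non.unif} — is the sign test itself: one must legitimise integrating the equation against the non-$H^1_0$ function $\sgn g$. This is precisely what the Brezis–Ponce Green inequality provides, its applicability resting on the $L^1$-integrability of $\Delta g$ and of the normal trace $\ddn g$, exactly the ingredients already assembled in the proof of Theorem~\ref{thm.stab.rho.veps}. Once this is granted, the $\e$-uniformity is immediate, because the sign test annihilates the singular $\e^{-1}$ scaling that obstructs the direct $L^2$ estimates and reduces everything to the uniform $L^1$-type stability bound on $\rhoe\veps$.
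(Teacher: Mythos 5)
Your proposal is correct and takes essentially the same route as the paper: the paper likewise exploits that $\dt\zeps$ solves $(\muze-\e\Delta)\dt\zeps=\int_\rr\zteps\rhoe\veps\,da$ and applies Theorem 1.3 of \cite{BrePon} (the Green/sign-test inequality, legitimised by the $L^2$ regularity of $\Delta\dt\zeps$ from Theorem~\ref{thm.dtz.non.unif}) to get $\int_\om\muze|\dt\zeps|\,d\bfx\le\int_\om\bigl|\int_\rr(\zteps\rhoe\veps)\,da\bigr|\,d\bfx$, then concludes uniformly in $\e$ via the stability bound of Theorem~\ref{thm.stab.rho.veps} and Lemma~\ref{lem.ci.veps.stab}. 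You merely unfold the Brezis--Ponce mechanics that the paper had already assembled in the proof of Theorem~\ref{thm.stab.rho.veps}, so the arguments coincide.
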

\begin{proof}
 By Theorem \ref{thm.stab.rho.veps} and the hypotheses on $\zteps$ one has that
 $\int_{\om\times \rr} \rhoe \zteps \veps \, da \, d\bfx \in C([0,T])$. Since $\Delta\dt \zeps$
 belongs for almost every $t\in(0,T)$ to $L^2(\om)$ by Theorem \ref{thm.dtz.non.unif}, 
 we satisfy the hypotheses of Theorem 1.3, \cite{BrePon} and we conclude that
 $$
\int_\om \muze(\bfx,t) | \dt \zeps(\bfx,t)  | d\bfx \leq  \int_\om \left| \int_\rr (\rhoe \zteps \veps)(\bfx,a,t) da \right| d \bfx \; .
$$
Finally, taking the ess-sup in time,  one concludes the proof.
\end{proof}

\section{Convergence when $\e$ goes to zero}%[Convergence to the instantaneous case]
\label{sec.conv}

Since the system \eqref{eq.rho.eps}-\eqref{eq.z.eps} is weakly coupled,
and  the space variable $\bfx$ is a mute parameter for the density 
of linkages $\rhoe$, the convergence results from
the previous articles are adapted and attention is
paid only on the order of functional spaces with respect to $\bfx$, $a$
and $t$ in section \ref{sec.conv.rho}. Then in section \ref{sec.conv.z}, we present the
main result of the first part of the paper.

\subsection[Convergence of linkages]{Convergence of $\rhoe$}\label{sec.conv.rho}

Concerning the  convergence of  $\rhoe$, we recall the Lyapunov functional, \cite{MiOel.1}~:
\begin{equation}
\label{defliapunovfunc} 
\mathcal{H}[u(\bfx,\cdot)]:=  \left| \int_0^\infty  u(\bfx,a) \; da \right|  +\int_0^\infty | u(\bfx,a) | \; da\; ,
\end{equation}
for every $a$-measurable function $u(\bfx,\cdot)$.
Consider the difference $\hrhoe := \rhoe-\rho_0$. A formal computation using \eqref{eq.rho.eps} and \eqref{eq.rho.zero} implies that it satisfies
\begin{equation}\label{equ_rhohat_residual} 
\left\{ 
\begin{aligned} 
&\varepsilon \dt \hrhoe + \da \hrhoe + \zeta_\varepsilon (\bfx,a,t) \hrhoe = \mathcal{R}_{\varepsilon,\bfx} \;,& a > 0 \, , \; t > 0\; ,\\
& \hrhoe(\bfx,a=0,t)=-\beta_{\varepsilon}(\bfx,t) \int_0^\infty\hrhoe(\bfx,\tilde a, t) \; d\tilde a + \mathcal{M}_{\varepsilon,\bfx} \; ,&\quad t>0\;, \\
& \hrhoe(\bfx,a,t=0)= \rho_{\varepsilon,I}(\bfx,a)-\rho_{0}(\bfx,a,0) \; ,& \quad a\geq0 \;,
\end{aligned}  
\right. 
\end{equation}
with $\mathcal{R}_{\varepsilon}(\xat):=-\varepsilon  \partial_t \rho_0(\bfx,a,t)-\rho_0(\bfx,a,t)(\zteps(\bfx,a,t)-\zeta_0(\bfx,a,t))$ and
$ \mathcal{M}_{\varepsilon}(\bfx,t):=(\beps(\bfx,t)-\beta_0(\bfx,t))$ $\left(1-\int_0^\infty \right.$ $\left.\rho_0(\bfx,a,t) \, da \right)$.

\begin{lem}
\label{lem_convergence_rho}
According to assumptions~\ref{hypo.data}, one has~:
\begin{displaymath}
\mathcal{H}[\hrhoe(\bfx,\cdot,t)]   \leq 
\mathcal{H}[\rho_{\varepsilon,I}(\bfx,\cdot)-\rho_{0}(\bfx,\cdot,0)]   e^{\frac {-\ztmin  t }{\varepsilon}}   +
\frac{2}{\ztmin} \left\{ 
% \left\|
%\left\|  \mathcal{R}_{\varepsilon}\right\|_{L^1_{\bfx,a}}+|\mathcal{M}_{\varepsilon,\bfx}|
%\right\|_{L_t^\infty(\mathbb{R}_+)}  
\nrm{\mathcal{R}_{\varepsilon}}{L^\infty_{\xat}}+\nrm{\cM_\e}{L^\infty_{\bfx,t}}
\right\}
\end{displaymath}
for all $t \geq 0$ and a.e. $\bfx \in \Omega$.
\end{lem}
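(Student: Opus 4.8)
The plan is to establish a Gronwall-type estimate for the Lyapunov functional $\mathcal{H}$ applied to the residual $\hrhoe$, exploiting the structure of the linear transport problem \eqref{equ_rhohat_residual}. The key point is that $\mathcal{H}$ was designed precisely so that its evolution along the characteristics of \eqref{eq.rho.eps} decays at rate $\ztmin/\e$, while the boundary renewal term contributes a contraction (because the on-rate factor multiplies $\int_0^\infty \hrhoe\, d\tilde a$, which is controlled by the second part of $\mathcal{H}$). Since $\bfx$ enters \eqref{equ_rhohat_residual} only as a parameter, I would fix $\bfx \in \Omega$ and work pointwise in the space variable throughout, reducing everything to the one-dimensional renewal estimate already developed in \cite{MiOel.1}.

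First I would differentiate the two pieces of $\mathcal{H}[\hrhoe(\bfx,\cdot,t)]$ in time. For the $L^1_a$-part, I multiply the transport equation by $\sgn(\hrhoe)$ and integrate in age: using $\e\dt|\hrhoe| + \da|\hrhoe| + \zteps|\hrhoe| \leq |\mathcal{R}_\e|$ in the sense of characteristics and integrating over $a\in\rr$, the flux term yields the boundary value $-|\hrhoe(\bfx,0,t)|$, the damping term is bounded below using $\zteps \geq \ztmin$, and the right-hand side contributes $\nrm{\mathcal{R}_\e}{L^\infty_{\xat}}/\e$ after integrating the fast-decaying weight. For the $\left|\int_0^\infty \hrhoe\, da\right|$-part, I integrate the equation itself in age and take absolute values; here the boundary term at $a=0$ equals $-\beps\int_0^\infty\hrhoe\,d\tilde a + \cM_\e$, which after taking signs and using $\beps\geq 0$ is dominated by the $L^1_a$-norm of $\hrhoe$ plus $\nrm{\cM_\e}{L^\infty_{\bfx,t}}$. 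Adding the two inequalities, the boundary contributions are arranged so that the positive $|\hrhoe(\bfx,0,t)|$ from one piece cancels (or over-dominates) the renewal term from the other, leaving the clean differential inequality
$$
\e\,\frac{d}{dt}\,\mathcal{H}[\hrhoe(\bfx,\cdot,t)] \leq -\ztmin\,\mathcal{H}[\hrhoe(\bfx,\cdot,t)] + 2\left(\nrm{\mathcal{R}_\e}{L^\infty_{\xat}}+\nrm{\cM_\e}{L^\infty_{\bfx,t}}\right).
$$

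I would then integrate this scalar ODE inequality by Gronwall's lemma with the integrating factor $e^{\ztmin t/\e}$, which produces the announced decay $e^{-\ztmin t/\e}$ on the initial datum $\mathcal{H}[\rho_{\e,I}(\bfx,\cdot)-\rho_0(\bfx,\cdot,0)]$ and the equilibrium bound $\tfrac{2}{\ztmin}(\nrm{\mathcal{R}_\e}{L^\infty_{\xat}}+\nrm{\cM_\e}{L^\infty_{\bfx,t}})$ for the source terms. The main obstacle I anticipate is the bookkeeping of the boundary terms at $a=0$: one must verify carefully that the renewal contribution coming from the zero-order-moment part of $\mathcal{H}$ is genuinely absorbed by the flux term surviving in the $L^1_a$-part, rather than merely bounded, so that no spurious factor of $\beps$ (which is only bounded, not small) appears in the final coefficient. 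This absorption is exactly the comparison-principle mechanism of \cite{MiOel.1}, and since the transport structure is unchanged by the presence of $\bfx$ as a parameter, that argument carries over verbatim once the residual source $\mathcal{R}_\e$ is controlled in $L^\infty_{\xat}$, which follows from the regularity of $\rho_0$ established in Lemma \ref{lem.rhoz.est}.
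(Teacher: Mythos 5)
Your plan follows the route the paper itself intends: Lemma \ref{lem_convergence_rho} is stated here without proof, being the space-parametrized adaptation of the Lyapunov-functional argument of \cite{MiOel.1}, and your scheme --- differential inequalities for the two pieces of $\mathcal{H}$, cancellation of the renewal terms at $a=0$, Gronwall with integrating factor $e^{\ztmin t/\e}$, with $\bfx$ frozen as a parameter --- is exactly that argument. However, two steps are asserted rather than proved, and one of them is false in the form you state it.

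First, the source bookkeeping. Your claimed inequality
\begin{equation*}
\e\,\frac{d}{dt}\,\mathcal{H}[\hrhoe(\bfx,\cdot,t)] \leq -\ztmin\,\mathcal{H}[\hrhoe(\bfx,\cdot,t)] + 2\left(\nrm{\mathcal{R}_{\varepsilon}}{L^\infty_{\xat}}+\nrm{\cM_\e}{L^\infty_{\bfx,t}}\right)
\end{equation*}
is not what the computation yields: integrating the transport inequality for $|\hrhoe|$ (and the moment equation) over the \emph{infinite} age interval leaves the term $2\int_0^\infty|\mathcal{R}_{\varepsilon}(\bfx,a,t)|\,da$, which is not dominated by $2\nrm{\mathcal{R}_{\varepsilon}}{L^\infty_{\xat}}$ for a merely bounded source. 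Your phrase ``after integrating the fast-decaying weight'' gestures at the repair: since $\mathcal{R}_{\varepsilon}=-\e\dt\rhoz-\rhoz(\zteps-\ztz)$, Lemma \ref{lem.rhoz.est} gives $|\mathcal{R}_{\varepsilon}|\leq C\bigl(\e(1+a)+\nrm{\zteps-\ztz}{L^\infty_{\xat}}\bigr)e^{-\ztmin a}$, so $\sup_{\bfx,t}\int_0^\infty|\mathcal{R}_{\varepsilon}|\,da$ is finite and $o(1)$ as $\e\to0$. But then the constant produced by Gronwall is $\frac{2}{\ztmin}\sup_{\bfx,t}\int_0^\infty|\mathcal{R}_{\varepsilon}(\bfx,a,t)|\,da$, not literally $\frac{2}{\ztmin}\nrm{\mathcal{R}_{\varepsilon}}{L^\infty_{\xat}}$; you must either insert explicitly the weighted bound converting the $L^1_a$ integral into a sup norm (at the cost of a different $\ztmin$-dependence in the constant), or prove the lemma with the $L^\infty_{\bfx,t}L^1_a$ norm of $\mathcal{R}_{\varepsilon}$, which is all that the convergence proof of Theorem \ref{thm.cvg} actually consumes.

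Second, the decay rate $\ztmin$ on the whole of $\mathcal{H}$ requires a cancellation you do not mention. Setting $m:=\int_0^\infty\hrhoe\,da$ and $n:=\int_0^\infty|\hrhoe|\,da$, the renewal term in the $m$-equation is harmless: multiplying by $\sgn(m)$ gives $-\beps|m|+\sgn(m)\cM_\e$, and this \emph{negative} term is what absorbs the flux gain $|\hrhoe(\bfx,0,t)|\leq\beps|m|+|\cM_\e|$ appearing in the $n$-equation --- this is the absorption you correctly anticipate in your last paragraph. The step you overlook is the sign-indefinite reaction term $-\sgn(m)\int_0^\infty\zteps\hrhoe\,da$ in the $m$-equation: bounding it crudely by $\ztmax\, n$ ruins the sum whenever $\ztmax>\ztmin$, since the $n$-equation only supplies dissipation $-\ztmin n$ after your ``bounded below'' step. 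One must instead split $\zteps=\ztmin+(\zteps-\ztmin)$, estimate the indefinite term by $-\ztmin|m|+\int_0^\infty(\zteps-\ztmin)|\hrhoe|\,da$, and keep the dissipation in the $n$-equation in the exact form $-\ztmin n-\int_0^\infty(\zteps-\ztmin)|\hrhoe|\,da$, so that the $(\zteps-\ztmin)$-contributions cancel identically and leave $-\ztmin(|m|+n)=-\ztmin\mathcal{H}$. This exchange is the reason the functional contains both $|m|$ and $n$, and it is the one non-routine algebraic point of the proof; with it, and with the corrected source norm above, your Gronwall step closes as announced, uniformly in the parameter $\bfx$.
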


Using the method of characteristics one can also write pointwise estimates~:
\begin{lem}\label{lem.duh.hrho}
 One can estimate the difference $\hrhoe$ locally with respect to  $(\bfx,a,t)$~:
 $$
\left| \hrhoe (\bfx,a,t) \right| \lesssim  
\begin{cases}
 \bmax \exp\left(  - \frac{ \ztmin t}{\e} \right) \nrm{\hrhoi(\bfx,\cdot)}{L^1_a} %& \text{ } \\
   +  (1+a)^2\exp(-\ztmin a) & \text{ if } t \geq \e a \\
 | \hrhoi \left(\bfx,a-\frac{t}{\e}\right) | \exp\left(  - \frac{ \ztmin t}{\e} \right) %& \text{ } \\
   +  (1+a)^2 \exp\left( - a \ztmin \right) & \text{ otherwise} 
  \end{cases}
$$
for almost every $\bfx \in \Omega$ and $L^1_a:=L^1(\rr)$.
\end{lem}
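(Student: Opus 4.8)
The plan is to integrate the linear transport problem \eqref{equ_rhohat_residual} along the characteristics of the operator $\e\dt+\da$, exactly as was done for $\rhoe$ itself in the representation \eqref{rho_model_by_characteristics}. These characteristics are the lines $a-t/\e=\text{const}$, and along the one through $(a,t)$ the quantity $g(\tilde a):=\hrhoe(\bfx,\tilde a,t-\e(a-\tilde a))$ solves the scalar linear ODE $\frac{d}{d\tilde a}g=-\zteps\,g+\mathcal{R}_\varepsilon$. Solving this with the integrating factor $\exp(\int_0^{\tilde a}\zteps\,ds)$ gives a Duhamel formula whose base point is the boundary $\{a=0\}$ at time $t-\e a$ when $t\geq\e a$, and the initial line $\{t=0\}$ at age $a-t/\e$ when $t<\e a$. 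This dichotomy is precisely the two cases of the statement, so first I would write down these two explicit representations of $\hrhoe(\bfx,a,t)$.

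Second, I would estimate the propagator and the source term. Since $\zteps\geq\ztmin>0$ by assumptions \ref{hypo.data}, every factor $\exp(-\int_{\tilde a}^{a}\zteps\,ds)$ is bounded by $\exp(-\ztmin(a-\tilde a))$, and $\exp(-\int_0^a\zteps\,ds)\leq\exp(-\ztmin a)$. For the residual, Lemma \ref{lem.rhoz.est} yields $\rhoz\leq C\exp(-\ztmin a)$ and $|\dt\rhoz|\leq C(1+a)\exp(-\ztmin a)$, whence $|\mathcal{R}_\varepsilon(\bfx,\tilde a,\cdot)|\lesssim(\e+\|\zteps-\ztz\|_\infty)(1+\tilde a)\exp(-\ztmin\tilde a)$. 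The key computation is that in $\int_0^{a}|\mathcal{R}_\varepsilon|\,\exp(-\ztmin(a-\tilde a))\,d\tilde a$ the two exponentials combine to factor out $\exp(-\ztmin a)$, leaving the polynomial integral $\int_0^a(1+\tilde a)\,d\tilde a\lesssim(1+a)^2$; this is exactly where the term $(1+a)^2\exp(-\ztmin a)$ comes from. The identical calculation over $[a-t/\e,a]$ handles the regime $t<\e a$, where the initial contribution also produces the term $|\hrhoi(\bfx,a-t/\e)|\exp(-\ztmin t/\e)$ because $\exp(-\int_{a-t/\e}^{a}\zteps\,ds)\leq\exp(-\ztmin t/\e)$.

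The remaining piece, and the main obstacle, is the boundary term in the case $t\geq\e a$. There the base value is $g(0)=\hrhoe(\bfx,0,t-\e a)$, which through the nonlocal boundary condition in \eqref{equ_rhohat_residual} equals $-\beps\int_{\rr}\hrhoe(\bfx,\tilde a,t-\e a)\,d\tilde a+\mathcal{M}_\varepsilon$. Because this base value is not a pointwise datum but a full age-average, the estimate cannot be closed along a single characteristic and must be supplied with the already established global control. I would bound $|\int_{\rr}\hrhoe\,d\tilde a|$ at time $s:=t-\e a$ by the Lyapunov functional $\mathcal{H}[\hrhoe(\bfx,\cdot,s)]$ and invoke Lemma \ref{lem_convergence_rho}, which gives $\mathcal{H}[\hrhoe(\bfx,\cdot,s)]\leq\mathcal{H}[\hrhoi]\exp(-\ztmin s/\e)+\tfrac{2}{\ztmin}\{\|\mathcal{R}_\varepsilon\|_\infty+\|\mathcal{M}_\varepsilon\|_\infty\}$. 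The delicate bookkeeping is that the Lyapunov decay $\exp(-\ztmin(t-\e a)/\e)$ multiplies the propagator factor $\exp(-\ztmin a)$ and telescopes exactly to $\exp(-\ztmin t/\e)$; together with $\beps\leq\bmax$ and $\mathcal{H}[\hrhoi]\lesssim\|\hrhoi(\bfx,\cdot)\|_{L^1_a}$ this produces the first term of the claimed bound. The residual part of the Lyapunov estimate, as well as $\mathcal{M}_\varepsilon$, is absorbed into the $(1+a)^2\exp(-\ztmin a)$ term and the generic constant, which completes both regimes.
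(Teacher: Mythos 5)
Your proposal is correct and takes essentially the same route as the paper's proof: Duhamel's formula along the characteristics of $\e \dt + \da$, the decay bounds of Lemma \ref{lem.rhoz.est} on the residual $\mathcal{R}_\varepsilon$ yielding the $(1+a)^2\exp(-\ztmin a)$ term, and closure of the nonlocal boundary value $\hrhoe(\bfx,0,t-\e a)$ via the Lyapunov estimate of Lemma \ref{lem_convergence_rho}, including the telescoping $\exp(-\ztmin(t-\e a)/\e)\exp(-\ztmin a)=\exp(-\ztmin t/\e)$. No gaps.
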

\begin{proof}
 We use  Duhamel's formula and write~:
\begin{itemize}
 \item if $t \geq \e a$, 
$$
\begin{aligned}
| \hrhoe (\bfx,a,t)| \leq & | \hrhoe(\bfx,0,t-\e a) | \exp( -\ztmin a) \\
& + \int_0^a \exp( - \ztmin (a-s) ) \left| \cR_{\e,\bfx} (\bfx,s,t + \e ( s -a ) ) \right|ds  \\
\leq &  | \hrhoe(\bfx,0,t-\e a) | \exp( -\ztmin a) \\
& + o(1) \int_0^a \exp( - \ztmin (a-s) )(1+s) \exp( - \ztmin s ) ds  \\ 
\leq & \left\{  | \hrhoe(\bfx,0,t-\e a) | + o(1) (1+a)^2 \right\}  \exp(-\ztmin a)
\end{aligned}
$$
where we used Lemma \ref{lem.rhoz.est} in the integral part of the right hand side.
Then, thanks to Lemma \ref{lem_convergence_rho}, the first term can be estimated as
$$
\begin{aligned}
 | \hrhoe(\bfx,0,t-\e a) | & \exp( -\ztmin a) \leq \left(\bmax | \hmu(\bfx,t-\e a) | + \left| \cM_{\e,\bfx} \right| \right)\exp( -\ztmin a)\\
\end{aligned}
$$ 
which using again Lemma \ref{lem_convergence_rho} gives~:
$$
\begin{aligned}
& | \hrhoe(\bfx,0,t-\e a) |  \exp( -\ztmin a) \leq \\
 & \leq \left(\bmax \nrm{\hrhoi}{L^\infty_\bfx L^1_t} \exp\left(-\frac{\ztmin(t-\e a)}{\e} \right) %\right. \\
 %&  \left. \phantom{\exp\left(-\frac{\ztmin(t-\e a)}{\e} \right)}
 + o(1) + \left| \cM_{\e,\bfx} \right| \right)\exp( -\ztmin a)\\
 & \leq C_1 \exp\left(-\frac{\ztmin t}{\e} \right) + o(1) (1+a) \exp( -\ztmin a)
\end{aligned}
$$ 
where $L^\infty_\bfx L^1_t:=L^\infty(\om; L^1(\rr))$.
 \item if $t \leq \e a$, the claim follows from  Duhamel formula and Lemma \ref{lem.rhoz.est} directly.
\end{itemize}
~\end{proof}
\begin{coro}
 Under hypotheses \ref{hypo.data} and \ref{hypo.data.deux},  one has that
 $$
 \int_{\rr} \sup_{\bfx \in \Omega} |\rhoe(\bfx,a,t) - \rhoz(\bfx,a,t )| da \leq C\left( 1+\frac{  t }{\e} \right) \exp\left( -\frac{ \ztmin t}{\e} \right) + o(1) \; ,
 $$
 which means that $\sup_{\bfx} |\rhoe(\bfx,a,t) - \rhoz(\bfx,a,t )|$ converges strongly in $L^1((0,T)\times \rr)$.
\end{coro}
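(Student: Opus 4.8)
The plan is to integrate in $a$ the pointwise bounds of Lemma~\ref{lem.duh.hrho} after taking the supremum over $\bfx\in\Omega$ on both sides, splitting the $a$-integral at the characteristic threshold $a=t/\e$. The right-hand sides in Lemma~\ref{lem.duh.hrho} hold for a.e.\ $\bfx$, and all their ingredients are dominated uniformly in $\bfx$ (the residual factors come from $\nrm{\cR_\e}{L^\infty}$ and $\nrm{\cM_\e}{L^\infty}$ via the Lyapunov estimate of Lemma~\ref{lem_convergence_rho}, hence are $\bfx$-independent), so that passing to $\sup_\bfx$ under the integral and integrating a $\bfx$-uniform majorant is legitimate. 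I then treat the two regions separately.

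On the region $a\le t/\e$, taking $\sup_\bfx$ of the first branch gives
$$
\sup_{\bfx}|\hrhoe(\bfx,a,t)|\lesssim \bmax\,\nrm{\hrhoi}{L^\infty_\bfx L^1_a}\,e^{-\ztmin t/\e}+o(1)\,(1+a)^2 e^{-\ztmin a},
$$
where the factor $o(1)$ multiplying the $a$-decaying term carries the vanishing residual $\nrm{\zteps-\ztz}{\infty}+\e$ produced in the proof of Lemma~\ref{lem.duh.hrho}, uniformly in $(a,t)$. Integrating over $(0,t/\e)$, the first term yields $C\,(t/\e)\,e^{-\ztmin t/\e}$, while the second is bounded by $o(1)\int_0^\infty(1+a)^2 e^{-\ztmin a}\,da=o(1)$. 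On the complementary region $a\ge t/\e$, the boundary contribution of the second branch becomes, after the change of variables $b=a-t/\e$,
$$
\int_{t/\e}^\infty \sup_\bfx|\hrhoi(\bfx,a-t/\e)|\,e^{-\ztmin t/\e}\,da = e^{-\ztmin t/\e}\,\nrm{\sup_\bfx|\hrhoi|}{L^1_a},
$$
which is finite by the initial-integrability part of Hypothesis~\ref{hypo.data.deux} (for $p=0$) together with the $\bfx$-uniform exponential decay of $\rhoz$ from Lemma~\ref{lem.rhoz.est}; the remaining $o(1)(1+a)^2 e^{-\ztmin a}$ term again integrates to $o(1)$. Summing the four pieces reproduces exactly the stated bound $C(1+t/\e)e^{-\ztmin t/\e}+o(1)$.

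To deduce strong convergence in $L^1((0,T)\times\rr)$, I would integrate this pointwise-in-$t$ estimate over $t\in(0,T)$. The substitution $s=t/\e$ gives
$$
\int_0^T (1+t/\e)\,e^{-\ztmin t/\e}\,dt=\e\int_0^{T/\e}(1+s)\,e^{-\ztmin s}\,ds\le \e\int_0^\infty(1+s)\,e^{-\ztmin s}\,ds=O(\e),
$$
while the integral of the $o(1)$ term over the bounded interval $(0,T)$ remains $o(1)$; hence $\int_0^T\!\int_\rr \sup_\bfx|\rhoe-\rhoz|\,da\,dt\to 0$ as $\e\to 0$, which is the claimed convergence. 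The only genuinely delicate point is the bookkeeping ensuring that the $a$-decaying remainders truly carry the vanishing factor $o(1)$ \emph{uniformly} in $(a,t)$, so that they survive the $a$-integration as an honest $o(1)$ rather than a fixed constant; this uniformity is inherited from the bounds on $\cR_\e$ and $\cM_\e$ and from Lemma~\ref{lem.rhoz.est}, and is the step I would check most carefully.
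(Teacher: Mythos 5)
Your proposal is correct and is essentially the paper's own argument: the paper's proof of this corollary consists precisely of integrating Lemma~\ref{lem.duh.hrho} in age (splitting at $a=t/\e$), and your bookkeeping of the two regions, the change of variables $b=a-t/\e$, and the final $t$-integration for the $L^1((0,T)\times\rr)$ claim match what is intended. Your one flagged concern is well placed and correctly resolved: the $(1+a)^2 e^{-\ztmin a}$ remainders in the \emph{statement} of Lemma~\ref{lem.duh.hrho} do carry, in its \emph{proof}, a factor $o(1)$ uniform in $(\bfx,a,t)$ inherited from $\nrm{\cR_\e}{L^\infty_{\xat}}$ and $\nrm{\cM_\e}{L^\infty_{\bfx,t}}$ via Lemmas~\ref{lem_convergence_rho} and~\ref{lem.rhoz.est}, without which the age integral would only produce a fixed constant rather than the stated $o(1)$.
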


\begin{proof}
The proof follows by integrating in age the previous Lemma.
\end{proof}
\begin{coro}\label{coro.conv.hrho.a}
 Under the same hypotheses,  $\sup_{\bfx \in \Omega} |\rhoe(\bfx,a,t) - \rhoz(\bfx,a,t )|$ converges strongly in $L^1((0,T)\times \rr, (1+a) )$.
\end{coro}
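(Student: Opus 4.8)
The plan is to read off the weighted convergence directly from the pointwise Duhamel bound of Lemma~\ref{lem.duh.hrho}, exactly as the preceding corollary read off the unweighted one; the only change is one extra power of $(1+a)$. First I would take the supremum over $\bfx\in\Omega$ in that lemma, multiply both branches by the weight $(1+a)$, and integrate over $(a,t)\in\rr\times(0,T)$, splitting the domain into the two regimes $t\geq\e a$ and $t<\e a$ dictated by the statement.

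Before integrating I would record two uniform-in-$\bfx$ bounds on the initial residual $\hrhoi$ (which equals $\rhoi-\rhoz(\cdot,\cdot,0)$): the plain bound $\sup_{\bfx}\nrm{\hrhoi(\bfx,\cdot)}{L^1_a}<\infty$, which I call $C_0$, and the weighted one $\sup_{\bfx}\int_{\rr}(1+a)|\hrhoi(\bfx,a)|\,da<\infty$, which I call $C_1$. Both follow from the integrability part of Hypothesis~\ref{hypo.data.deux} (the moments $\int_{\rr}\sup_{\bfx}|\rhoi|\,a^p\,da$ are finite for $p\in\{0,1,2\}$) together with the exponential decay $\rhoz(\bfx,a,t)\leq C\exp(-\ztmin a)$ supplied by Lemma~\ref{lem.rhoz.est}. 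The residual term $o(1)(1+a)^2\exp(-\ztmin a)$ present in both branches, once multiplied by $(1+a)$, becomes $o(1)(1+a)^3\exp(-\ztmin a)$, which is integrable in $a$ since the exponential dominates the polynomial; integrating this in time costs only a factor $T$, so its total contribution is $o(1)$.

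For the initial-data term I would treat the two regimes separately. When $t\geq\e a$ the prefactor is $\exp(-\ztmin t/\e)$ times $C_0$, and integrating the weight $(1+a)$ over $a\in(0,t/\e)$ produces a polynomial in $t/\e$ of degree two. When $t<\e a$ I would change variables $b:=a-t/\e$, turning the weight into $(1+b+t/\e)$ and the $a$-integral into $\int_{\rr}(1+b+t/\e)|\hrhoi(\bfx,b)|\,db\leq C_1+(t/\e)C_0$. In both regimes the surviving $t$-integral has the shape $\int_0^T P(t/\e)\exp(-\ztmin t/\e)\,dt$ with $P$ a fixed quadratic; the substitution $s=t/\e$ pulls out a factor $\e$ and leaves a convergent integral, so this whole piece is $O(\e)$.

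Assembling the contributions yields a bound of the form $C\e+o(1)$, which tends to $0$ and is precisely strong convergence in $L^1((0,T)\times\rr,(1+a))$. I expect the only genuinely delicate step to be the change of variables in the regime $t<\e a$: it is what generates the growing prefactor $(1+t/\e)$, and one must verify this growth is killed by $\exp(-\ztmin t/\e)$ after the substitution $s=t/\e$ — which is exactly why the weighted uniform bound $C_1$ on $\hrhoi$, and not merely the plain bound $C_0$, is needed.
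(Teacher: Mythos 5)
Your proposal is correct and is essentially the paper's own argument: the paper proves this corollary implicitly, as the weighted repetition of the preceding corollary, i.e.\ by multiplying the two branches of Lemma~\ref{lem.duh.hrho} by $(1+a)$, integrating in age (with the change of variables $b=a-t/\e$ in the regime $t<\e a$) and then in time, where the substitution $s=t/\e$ yields the $O(\e)$ contribution and the residual terms give $o(1)$, exactly as you describe. One small precision: since the quantity to be integrated is $\sup_{\bfx}|\hrhoe|$, the sup over $\bfx$ must be taken \emph{inside} the age integral, so your constants should be $C_p:=\int_{\rr}(1+a)^p\sup_{\bfx}|\hrhoi(\bfx,a)|\,da$ rather than $\sup_{\bfx}\int_{\rr}(1+a)^p|\hrhoi(\bfx,a)|\,da$ --- these are finite by exactly the sources you cite (the third item of Hypothesis~\ref{hypo.data.deux} and the uniform bound of Lemma~\ref{lem.rhoz.est}), so the argument goes through unchanged.
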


\subsection[Convergence of posititions]{Convergence of $\zeps$}\label{sec.conv.z}

\begin{thm}\label{thm.cvg}
 Under hypotheses \ref{hypo.data}, \ref{hypo.data.deux} and \ref{hypo.data.trois}, the weak solution $\zeps$ (cf Definition \ref{def.weak.sol.z.eps}) tends to $\zz \in L^\infty([0,T];H^1_0(\om))$ with $\dt \zz \in L^2(\qt)$, the  weak solution of \eqref{eq.z.0}, {\em i.e.}
 \begin{equation}\label{eq.f.v.z.0}
\int_{\qt} \mu_{1,0} \dt \zz \varphi(\bfx,t) d\bfx dt + \int_{\qt} \nabla \zz \cdot \nabla \varphi d\bfx dt =0 \; .
\end{equation}
for every  test function $\varphi \in \dot{H}^1(\qt) := \{ u \in H^1(\qt) \st u=0 \; $ $ \text{ a.e. in }  \dom \times (0,T)\}$
 %and the consistency with the initial condition holds in $L^1(\Omega)$.
\end{thm}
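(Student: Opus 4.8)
The plan is to combine the uniform-in-$\e$ a priori bounds established above to extract a limit, and then pass to the limit in the weak formulation \eqref{eq.weak.eps}, the only delicate point being the delay term $\int_\rr \veps\rhoe\,da$. First I would extract the limit: the uniform-in-$\e$ bound $\zeps\in X_T$ together with Theorem \ref{thm.dt.zeps.l2} provides, along a subsequence, weak-$*$ convergence $\nabla\zeps\rightharpoonup\nabla\zz$ in $L^\infty((0,T);L^2(\om))$ and weak convergence $\dt\zeps\rightharpoonup\dt\zz$ in $L^2(\qt)$, while Corollary \ref{coro.aubin} upgrades this to strong convergence $\zeps\to\zz$ in $C([0,T];L^2(\om))$; hence $\zz\in X_T$ with $\dt\zz\in L^2(\qt)$. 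Testing \eqref{eq.weak.eps} against $\varphi\in\dot H^1(\qt)$ and integrating in time, the elliptic contribution $\int_\qt\nabla\zeps\cdot\nabla\varphi$ passes to the limit immediately by weak convergence of the gradients, so that it remains to prove
$$
\int_\qt\Big(\int_\rr\veps\rhoe\,da\Big)\varphi\,d\bfx\,dt\longrightarrow\int_\qt\muoz\,\dt\zz\,\varphi\,d\bfx\,dt.
$$

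I would then split $\int_\rr\veps\rhoe\,da=\int_\rr\veps\rhoz\,da+\int_\rr\veps(\rhoe-\rhoz)\,da$. The error term is handled by the stability estimate of Theorem \ref{thm.stab.rho.veps}, which yields $\int_\om|\veps|\,d\bfx\leq C(1+a)$ uniformly in $\e$, together with the strong convergence of $\sup_\bfx|\rhoe-\rhoz|$ in $L^1((0,T)\times\rr,(1+a))$ from Corollary \ref{coro.conv.hrho.a}; bounding $\varphi$ in $L^\infty$ then forces this contribution to vanish.

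For the main term I would use the representation of the elongation along characteristics obtained from \eqref{eq.u.z} with $\veps(\bfx,0,t)=0$, namely $\veps(\bfx,a,t)=\int_0^a\dt\zeps(\bfx,t-\e\sigma)\,d\sigma$ for $t\geq\e a$ (the complementary range $a>t/\e$ contributes an exponentially small remainder, by Lemma \ref{lem.rhoz.est} together with the $Y_T$ bound). Fubini's theorem then gives $\int_\rr\veps\rhoz\,da=\int_\rr\dt\zeps(\bfx,t-\e\sigma)\,\Psi(\bfx,\sigma,t)\,d\sigma$ with $\Psi(\bfx,\sigma,t):=\int_\sigma^\infty\rhoz\,da$, and since $\int_\rr\Psi\,d\sigma=\int_\rr a\,\rhoz\,da=\muoz$, the target is exactly $\int_\rr\dt\zz(\bfx,t)\,\Psi\,d\sigma$. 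I would control the difference through the decomposition $\dt\zeps(\cdot,t-\e\sigma)-\dt\zz(\cdot,t)=[\dt\zeps(\cdot,t-\e\sigma)-\dt\zz(\cdot,t-\e\sigma)]+[\dt\zz(\cdot,t-\e\sigma)-\dt\zz(\cdot,t)]$: the second bracket vanishes by $L^2$-continuity of translations, whereas for the first, after the change of variables $s=t-\e\sigma$ the multiplier $\Psi(\cdot,\sigma,s+\e\sigma)\,\varphi(\cdot,s+\e\sigma)$ converges strongly in $L^2$ and may be paired with the weakly convergent $\dt\zeps-\dt\zz\rightharpoonup0$. In both brackets the exponential decay $\Psi\lesssim e^{-\ztmin\sigma}$ furnishes a $\sigma$-integrable dominating function, so dominated convergence in the age variable closes the argument. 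I expect this weak--strong passage under the time shift, made uniform in $\sigma$, to be the main obstacle.

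Finally I would identify the initial datum and invoke uniqueness. Passing to the limit in \eqref{eq.zeps.t.0} shows $\zeps(\cdot,0)\to\zp(\cdot,0)$, which together with the strong $C([0,T];L^2(\om))$ convergence gives $\zz(\cdot,0)=\zp(\cdot,0)$; thus $\zz$ is a weak solution of \eqref{eq.z.0} in the sense of \eqref{eq.f.v.z.0}. As the limit problem is linear parabolic with a coefficient $\muoz$ that is bounded below by a positive constant (owing to the lower bounds on $\bz$ and $1-\muzz$ and the upper bound on $\ztz$), its weak solution is unique; consequently every subsequence shares the same limit and the whole family $(\zeps)_\e$ converges.
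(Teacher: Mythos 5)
Your proposal follows, in all essentials, the same route as the paper: the same splitting of the delay term into a $\rhoz$-part plus an error controlled by Theorem \ref{thm.stab.rho.veps} and Corollary \ref{coro.conv.hrho.a}, the same exponential-decay-plus-dominated-convergence structure in the age variable, and the same compactness ingredients (energy bound, Theorem \ref{thm.dt.zeps.l2}, Corollary \ref{coro.aubin}). The one difference of technique is in the main term: the paper keeps $\veps$ in difference-quotient form $(\zeps(\bfx,t)-\zeps(\bfx,t-\e a))/(\e a)$, weights it by $a\,\rhoz$, and invokes weak $L^2(\qt)$ convergence of these quotients to $\dt \zz$, whereas you integrate along characteristics, $\veps(\bfx,a,t)=\int_0^a \dt\zeps(\bfx,t-\e\sigma)\,d\sigma$, and apply Fubini to reach the kernel $\Psi(\bfx,\sigma,t)=\int_\sigma^\infty \rhoz\,da$ with $\int_\rr \Psi\,d\sigma=\mu_{1,0}$. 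The two computations are related by an integration by parts in age and are equally valid; yours has the merit of making the weak--strong pairing under the time shift fully explicit, a point the paper treats tersely. Your closing step (lower bound on $\mu_{1,0}$, uniqueness of the limit problem, hence convergence of the whole family rather than of a subsequence) is a sound addition that the paper leaves implicit.

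There is, however, one genuine gap in the write-up as it stands: you test the $\e$-problem directly against an arbitrary $\varphi\in\dot H^1(\qt)$, yet your estimate of the error term $\int_\qt\int_\rr \veps(\rhoe-\rhoz)\,da\,\varphi\,d\bfx\,dt$ pairs the $L^1_\bfx$ bound $\int_\om|\veps|\,d\bfx\leq C(1+a)$ with the $L^1\left((0,T)\times\rr;L^\infty(\om)\right)$ convergence of $\rhoe-\rhoz$, and this H\"older pattern requires $\varphi\in L^\infty(\qt)$. Since $\qt$ is a domain of dimension $n+1\geq 2$, $\dot H^1(\qt)$ does not embed into $L^\infty(\qt)$, so the estimate is simply not available for the test class you announce; the same objection applies to your treatment of the tail $a>t/\e$, where the $Y_T$ bound again only controls $\veps$ in $L^1_\bfx$. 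The repair is exactly the two-step density argument the paper performs: first establish \eqref{eq.f.v.z.0} for bounded test functions (products $v(\bfx)w(t)$ with $v\in H^1_0(\om)\cap L^\infty(\om)$, $w\in L^\infty((0,T))$, or smooth functions vanishing on $\dom\times(0,T)$), and only then extend to all of $\dot H^1(\qt)$; this extension is legitimate because in the limit equation both $\mu_{1,0}\dt\zz$ and $\nabla\zz$ belong to $L^2(\qt)$, so both terms of \eqref{eq.f.v.z.0} are continuous with respect to the $H^1(\qt)$ norm of $\varphi$, and the bounded smooth class is dense by Lemma \ref{lem.density}. With this modification your argument is complete.
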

\begin{proof}
We test the weak formulation in Definition \ref{def.weak.sol.z.eps}  by a function $v\in H^1_0(\om)\cap L^\infty(\om)$ and we integrate in time after testing by $w \in L^\infty((0,T))$. Rewriting in terms of the elongation variable we obtain~:
\begin{equation}
\label{eq.weak.eps.time} 
\int_{\qt} \int_\rr \rhoe(\bfx,a,t) \veps(\bfx,a,t) \, da \,  v(\bfx) \,  d\bfx \,  w(t) \,  dt + \int_{\qt} \nabla \zeps(\bfx,t) \cdot \nabla v(\bfx) \, w(t) \,  d\bfx  \, dt =0 \; .
\end{equation}
We denote $\varphi(\bfx,t):=v(\bfx)w(t)$ and start with the convergence of the first term above
 $$
\begin{aligned}
&  \int_{\Omega} \int_0^T \int_{\rr} \rhoe(\bfx,a,t) \veps(\bfx,a,t)  \varphi(\bfx,t)\, da \; dt\;d\bfx \\
& =  \int_{\Omega} \int_0^T \int_{\rr} (\rhoe(\bfx,a,t) - \rhoz(\bfx,a,t) ) \veps (\bfx,a,t) \varphi(\bfx,t)\, da \; dt\;d\bfx  \\
 & \quad + \int_{\Omega} \int_0^T \int_{\rr} \rhoz (\bfx,a,t) \veps(\bfx,a,t)  \varphi(\bfx,t) \, da \; dt\;d\bfx =: I_1 + I_2 \; .\\
\end{aligned} 
 $$
Due to Corollary \ref{coro.conv.hrho.a} and Theorem \ref{thm.stab.rho.veps}, $I_1$ can be estimated~:
 $$
 \left| I_1 \right| \leq \nrm{\hrhoe (1+a)}{L^1\left((0,T)\times\rr;L^\infty(\Omega)\right)} \nrm{\frac{\veps}{1+a}}{L^\infty\left((0,T)\times\rr;L^1(\Omega)\right)} \nrm{\varphi}{L^\infty(\qt)} \sim  o_\e(1) \; .
 $$
 For the second term one has~:
 $$
\begin{aligned}
& \int_{Q_T} \int_0^\infty \rhoz(\bfx,a,t) \veps(\bfx,a,t) \varphi (\bfx,t)\; da \;d\bfx\; dt \\ 
& = \int_0^{T/\e}  \left( \int_{\e a}^T \int_{\Omega} \rhoz \frac{\veps}{a} \varphi d\bfx dt  \right) a da 
 +\int_0^T  \int_{t/ \e}^\infty \int_\om  \rhoz(\bfx,a,t)\veps(\bfx,a,t) \varphi (\bfx,t)\;d\bfx \; da \; dt\\
& = I_{2,1}+I_{2,2} \; .
\end{aligned}
$$
The first part of this expression can be rewritten as~:
$$
I_{2,1} = \int_0^{T/\e} \left( \int_{\e a}^T \int_{\Omega} \rhoz \frac{(\zeps(\bfx,t) -\zeps(\bfx,t-\e a))}{\e a} \varphi (\bfx,t)d\bfx \,dt  \right) \, a \,da \; .
$$
For almost every fixed $a \in \rr$, one has convergence of the term
$$
 \int_{\e a}^T \int_{\Omega} \rhoz \frac{(\zeps(\bfx,t) -\zeps(\bfx,t-\e a))}{\e a} \varphi (\bfx,t) d\bfx dt \to  \int_{0}^T \int_{\Omega} \rhoz (\bfx,a,t)\dt \zz (\bfx,t)\varphi (\bfx,t)d\bfx
$$
because of the weak convergence in $L^2(Q_T)$ of the sequence $\frac{(\zeps(\bfx,t) -\zeps(\bfx,t-\e a))}{\e a}$.
Moreover, thanks to the estimates on $\rhoz$, one has that
$$
\begin{aligned}
f_\e(a) & :=   a  \int_{\e a}^T \int_{\Omega} \rhoz \frac{(\zeps(\bfx,t) -\zeps(\bfx,t-\e a))}{\e a}  \varphi \, d\bfx \, dt  \\
  & \leq C \,a \,\exp\left( -  \ztmin a  \right)  \nrm{\chiu{(\e a,T)} D^{-\e a}_t \zeps}{L^2(\qt)} \nrm{\varphi}{L^2(Q_T)} \\
  & \lesssim  a \exp(-\ztmin a) \sup_{\tau\in(0,T)} \nrm{\chiu{(\tau,T)} D^{-\tau}_t \zeps}{L^2(\qt))}\lesssim  a \exp(-\ztmin a)\nrm{\dt \zeps}{L^2(\qt)} \;.
\end{aligned}
 $$
Due to Theorem \ref{thm.dt.zeps.l2} the norm $\dt\zeps$ is 
 bounded uniformly in $\e$, and thus the majorizing function is a $L^1$ function in age. 
 Applying Lebesgue's Theorem  gives the commutation of the limit and the integral in age of $f_\e$.
 
 With regard to the rest, we set
 $
 I_{2,2} %= \int_0^T \int_{t/\e}^\infty \int_{\Omega} \rhoz(\bfx,a,t) \veps(\bfx,a,t) d\bfx \; da \; \varphi(\bfx,t) \; dt  
 =: \int_0^T \heps(t) dt
 $
and infer that
$$
\begin{aligned}
 \heps(t) & \leq \int_{t/ \e}^\infty C \exp(-\ztmin a) (1+a) \sup_{a\in \rr} \frac{\int_\Omega | \veps | d\bfx }{(1+a)} \nrm{\varphi}{L^\infty(Q_T)} da \\
 & \leq C \left( 1 + \frac{t}{\e} \right) \exp\left( -\frac{ \ztmin t}{\e} \right) \; ,
\end{aligned}
$$  
which integrated in time gives 
$
\left| I_{2,2} \right| \sim O(\e).
$
On the other hand, by standard arguments of weak convergence, one easily proves thanks to the energy estimates that
$$
\int_{Q_T} \nabla \zeps \cdot \nabla \varphi \; d\bfx \; dt \to \int_{Q_T} \nabla \zz \cdot \nabla \varphi \;  d\bfx \; dt.
$$
The weak formulation \eqref{eq.weak.eps.time} tends, as $\e$ goes to zero, to
$$
\int_{\qt} \mu_{1,0} \dt \zz v(\bfx) w(t) d\bfx dt + \int_{\qt} \nabla \zz \cdot \nabla v(\bfx) w(t) d\bfx dt =0
$$
for every $v\in H^1_0(\om)\cap L^\infty(\om)$ and every $w \in L^\infty((0,T))$.
Thanks to Corollary \ref{coro.aubin} and Theorem \ref{thm.dt.zeps.l2}, $\zz \in C([0,T];H^1_0(\om))$
and $\dt \zz \in L^2(\qt)$. %~: $\zz$ is  the weak solution of \eqref{eq.z.0}.
For the consistency with the initial condition it follows from Lemma \ref{lem.ci.veps.stab} in $L^1(\om)$.
Using the variational form \eqref{eq.weak.eps} at $t=0$, one obtains as well that
$$
\nrm{\zeps(\cdot,0)-\zp(\cdot,0)}{L^2(\om)} \lesssim o_\e(1)
$$
thanks to the fact that $\zp \in H^1_0(\om)$.
We consider a test function $\varphi \in \D(\qt)$. 
By Theorem III p.108 \cite{Schw.book}, 
the subspace of functions $\varphi(\bfx,t)$ of the form
$\varphi  := \sum_j v_j(\bfx) w_j(t)$ is dense in $\D(\qt)$.
Thus, the previous expression becomes~: for all $\varphi\in\D(\qt)$,
$$
< \mu_{1,0} \dt \zz,\varphi>_{\D'(\qt),\D(\qt)} = < \Delta \zz,\varphi>_{\D'(\qt),\D(\qt)}
$$
which means that (i) the equality holds a.e. in $\qt$ and  (ii) 
as $\mu_{1,0}\dt \zz \in L^2(\qt)$, so does $\Delta \zz$.%, which ends the proof.

Using a test function $\varphi \in C^\infty([0,T]\times\om)$ vanishing on $[0,T]\times\partial \Omega$, 
for every fixed $t \in [0,T]$, one can
test the weak form \eqref{eq.weak.eps} and integrate in time, which implies that $\zeps$
solves~:
$$
\int_{\qt} \int_{\rr} \rhoe \veps da \varphi(\bfx,t) d\bfx dt + \int_{\qt} \nabla \zeps \cdot \nabla \varphi d\bfx dt =0 \;.
$$
This converges in the same way as above to the limit weak form \eqref{eq.f.v.z.0}
for every  test function $\varphi \in C^\infty([0,T]\times\om)$ vanishing on $[0,T]\times\partial \Omega$.
Now thanks to Lemma \ref{lem.density} this set is dense in $\dot{H}^1(\qt)$.
The integration by parts in time is well defined and gives~:
$$
\begin{aligned}
\left(\zz(\cdot,T)\mu_{1,0}(\cdot,T), \varphi(\cdot,T) \right) & + \int_0^T(\zz(\cdot,t), \dt  \mu_{1,0} \; \varphi +\mu_{1,0} \dt \varphi ) dt - \int_0^T ( \nabla \zz , \nabla \varphi ) dt \\
& = (\mu_{1,0}(\cdot,0) \zp(\cdot,0) ,\varphi(\cdot,0)) 
\end{aligned}
$$
for any $\varphi$ in $\dot{H}^1(\qt)$.  Thus $\zz$ is  a weak solution 
in the sense of \cite{lady.book} p.136. %(\alert{a few words are missing in this paragraph})
\end{proof}

\section{Adding a source term}\label{sec.b}

If one adds a source term to \eqref{eq.z.eps}, it becomes 
\begin{equation}
\label{eq.z.eps.source} 
\left\{ 
\begin{aligned} 
& \cL_\e(\zeps,\rhoe )  = \Delta_{\bfx} \zeps  + \cS(\bfx,t) \; , \quad & t\geq 0,\; \bfx \in \Omega \; , \\
& \zeps(\bfx,t) = 0 ,& \;t \in \rr  \; ,\; \bfx \in \dom ,\\
& \zeps(\bfx,t)=\zp(\bfx,t) \; , \quad  &t < 0 \; ,\; \bfx \in \Omega ,
\end{aligned}  
\right. 
\end{equation}
where we choose $\cS\in W^{1,\infty}((0,T);L^2(\om))$ for instance. We give some hints in order to 
extend the previous results. Existence and uniqueness for $\e$ fixed work the
same, we detail those of Section \ref{sec.nrj}.
The extension of Theorem \ref{thm.nrj} reads~:
\begin{thm}
 If $\cS \in W^{1,\infty}((0,T);L^2(\om))$ and under hypotheses \ref{hypo.data}, \ref{hypo.data.deux} and \ref{hypo.data.trois}, 
 one has that 
 $$
 \cet(\zeps(\cdot,t)) \leq C_1 \exp( C_2 t ) \cE_0(\zeps(\cdot,0)) + C_3
 $$
 and $\int_{(0,T)\times\rr \times\om} \zteps \rhoe \veps^2 da d\bfx dt < C_4$ as well.
 The constants $(C_i)_{i\in\{1,\dots,4\}}$ are independent on $\e$.
\end{thm}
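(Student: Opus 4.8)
The plan is to mirror the proof of Theorem~\ref{thm.nrj}, keeping track of the extra contribution produced by $\cS$. Passing to the elongation variable $\veps$ as there, it still solves $(\e\dt+\da)\veps=\dt\zeps$, but the Euler--Lagrange identity now reads $\int_\rr\rhoe\veps\,da=\Delta\zeps+\cS$. Multiplying the transport equation by $\rhoe\veps^2$, integrating in age and space and integrating by parts in the $\Delta\zeps\,\dt\zeps$ contribution exactly as before (the mollification justification from Theorem~\ref{thm.nrj} carries over, since for fixed $\e$ one still has $\zeps,\dt\zeps\in L^\infty_tH^2_\bfx$ by the analogues of Corollary~\ref{coro.elliptic.reg} and Theorem~\ref{thm.dtz.non.unif} with an extra $L^2$ right-hand side), I obtain the energy identity
\[
\ddt{}\cet(\zeps(\cdot,t))+\int_{\om\times\rr}\zteps\rhoe\veps^2\,da\,d\bfx=\int_\om\cS\,\dt\zeps\,d\bfx .
\]
Writing $D(t):=\int_{\om\times\rr}\zteps\rhoe\veps^2\,da\,d\bfx$ for the dissipation, the whole issue is to bound the right-hand side uniformly in $\e$.

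The next step is to control $\dt\zeps$ in $L^2(\om)$ by the dissipation. Differentiating $\int_\rr\rhoe\veps\,da=\Delta\zeps+\cS$ in time and using $(\e\dt+\da+\zteps)\rhoe\veps=\rhoe\dt\zeps$ together with $\veps(\bfx,0,t)=0$, one checks that $\dt\zeps$ solves, for a.e.\ $t$, the variational problem
\[
\muze\,\dt\zeps-\e\Delta\dt\zeps=\int_\rr\zteps\rhoe\veps\,da+\e\,\dt\cS .
\]
Testing with $\dt\zeps$ and using the uniform lower bound on $\muze$ from Lemma~\ref{prop.rho.bounds} gives
\[
\mumin\,\nrm{\dt\zeps(\cdot,t)}{L^2(\om)}\leq\nrm{\int_\rr\zteps\rhoe\veps\,da}{L^2(\om)}+\e\,\nrm{\dt\cS(\cdot,t)}{L^2(\om)} .
\]
By Jensen's inequality, exactly as in Theorem~\ref{thm.dt.zeps.l2} (and using $\muze<1$), the first norm is bounded by $\sqrt{\ztmax}\,D(t)^{1/2}$, while $\e\,\nrm{\dt\cS}{L^2(\om)}$ is bounded uniformly in $\e$ (for $\e$ in a bounded range) because $\cS\in W^{1,\infty}((0,T);L^2(\om))$.

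I would then feed this into the energy identity. Cauchy--Schwarz followed by Young's inequality yields
\[
\Big|\int_\om\cS\,\dt\zeps\,d\bfx\Big|\leq\nrm{\cS}{L^2(\om)}\,\nrm{\dt\zeps}{L^2(\om)}\leq\tfrac12 D(t)+C ,
\]
where $C$ depends only on $\ztmax$, $\mumin$ and $\nrm{\cS}{W^{1,\infty}_tL^2_\bfx}$ (and the upper bound on $\e$), hence not on $\e$. The crucial mechanism is that $\dt\zeps$ is controlled by $D(t)^{1/2}$, so half of the dissipation can be absorbed on the left, leaving $\ddt{}\cet(\zeps(\cdot,t))+\tfrac12 D(t)\leq C$. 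Integrating from $0$ to $t$ and recalling that at $t=0$ the energy equals $\cE_0(\zeps(\cdot,0))$ gives $\cet(\zeps(\cdot,t))+\tfrac12\int_0^tD\leq\cE_0(\zeps(\cdot,0))+Ct$, which simultaneously provides $\int_{(0,T)\times\rr\times\om}\zteps\rhoe\veps^2\,da\,d\bfx\le C_4$ and, a fortiori (take $C_1=1$, any $C_2\geq0$ and $C_3=CT$), the stated bound $\cet(\zeps(\cdot,t))\leq C_1\exp(C_2t)\,\cE_0(\zeps(\cdot,0))+C_3$ with all constants independent of $\e$.

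The main obstacle is the uniform-in-$\e$ handling of $\int_\om\cS\,\dt\zeps\,d\bfx$: a crude estimate of $\dt\zeps$ would reintroduce the factor $1/\e$ coming from the delay part of $\cet$ (indeed $D\gtrsim\e^{-1}(\cet-\nud\int_\om|\nabla\zeps|^2)$), so it is essential to estimate $\dt\zeps$ \emph{through the dissipation} $D$ rather than through $\cet$, and to verify that $\dt\cS$ enters only with the benign prefactor $\e$. A secondary point is to confirm that the non-uniform fixed-$\e$ regularity used to differentiate $\int_\rr\rhoe\veps\,da$ in time and to integrate by parts in space still holds with $\cS$ present, which follows by redoing Corollary~\ref{coro.elliptic.reg} and Theorem~\ref{thm.dtz.non.unif} with the additional $L^2$ source.
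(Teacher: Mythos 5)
Your proof is correct, and it takes a genuinely different route from the paper's. The paper disposes of the problematic term $\int_\om \cS\,\dt\zeps\,d\bfx$ by integrating by parts \emph{in time} (licit by Theorem \ref{thm.dtz.non.unif} and $\cS\in W^{1,\infty}_tL^2_\bfx$), which converts it into $(\zeps(\cdot,t),\cS(\cdot,t))-(\zeps(\cdot,0),\cS(\cdot,0))+\int_0^t(\zeps,\dt\cS)\,ds$; these terms are then controlled by $\nrm{\zeps(\cdot,t)}{L^2(\om)}\lesssim \cet(\zeps(\cdot,t))^{1/2}$ via Poincar\'e and Young, and Gronwall's lemma closes the argument --- this is precisely where the factor $C_1\exp(C_2t)$ in the statement comes from. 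You instead keep $\dt\zeps$ and bound it \emph{through the dissipation}: your elliptic problem $(\muze-\e\Delta)\dt\zeps=\int_\rr\zteps\rhoe\veps\,da+\e\,\dt\cS$ is exactly the equation \eqref{eq.g} that the paper itself introduces later for the fully coupled model, and your Jensen estimate $\nrm{\int_\rr\zteps\rhoe\veps\,da}{L^2(\om)}\leq\sqrt{\ztmax}\,D(t)^{1/2}$ is the one used in Theorem \ref{thm.dt.zeps.l2}; absorbing $\tfrac12 D$ then yields $\cet(\zeps(\cdot,t))\leq\cE_0(\zeps(\cdot,0))+Ct$, a \emph{linear-in-time} bound strictly sharper than the stated exponential one, with the $C_4$ bound obtained simultaneously rather than by a separate integration. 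The trade-off: your route additionally requires the uniform lower bound $\muze\geq\mumin>0$ (Lemma \ref{prop.rho.bounds}, with $\mu_{0,I}$ bounded below uniformly in $\bfx$ --- a convention the paper already relies on in Theorem \ref{thm.dt.zeps.l2}, so this is acceptable here) and the fixed-$\e$ regularity needed to differentiate the weak formulation in time, whereas the paper's Gronwall argument needs neither the lower bound on $\muze$ nor the equation for $\dt\zeps$, at the price of exponential growth of the constant. Your observation that a crude estimate of $\dt\zeps$ would fail --- since $D\gtrsim\e^{-1}\bigl(\cet-\nud\int_\om|\nabla\zeps|^2\,d\bfx\bigr)$, the delay part of the energy cannot be used --- correctly identifies the obstruction that both proofs must circumvent, each in its own way.
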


\begin{proof}
 By similar arguments as in Theorem \ref{thm.nrj} we obtain~:
$$
\ddt{} \cet(\zeps(\cdot,t)) + \int_{\om\times\rr} \zteps \rhoe \veps^2 da d\bfx = \int_\om \dt \zeps \cS d\bfx %\end{aligned}
$$
Thanks to Theorem \ref{thm.dtz.non.unif}, we can integrate by parts in time the latter expression
which gives~:
$$
\cet(\zeps(\cdot,t))\leq \cE_0(\zeps(\cdot,0))+ (\zeps(\cdot,t), \cS(\cdot,t))-(\zeps(\cdot,0), \cS(\cdot,0)) + \int_0^t (\zeps(\cdot,s), \dt \cS(\cdot,s)) ds
$$
where the parentheses denote the scalar product in $L^2(\om)$.
Then using Poincar\'e-Wirtinger in order to estimate $\nrm{\zeps(\cdot,t)}{L^2(\om)} \lesssim \cet(\zeps(\cdot,t))$ and Young's inequality twice (for a given positive $\delta$), one obtains~:
$$
\cet(\zeps(\cdot,t))\lesssim \delta \cet(\zeps(\cdot,t)) + \delta \int_0^t \cet(\zeps(\cdot,t)) + C
$$
where $C$ depends on $\cE_0(\zeps(\cdot,0))$ and on $\cS$. By Gronwall, one concludes.
\end{proof}

Of course since $\zeps$ now solves \eqref{eq.z.eps.source} the corresponding energy functional is 
to be redefined as 
$$%\begin{equation}\label{eq.nrj.source}
 \ti{\cal E}_t(w(\cdot)) :=\nud  \int_{\Omega} \left\{  | \nabla w |^2 + \int_{\rr} \frac{| w(\bfx)- \zeps(\bfx,t-\e a) |^2 }{\e} \rhoe(\bfx,t,a) da - \cS(\bfx,t)w(\bfx) \right\} d\bfx 
$$%\end{equation}
\begin{lem}
 Under the same hypotheses as above, one has 
 $ \cet(\zeps(\cdot,0)) < C$.
\end{lem}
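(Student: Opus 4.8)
The plan is to use the variational characterisation of $\zeps(\cdot,0)$. First observe that evaluating the (unperturbed) energy at $t=0$ gives, since $\zeps(\bfx,-\e a)=\zp(\bfx,-\e a)$ and $\rhoe(\bfx,a,0)=\rhoi(\bfx,a)$,
$$
\cet(\zeps(\cdot,0)) = \nud\int_{\om}\left\{ |\nabla\zeps(\cdot,0)|^2 + \int_{\rr}\frac{|\zeps(\bfx,0)-\zp(\bfx,-\e a)|^2}{\e}\,\rhoi(\bfx,a)\,da\right\}d\bfx =: \cE_0(\zeps(\cdot,0)).
$$
The decisive point is that, in the presence of the source, $\zeps(\cdot,0)$ no longer minimises $\cE_0$ but the perturbed functional $\ti{\cal E}_0$ defined above; extending the equivalence of Lemma \ref{lem.equiv} to \eqref{eq.z.eps.source} shows that solving the Euler--Lagrange equation at $t=0$ amounts to minimising $\ti{\cal E}_0$ over $H^1_0(\om)$. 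Since $\ti{\cal E}_0$ differs from $\cE_0$ only through the bounded linear form $\ell(w):=\nud\int_{\om}\cS(\bfx,0)\,w\,d\bfx$, this is the structure I would exploit.

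Next I would test minimality against the admissible competitor $\zp(\cdot,0)\in H^1_0(\om)$ (hypothesis \ref{hypo.data.trois} $i)a)$). From $\ti{\cal E}_0(\zeps(\cdot,0))\le\ti{\cal E}_0(\zp(\cdot,0))$ and $\ti{\cal E}_0=\cE_0-\ell$ one rearranges to
$$
\cE_0(\zeps(\cdot,0)) \le \cE_0(\zp(\cdot,0)) + \nud\int_{\om}\cS(\bfx,0)\bigl(\zeps(\bfx,0)-\zp(\bfx,0)\bigr)\,d\bfx.
$$
The first term on the right is already controlled uniformly in $\e$ by the estimate obtained in the proof of Theorem \ref{thm.nrj}, namely $\cE_0(\zp(\cdot,0))\le \e\,(\int_{\om}C_{\zp}^2\,d\bfx)(\sup_\bfx\int_{\rr}\rhoi a^2\,da)+\nrm{\nabla\zp(\cdot,0)}{L^2(\om)}^2<\infty$.

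It remains to absorb the source coupling, and this is the main obstacle: because $\cS$ pairs with $\zeps(\cdot,0)$ itself, the comparison does not bound the extra term purely by the data. Writing $X:=\cE_0(\zeps(\cdot,0))$ and using Cauchy--Schwarz together with the Poincar\'e inequality on $\zeps(\cdot,0)-\zp(\cdot,0)\in H^1_0(\om)$,
$$
\nud\int_{\om}\cS(\bfx,0)\bigl(\zeps(\cdot,0)-\zp(\cdot,0)\bigr)\,d\bfx \le \nud C_P\nrm{\cS(\cdot,0)}{L^2(\om)}\bigl(\nrm{\nabla\zeps(\cdot,0)}{L^2(\om)}+\nrm{\nabla\zp(\cdot,0)}{L^2(\om)}\bigr),
$$
and since $\nud\nrm{\nabla\zeps(\cdot,0)}{L^2(\om)}^2\le X$, the right-hand side has the form $K\sqrt{X}+K'$ with $K,K'$ depending only on $\nrm{\cS}{W^{1,\infty}((0,T);L^2(\om))}$, $C_P$ and $\nrm{\nabla\zp(\cdot,0)}{L^2(\om)}$, all independent of $\e$. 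Young's inequality $K\sqrt{X}\le\nud X+\nud K^2$ then moves the self-referential term to the left, yielding $\nud X\le\cE_0(\zp(\cdot,0))+\nud K^2+K'$, i.e. $\cet(\zeps(\cdot,0))=X<C$ uniformly in $\e$.

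In summary, the hard part is the absorption step forced by the self-referential appearance of $\zeps(\cdot,0)$ in the source pairing; everything else is a comparison of the minimiser with the past datum $\zp(\cdot,0)$ plus Poincar\'e and Young. A secondary point to check is that the minimisation/Euler--Lagrange equivalence of Lemma \ref{lem.equiv} persists when the bounded linear form $\ell$ is added, which is routine since it merely shifts the strictly convex quadratic functional.
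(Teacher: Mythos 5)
Your proof is correct. Note that the paper does not actually prove this lemma: it is stated bare, followed only by the remark that ``the rest follows the same lines as in the homogeneous case'', so there is no line-by-line comparison to make; your argument is the natural completion of that hint. It mirrors the final step of the proof of Theorem \ref{thm.nrj} (minimality of $\zeps(\cdot,0)$ via Lemma \ref{lem.equiv}, tested against the competitor $\zp(\cdot,0)\in H^1_0(\om)$, whose energy is bounded by the data), and you correctly isolate the one genuinely new difficulty caused by the source: the pairing of $\cS(\cdot,0)$ with $\zeps(\cdot,0)-\zp(\cdot,0)$ cannot be bounded by the data alone, because it involves the unknown. Your resolution --- Cauchy--Schwarz, Poincar\'e applied to $\zeps(\cdot,0)-\zp(\cdot,0)\in H^1_0(\om)$, the observation $\nud\nrm{\nabla \zeps(\cdot,0)}{L^2(\om)}^2\le \cE_0(\zeps(\cdot,0))$, and Young's inequality to absorb the term $K\sqrt{X}$ into the left-hand side --- is sound, and every constant it produces ($C_P$, $\nrm{\cS(\cdot,0)}{L^2(\om)}$, $\nrm{\nabla\zp(\cdot,0)}{L^2(\om)}$, $\cE_0(\zp(\cdot,0))$) is independent of $\e$, which is what the convergence analysis of the section requires.

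One bookkeeping point, inherited from the paper rather than introduced by you: as printed, $\ti{\cal E}_t$ carries the source term inside the overall factor $\nud$, so its minimizer would solve \eqref{eq.z.eps.source} with $\nud\cS$ in place of $\cS$; for the equivalence of Lemma \ref{lem.equiv} to extend to \eqref{eq.z.eps.source} exactly, the linear perturbation must enter with coefficient one, i.e. $\ell(w)=\int_\om \cS(\bfx,0)\,w\,d\bfx$ rather than your $\nud\int_\om \cS(\bfx,0)\,w\,d\bfx$. This changes only the constant $K$ by a factor of $2$ and nothing in the structure of the absorption argument, so the proof stands as written.
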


The rest follows the same lines as in the homogeneous case since 
the source term $\cS$ belongs to the appropriate functional space.

\section{The fully coupled problem}\label{sec.full}
For sake of simplicity we restrict ourselves in this section to the one-dimensional case
in the space variable, and set $\om := (0,1)$.
We consider here the case where $\zteps$ depends on the elongation.
The density of bonds  $\rhoe$ solves
\begin{equation}\label{eq.rho.eps.nl}
\left\{
 \begin{aligned}
 & \e \dt \rhoe + \da \rhoe + \zeta(\veps) \rhoe = 0 ,& (\xat) \in \om\times\rr\times(0,T), \\
 & \rhoe(\xat) = \beps(\bfx,t) ( 1 -\muze(\bfx,t) ), & (\xat) \in \om\times\{0\}\times(0,T), \\
 & \rhoe(\bfx,a,0) = \rhoi(\bfx,a),& (\bfx,a) \in \om \times\rr,
\end{aligned}
\right.
\end{equation}
coupled with the system 
\begin{equation}\label{eq.u.eps.nl}
\left\{
 \begin{aligned}
&  \e \dt \veps + \da \veps = g(\bfx,t), & (\xat) \in  \om\times\rr\times(0,T), \\
 & \veps(\bfx,0,t)=0,&  (\xat) \in \om\times\{0\}\times(0,T), \\
 &\veps(\bfx,a,t) =0, &  (\bfx,a,t) \in \dom\times \rr \times (0,T), \\
 & \veps (\bfx,a,0), = \vepsi(\bfx,a)& (\bfx,a) \in \om \times\rr,
\end{aligned}
\right.
\end{equation}
where $g$ solves in the variational sense in $\bhoz$~:
\begin{equation}\label{eq.g}
%\left\{
% \begin{aligned}
(\muze -\e  \Delta) g = \int_\rr \zteps \rhoe \veps da  + \e \dt \cS, \quad \alev \bfx \in \om 
%\end{aligned}
%\right.
\end{equation}
and we assume
\begin{hypo}\label{hypo.data.nl}
 Hypotheses \ref{hypo.data.deux} hold, moreover we add, 
  \begin{enumerate}[i)]
  \item $\zeta$ is a Lipschitz function s.t. $| \zeta'(u) |\leq \ztlip$ for all $u \in \RR$ and $\zeta(u)\geq \ztmin>0$ 
but there is not necessarily an upper bound
  \item $\beps$ is a given bounded function in space and time, moreover
$$
0 \leq \beps (\bfx,t) \leq \bmax \quad \alev (\bfx,t) \in \om\times(0,T).
$$
  \item for sake of simplicity we assume that $\cS\in W^{1,\infty}((0,T);L^2(\om))$,
\item for  $\zp$  the Lipschitz constant $C_{\zp} \in L^\infty_\bfx$. This implies that $\veps$ defined as in \eqref{eq.v.eps} satisfies $\vepsi/(1+a) \in  L^\infty_{\bfx,a}$.
\item  $\zeps(\bfx,0)$ satisfies the variational problem : find $ z \in \bhoz$ s.t.
  \begin{equation}
    \label{eq.zeps.t.0.nl}
(    \mu_{0,I} - \e \Delta ) z = \int_\rr \zp(\bfx,-\e a) \rhoi(\bfx,a) da + \cS(\bfx,0)
  \end{equation}
%\item $\vepsi$ is defined as in \eqref{eq.v.eps} 
  \end{enumerate}
\end{hypo}
%We straighten the hypothesis \ref{hypo.data.nl} when needed setting 
%\begin{hypo}\label{hypo.data.nl.strict}
%The on-rate $\beps$ satisfies $
%0 < \bmin \leq \beps. 
%$
%\end{hypo}
\newcommand{\YT}{Y_T}
We define the Banach space $\YT$
$$
\YT := \left\{ u \in \D'(\om\times\rr\times(0,T)) \st \frac{u}{1+a} \in L^\infty_{\bfx,a,t}\right\}. %L^2_\bfx\right\}.
$$
endowed with its natural norm $\nrm{u}{\YT}:=\nrm{u/(1+a)}{L^\infty_{\bfx,a,t}}$.
\subsection{Existence and uniqueness for a truncated problem}
\newcommand{\uw}{u_w}
\begin{thm}
Under assumptions \ref{hypo.data.nl}, there is a unique solution
 $(\rho,w)\in L^\infty_tL^1_aL^\infty_\bfx\times Y_T$ solving (\ref{eq.rho.eps.nl}-\ref{eq.u.eps.nl}) where in the latter equation the right hand side  
 is replaced by $T_k(g_w)$, $T_k(g)$ being the usual truncation operator defined as 
 $T_k(g):=\max((-k),\min(g,k))$ for a fixed positive integer $k$ and $g_w$ solves \eqref{eq.g}.
\end{thm}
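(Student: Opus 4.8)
The plan is to build the solution by a Banach fixed point argument on the elongation variable, carried out on a short time interval and then continued, exactly in the spirit of the existence proof for Definition \ref{def.weak.sol.z.eps}. Fix $\e>0$ and $k\in\N$ and, for $T_1\in(0,T]$ to be chosen, define a map $\Phi$ on the closed ball $\mathcal B_R:=\{w\in Y_{T_1}\st \nrm{w}{Y_{T_1}}\leq R\}$ as follows. Given $w\in\mathcal B_R$, we first read off the death rate $\zeta(w)(\bfx,a,t)$; since assumption \ref{hypo.data.nl} gives $\zeta(w)\geq\ztmin>0$ and $|\zeta(w)|\leq\zeta(0)+\ztlip|w|\leq C(1+a)$, the renewal equation \eqref{eq.rho.eps.nl} with this coefficient admits, by the characteristics/renewal argument of Theorem \ref{prop.rho.exist} and Lemma \ref{prop.rho.bounds} (only the lower bound on $\zeta$ enters those estimates), a unique nonnegative $\rho=\rho[w]\in L^\infty_tL^1_aL^\infty_\bfx$ with $\rho[w]\leq Ce^{-\ztmin a}$ and $\muze\in[\muzm,1)$. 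Next we solve \eqref{eq.g} for $g_w$: the integrand $\zeta(w)\rho[w]w$ is dominated by $(1+a)^2e^{-\ztmin a}$ and hence integrable in age, so the right hand side $\int_\rr\zeta(w)\rho[w]w\,da+\e\dt\cS$ lies in $L^\infty((0,T_1);L^2(\om))$, and Lax--Milgram together with elliptic regularity (as in Corollary \ref{coro.elliptic.reg}) yields a unique $g_w\in L^\infty((0,T_1);H^2(\om)\cap H^1_0(\om))$. Finally we truncate, $|T_k(g_w)|\leq k$, and solve the transport equation \eqref{eq.u.eps.nl} with right hand side $T_k(g_w)$; since the source depends on $(\bfx,t)$ only, integration along characteristics gives $u(\bfx,a,t)=\int_0^aT_k(g_w)(\bfx,t-\e(a-s))\,ds$ for $t\geq\e a$ and $u(\bfx,a,t)=\vepsi(\bfx,a-t/\e)+\int_0^{t/\e}T_k(g_w)(\bfx,\e s)\,ds$ for $t<\e a$. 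Hence $|u|\leq k\,a+|\vepsi|$ and, by assumption \ref{hypo.data.nl} $iv)$, $\nrm{\Phi(w)}{Y_{T_1}}\leq k+\nrm{\vepsi/(1+a)}{L^\infty_{\bfx,a}}$ uniformly in $w$; choosing $R$ equal to this value makes $\Phi$ map $\mathcal B_R$ into itself.

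It remains to show $\Phi$ contracts for $T_1$ small. Take $w_1,w_2\in\mathcal B_R$ and set $\hat\rho:=\rho[w_1]-\rho[w_2]$, which solves the linear renewal system of the form \eqref{equ_rhohat_residual} with zero initial datum, vanishing $\cM$, and source $\mathcal R=-(\zeta(w_1)-\zeta(w_2))\rho[w_2]$. Since $|\zeta(w_1)-\zeta(w_2)|\leq\ztlip(1+a)\nrm{w_1-w_2}{Y_{T_1}}$ and $\rho[w_2]\lesssim e^{-\ztmin a}$, the quantity $(1+a)e^{-\ztmin a}$ is bounded, so the Lyapunov estimate of Lemma \ref{lem_convergence_rho} (and the same Duhamel technique as in Lemma \ref{lem.duh.hrho} to recover the weight $(1+a)^2$) gives $\sup_t\int_\rr(1+a)^2|\hat\rho|\,da\leq C\nrm{w_1-w_2}{Y_{T_1}}$ with $C$ independent of $T_1$. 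Splitting $\zeta(w_1)\rho[w_1]w_1-\zeta(w_2)\rho[w_2]w_2$ into its three natural differences and using these bounds with $\rho[w_i]\lesssim e^{-\ztmin a}$ and $|w_i|\lesssim(1+a)$, one controls the difference of the right hand sides of \eqref{eq.g} in $L^\infty((0,T_1);L^2(\om))$ by $C\nrm{w_1-w_2}{Y_{T_1}}$; the difference of the coefficients $\int_\rr\hat\rho\,da$ is a lower order perturbation. Lax--Milgram (coercivity constant $\geq\min(\e,\muzm)$) then gives $\sup_t\nrm{g_{w_1}-g_{w_2}}{H^1(\om)}\leq C\nrm{w_1-w_2}{Y_{T_1}}$, and since $\om=(0,1)$ the embedding $H^1(\om)\hookrightarrow L^\infty(\om)$ upgrades this to an $L^\infty_\bfx$ bound. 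The decisive gain comes from the transport step: $T_k$ is $1$-Lipschitz and, on $[0,T_1]$, the characteristic representation integrates the source over an age window of length at most $T_1/\e$, so
$$
\nrm{\Phi(w_1)-\Phi(w_2)}{Y_{T_1}}\leq \frac{T_1}{\e}\sup_{(0,T_1)}\nrm{g_{w_1}-g_{w_2}}{L^\infty(\om)}\leq \frac{CT_1}{\e}\nrm{w_1-w_2}{Y_{T_1}}.
$$
Choosing $T_1<\e/C$ makes $\Phi$ a contraction, and Banach's theorem yields a unique fixed point $(\rho,w)$ on $[0,T_1]$, which is the sought solution there.

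For the continuation, one restarts on $[T_1,2T_1],\dots$ with $w(\cdot,\cdot,jT_1)$ as initial datum. Because the truncation keeps the source bounded by $k$ and each step widens the $Y$-norm only by the additive amount $kT_1/\e$, the a priori bound stays $\leq\nrm{\vepsi/(1+a)}{L^\infty_{\bfx,a}}+kT/\e$ on all of $[0,T]$; hence $R$ and the constant $C$ can be fixed once and for all and $T_1$ kept uniform, while the common restart datum again cancels in the difference estimate. After finitely many steps this covers $[0,T]$ and produces the unique $(\rho,w)\in L^\infty_tL^1_aL^\infty_\bfx\times Y_T$.

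I expect the contraction estimate to be the main obstacle, specifically the propagation of the Lipschitz dependence of $\zeta$ through the renewal equation for $\rho$ and then through the elliptic solve for $g$: one must control $\hat\rho$ not merely in $L^1_a$ but against the weight $(1+a)^2$ (whence the pointwise Duhamel bounds) so that the age integral defining the source of \eqref{eq.g} is Lipschitz in $w$, and one must convert the $H^1_\bfx$ control of $g_{w_1}-g_{w_2}$ into the $L^\infty_\bfx$ control required by the transport step, which is precisely where the restriction $\om=(0,1)$ is used. The smallness that closes the contraction is then furnished, at fixed $\e$, by the factor $T_1/\e$ arising from the finite age window of the characteristics on a short time interval.
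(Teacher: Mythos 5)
Your proposal is correct and takes essentially the same route as the paper: the identical three-stage map $w\mapsto\rho[w]\mapsto g_w\mapsto T_k(g_w)\mapsto u_w$, the endomorphism bound $\nrm{u_w}{Y_T}\leq k+\nrm{\vepsi/(1+a)}{L^\infty_{\bfx,a}}$, contraction through the one-dimensional embedding $\bhoz\subset C(\ov{\om})$ combined with Lax--Milgram, and continuation based on the contraction time depending only on $k$ and not on the restart datum — you merely spell out the Lipschitz propagation through the renewal and elliptic solves that the paper outsources to parts b) and c) of Theorem 3.2 in \cite{MiOel.3}. One cosmetic remark: under assumptions \ref{hypo.data.nl} only $\beps\geq 0$ is assumed, so the lower bounds $\muzm$ and $\muze\in[\muzm,1)$ you import from the linear section need not hold; this is harmless, since your elliptic problems stay coercive with constant of order $\e$ thanks to the $-\e\Delta$ term alone, which is all the argument uses at fixed $\e$.
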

\newcommand{\hg}{\hat{g}}
\begin{proof}
 We proceed as in Theorem 3.2 \cite{MiOel.3}. Indeed,  for a given 
 $w$, $\rhow$ solves \eqref{eq.rho.eps.nl} with $\zeta(w)$ as the death rate. 
 The density $\rhow$ exists in the sense of characteristics and is unique in $L^\infty_t L^1_aL^\infty_\bfx$ as showed 
 above.
 One then computes \eqref{eq.g} with at the right hand side  $\int_\rr \zeta(w)\rhow w da$ as a first term.
 Then $\uw$ solves \eqref{eq.u.eps.nl}
 with the truncated right hand side  $T_k(g)$.  Since $|T_k(g)|\leq k$, if $w\in\YT$ so is $\uw$ invariably,  since
 $$
% \nrm{\uw}{\YT} \leq k\sqrt{|\om|} + \nrm{\vepsi/(1+a)}{L^\infty_a L^2_\bfx}.
 \nrm{\uw}{\YT} \leq k%\sqrt{|\om|} 
 %+ \nrm{\vepsi/(1+a)}{L^\infty_a L^2_\bfx}.
+ \nrm{\vepsi/(1+a)}{L^\infty_{\bfx}}.%a L^2_\bfx}.
 $$
 At this stage the map $\ov{\Phi}$ is complete $\uw = \ov{\Phi}(w)$ and   $\ov{\Phi}$ is endomorphic.
 Next we prove it is a contraction. 
 $$
 \left| \hg(\bfx,t) \right| \leq \nrm{\hg}{L^\infty_{\bfx}} \leq \omega \nrm{\hg}{\bhoz} 
 \lesssim 
 \nrm{\widehat{\int_{\rr} \zeta(w) \rhow w da}+ \hatmu g_1 }{L^1_{\bfx}} \leq C \nrm{\hatw}{\YT}
 $$ 
where $\hg := g_{w_2} - g_{w_1}$ and so on.  
The second estimate is due to the Sobolev embedding $\bhoz \subset C(\om)$ holding when $n=1$, 
while the third one is the consequence of the Lax-Milgram Theorem.
In order to obtain the last estimate above, we follow   the  steps  in part b) of the proof of   Theorem 3.2 \cite{MiOel.3}, 
the constraction follows
up to a time $T$ small enough. Then it is possbile to show (see part c) in the proof of Theorem 3.1 \cite{MiOel.3})
that the contraction time does not depend on the initial data but on $k$, 
 one concludes the global existence result.
\end{proof}
Theorem \ref{thm.stab.rho.veps} holds as well for $(\rhow,w)$, the solution of the truncated problem.

\subsection{A stability result}
Here the scope is to prove that the truncation constant $k$ can be chosen s.t.
$g$ solving \eqref{eq.g} actually never reaches truncation bounds $\{-k\}\cup\{k\}$. 
%The scope here is to prove that one can chose a $k$ which is never attained 
%by the solutions contrary to the case of blow-up proved without viscosity in  \cite{MiOel.3}.
\begin{prop}\label{prop.borne.zt.veps.rhoe}
 Under assumptions \ref{hypo.data.nl}, let 
 $(\rhow,w)$ be the solution of the fully coupled and stabilized problem  \eqref{eq.rho.eps.nl}-\eqref{eq.u.eps.nl}-\eqref{eq.g},
with the modified source term $T_k(g)$ in \eqref{eq.u.eps.nl}, there exists a positive finite constant $\gamma_2$ s.t. 
 $$
p(t):= \int_{\rr\times\om} \zteps(w(t,a)) |w(t,a) | \rhow(t,a) d\bfx da \leq \gamma_2 ,\quad \forall t \geq 0 \; ,
 $$ 
 where the constant $\gamma_2$ depends on  the {\em a priori} bound on $\int_{\rr} \rhow |w| da$ (obtained in Theorem \ref{thm.stab.rho.veps}) ,
 $\nrm{\dt f}{L^\infty_tL^2_\bfx}$, $\ztlip$, and  $\zeta(0)$,
 but not on $k$.
\end{prop}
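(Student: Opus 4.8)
The plan is to derive a differential inequality for $p(t)=\int_{\om\times\rr}\zeta(w)\,|w|\,\rhow\,da\,d\bfx$ and to close it by a Gronwall argument in which the truncation plays no role. First I would pass to the pointwise-in-$\bfx$ density $P(\bfx,t):=\int_\rr \zeta(w)|w|\rhow\,da$ and use that, along characteristics, $(\e\dt+\da)w=T_k(g)$ and $(\e\dt+\da)\rhow=-\zeta(w)\rhow$. The chain rule then gives $(\e\dt+\da)\big(\zeta(w)|w|\rhow\big)=\rhow\,T_k(g)\big(\zeta'(w)|w|+\zeta(w)\,\sgn w\big)-\zeta(w)^2|w|\rhow$. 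Integrating in age (the boundary term at $a=0$ vanishes since $w(\bfx,0,t)=0$) and then in space yields
$$
\e\dt p+\int_{\om\times\rr}\zeta(w)^2|w|\rhow\,da\,d\bfx=\int_\om T_k(g)\Big(\int_\rr\rhow\big(\zeta'(w)|w|+\zeta(w)\,\sgn w\big)\,da\Big)\,d\bfx,
$$
where the dissipation on the left dominates $\ztmin\,p$.

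Next I would bound the forcing independently of $k$. Since $|T_k(g)|\le|g|$, $|\zeta'|\le\ztlip$ and $\zeta(w)\le\zeta(0)+\ztlip|w|$, the inner integral is controlled by $\int_\rr\rhow(\ztlip|w|+\zeta(w))\,da\le \zeta(0)\muze+2\ztlip\int_\rr\rhow|w|\,da$. Using $\muze<1$ and the uniform a priori bound $\int_{\om\times\rr}\rhow|w|\le C_0$ from Theorem~\ref{thm.stab.rho.veps}, the right-hand side is at most $\nrm{g}{L^\infty_\bfx}\big(\zeta(0)|\om|+2\ztlip C_0\big)$. Replacing $T_k(g)$ by $|g|$ here is exactly what makes every subsequent constant free of $k$.

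The heart of the argument is to estimate $\nrm{g}{L^\infty_\bfx}$ by $p$ itself, so that the loop closes. Because $n=1$, one has $\bhoz\hookrightarrow L^\infty(\om)$ and, dually, $L^1(\om)\hookrightarrow H^{-1}(\om)$. Writing $f:=\int_\rr\zeta(w)\rhow w\,da$, so that $|f|\le P$ and hence $\nrm{f}{L^1_\bfx}\le p$, I would test \eqref{eq.g} by $g$; Lax--Milgram coercivity gives $\min(\muzm,\e)\,\nrm{g}{\bhoz}\lesssim\nrm{f}{H^{-1}(\om)}+\e\nrm{\dt\cS}{H^{-1}(\om)}\lesssim p+\e\nrm{\dt\cS}{L^2_\bfx}$. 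Composing with the Sobolev embedding, $\nrm{g}{L^\infty_\bfx}\lesssim \tfrac{1}{\min(\muzm,\e)}\big(p+\e\nrm{\dt\cS}{L^2_\bfx}\big)$, where $\nrm{\dt\cS}{L^\infty_tL^2_\bfx}$ is the source norm appearing as $\nrm{\dt f}{L^\infty_tL^2_\bfx}$ in the statement.

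Substituting this into the differential inequality produces the closed linear bound $\e\dt p+\ztmin p\lesssim \tfrac{1}{\min(\muzm,\e)}\big(p+\e\nrm{\dt\cS}{L^2_\bfx}\big)\big(\zeta(0)|\om|+2\ztlip C_0\big)$, whose coefficients involve only $\ztlip$, $\zeta(0)$, $C_0$, $\nrm{\dt\cS}{L^\infty_tL^2_\bfx}$ (and the fixed $\e,T$), but never $k$. Since $p(0)<\infty$ by the data assumptions~\ref{hypo.data.nl}, Gronwall's lemma then yields $p(t)\le\gamma_2$ on $[0,T]$. I expect the main obstacle to be precisely this closure: $g$ is governed by the unknown first moment $f$, which is in turn bounded by $p$, so the estimate feeds back on itself, and controlling it in a $k$-independent way forces one to measure the source of \eqref{eq.g} in $H^{-1}$ through the one-dimensional embedding $L^1\hookrightarrow H^{-1}$ (matching the $L^1$-type quantity $p$, rather than an $L^2$ moment that cannot be bounded by $p$) while discarding the truncation via $|T_k(g)|\le|g|$. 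A secondary technical point is justifying the age integration by parts and the decay of $\zeta(w)^2|w|\rhow$ as $a\to\infty$, both of which follow from the characteristic representation and the exponential decay of $\rhow$.
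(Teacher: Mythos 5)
Your opening steps coincide with the paper's proof: the transport identity for $\zeta(w)|w|\rhow$ along characteristics, the bound of the forcing by $\nrm{T_k(g)}{L^\infty_\bfx}\bigl(\zeta(0)|\om|+2\ztlip C_0\bigr)$ using Theorem \ref{thm.stab.rho.veps}, and the $k$-independence mechanism ($|T_k(g)|\le |g|$ combined with the one-dimensional chain $L^1(\om)\hookrightarrow H^{-1}(\om)$ and $\bhoz\hookrightarrow L^\infty(\om)$, so that $\nrm{g}{L^\infty_\bfx}\lesssim \e^{-1}\bigl(p+\e\nrm{\dt\cS}{H^{-1}_\bfx}\bigr)$) are all exactly the paper's. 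The genuine gap is in the closure. You throw away the quadratic structure of the dissipation, retaining only $\int\zeta(w)^2|w|\rhow \ge \ztmin\, p$, and then invoke linear Gronwall. But the feedback coefficient you have just produced is of order $\omega/\e$ times $\bigl(\zeta(0)|\om|+2\ztlip C_0\bigr)$, which in the relevant regime dwarfs $\ztmin$: your inequality reads $\e\dt p\le (C/\e-\ztmin)\,p+C'$, so Gronwall only yields $p(t)\lesssim e^{Ct/\e^2}$. That is finite on each $[0,T]$ (and would still suffice to run Corollary \ref{coro.ex.glob} on finite intervals), but it grows without bound as $T\to\infty$ and depends on $T$, whereas the proposition asserts $p(t)\le\gamma_2$ for \emph{all} $t\ge 0$ with $\gamma_2$ depending only on the listed quantities; this time-uniform bound is also what underlies the later asymptotic-profile discussion.

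The paper closes the loop differently, and this is the one idea your proposal is missing: keep the dissipation quadratic. Since $|w|\rhow\,/\int_{\om\times\rr}|w|\rhow\,da\,d\bfx$ is a unit measure and $\int_{\om\times\rr}|w|\rhow\,da\,d\bfx\le\gamma_1^{-1}$ uniformly in time (Theorem \ref{thm.stab.rho.veps} again — the same bound you used only on the forcing side), Jensen's inequality gives
\begin{equation*}
\int_{\om\times\rr} \zeta(w)^2 |w|\rhow \, da\, d\bfx \;\ge\; \frac{\left( \int_{\om\times\rr} \zeta(w)|w|\rhow \, da\, d\bfx \right)^2}{\int_{\om\times\rr} |w|\rhow \, da\, d\bfx} \;\ge\; \gamma_1\, p^2 ,
\end{equation*}
which turns the estimate into the Riccati inequality $\e\dt p+\gamma_1 p^2\le h+\omega p/\e$. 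The comparison lemma (A.1 in the appendix of \cite{MiOel.3}) then bounds $p(t)$ by $\max\bigl(p(0),P_+\bigr)=:\gamma_2$, where $P_+$ is the larger root of $\gamma_1P^2-\omega P/\e-h=0$ — a constant independent of $t$ and of $k$, exactly as claimed. So the quadratic dissipation is not a technicality one may relax to a linear one; it is the absorbing mechanism that makes the large linear feedback $\omega p/\e$ harmless for large times, and without it the statement as written is not reached.
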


\begin{proof}
 Using equations \eqref{eq.rho.eps.nl}, \eqref{eq.u.eps.nl} and hypotheses on $\zeta$, one has
$$
\e \dt (\rhow | w | \zteps )  + \da (\rhow | w | \zteps ) +  \zteps^2 | w | \rhow \leq %\\
 \rhow |w| ( \e \dt \zteps + \da \zteps )  
 +  \zteps \rhow | T_k(g) | \; .
$$
 Integrating in age and space gives 
 \begin{equation}\label{eq.p}
%\left\{
 \begin{aligned}
   \e \dt p  & + \int_{\rr\times\om} \zteps^2   | w(t,a) | \rhow(t,a)  d \bfx da  \leq  \nrm{T_k(g)}{L^\infty_\bfx} 
\int_{\rr\times\om} ( \ztlip \rhow |w| + \zeta(w) \rhow(t,a) ) d \bfx da  \\
  & \leq \nrm{T_k(g)}{L^\infty_\bfx}  \left(  2 \ztlip\int_{\rr} \rhow |w| \, da  + \zeta(0) \right)   \leq ( 2 \ztlip / \gamma_1 + \zeta(0))
  \nrm{T_k(g)}{L^\infty_\bfx} , 
\end{aligned}
%\right.
\end{equation}
where we use Theorem \ref{thm.stab.rho.veps} to bound the braquets on right hand side, and we denote 
$$
\int_{\rr\times\om} \rhow |w| d\bfx da \leq \gamma_1^{-1}.
$$
As the domain is one-dimensional, one has the embedding $\bhoz \subset C(\om)$ and 
thus there exists a constant $\omega$ independent of $g$ s.t. 
$$
\begin{aligned}
 \nrm{T_k(g(\cdot,t))}{L^\infty_\bfx} & \leq 
  \nrm{g(\cdot,t)}{L^\infty_\bfx} \leq \omega \nrm{g(\cdot,t)}{\bhoz} \leq  \frac{\omega}{\e} \nrm{ \int_\rr \zeta(w)\rhow w da+ \e \dt \cS }{H^{-1}_\bfx(\om)}\\
  & \leq \frac{\omega}{\e}  \nrm{ \int_\rr \zeta(w)\rhow w da  }{L^1_\bfx}+  \omega \nrm{ \dt \cS }{H^{-1}_\bfx} %\\
  \leq \frac{\omega}{\e} p + \omega \nrm{ \dt \cS }{H^{-1}_\bfx} 
\end{aligned}
  $$
Now we consider the second term in the left hand of \eqref{eq.p}~: using Jensen's inequality one writes
$$
\left( \frac{ \int_{\rr\times\om} \zeta(w) | w(\bfx,a,t) | \rhow(\bfx,a,t) d\bfx da}{\int_{\rr\times\om} | w | \rhow d\bfx da }\right)^2 
\leq \frac{ \int_{\rr\times\om} (\zteps(w))^2 | w(\bfx,a,t) | \rhow(\xat) d\bfx da}{\int_{\rr\times\om} | w | \rhow d\bfx da} \;,
$$
since $| w| \rhow / \int_{\rr\times\om} |w| \rhow \, d\bfx da $ is a unit measure on $\om\times\rr$. 
This implies that
$$
\int_{\om\times\rr} (\zteps(w))^2 | w(\xat) | \rhow(t,a) \; da d\bfx
\geq \frac{ \left( \int_{\om\times\rr} \zeta(w) | w(\xat) | \rhow(\xat) da d\bfx \right)^2 }{\int_{\om\times\rr}  |w| \rhow da d\bfx} 
\geq \gamma_1 p^2 \; .
$$
We obtain a Riccati inequality
$$
\e \dt p + \gamma_1 p^2 \leq h +   \omega p/\e  \;,\quad p(0) = \int_{\rr} \zteps(\vepsi(a))|\vepsi(a)|\rhoi(a)\; da \;, 
$$
where $h:=  \omega \nrm{\dt \cS}{L^\infty_t H^{-1}_\bfx} \left(  2 \ztlip / \gamma_1  + \zeta(0) \right)$ is a constant. 
We denote by $P_\pm$ the  solutions 
of the steady state equation associated to the last inequality, {\em i.e.} $P$ solves  $\gamma_1 P^2 -  P/\e -h= 0$. 
The solutions are given by 
$$
P_\pm =\left(  \frac{\omega}{\e} \pm \sqrt{\frac{\omega^2}{\e^2}  + 4 h  \gamma_1}\right)/(2\gamma_1) 
\leq\max\left( p(0), \left(  \omega + \sqrt{\omega^2  + 4 h  \gamma_1 \e^2 }\right)/(2 \e \gamma_1)\right)  =: \gamma_2.
$$
 Applying A.1 in the appendix \cite{MiOel.3},
we conclude that $p(t) \leq \max\{p(0), P_+\}\leq \gamma_2$,
%Estimating the latter right hand side, one can show that there exist $\gamma_2$ depending only on 
%the bound of $\gamma_1$ (which bounds $\int_{\rr} \rhow | w |da$), $\nrm{\dt f}{L^\infty(0,T)}$, $\ztlip$ and $\zeta(0)$,
%s.t. $p(t) \leq \gamma_2/\umu$, 
which ends the proof.
\end{proof}
\begin{coro}\label{coro.ex.glob}
 Under hypotheses \ref{hypo.data.nl}, there exists a unique global solution of (\ref{eq.rho.eps.nl}-\ref{eq.u.eps.nl}-\ref{eq.g}).
\end{coro}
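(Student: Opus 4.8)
The plan is to show that the truncation introduced in the construction of the truncated problem becomes inactive once the integer $k$ is chosen large enough, so that the (unique, global) solution of the truncated system is in fact a genuine solution of the full system \eqref{eq.rho.eps.nl}-\eqref{eq.u.eps.nl}-\eqref{eq.g}. The whole point is that the a priori bound of Proposition \ref{prop.borne.zt.veps.rhoe} is \emph{independent of $k$}.

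First I would recall that, by the existence theorem for the truncated problem, for every fixed positive integer $k$ there is a unique global solution $(\rhow,w)=(\rho_k,w_k)\in L^\infty_tL^1_aL^\infty_\bfx\times Y_T$, with associated potential $g_w$ solving \eqref{eq.g}. By Proposition \ref{prop.borne.zt.veps.rhoe} this solution satisfies $p(t)\leq\gamma_2$ for all $t\geq 0$, with $\gamma_2$ depending on $\e$, on the stability bound from Theorem \ref{thm.stab.rho.veps}, on $\ztlip$, $\zeta(0)$ and $\nrm{\dt\cS}{L^\infty_tH^{-1}_\bfx}$, but crucially \emph{not} on $k$. Reinjecting this into the $L^\infty_\bfx$ estimate established inside the proof of that proposition (which uses the one-dimensional embedding $\bhoz\subset C(\om)$), I get a bound
$$
\nrm{g_w(\cdot,t)}{L^\infty_\bfx}\leq \frac{\omega}{\e}\,p(t)+\omega\nrm{\dt\cS}{H^{-1}_\bfx}\leq \frac{\omega}{\e}\gamma_2+\omega\nrm{\dt\cS}{L^\infty_tH^{-1}_\bfx}=:\bar K,
$$
valid for a.e.\ $(\bfx,t)$, uniformly in $t$ and, most importantly, uniformly in $k$.

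Next I would choose any integer $k>\bar K$. Since $|g_w(\bfx,t)|\leq\bar K<k$ for a.e.\ $(\bfx,t)$, the truncation operator acts as the identity, $T_k(g_w)=g_w$, and therefore the source term in \eqref{eq.u.eps.nl} coincides with the untruncated one. Consequently the pair $(\rho_k,w_k)$, together with $g_{w_k}$ solving \eqref{eq.g}, is a global-in-time solution of the original coupled system \eqref{eq.rho.eps.nl}-\eqref{eq.u.eps.nl}-\eqref{eq.g}, which proves existence for arbitrary $T$ (possibly infinite), with no restriction on the data.

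For uniqueness I would observe that any global solution of the full problem automatically enjoys the same a priori bound: the Riccati argument of Proposition \ref{prop.borne.zt.veps.rhoe} only uses the inequality $|T_k(g)|\leq\nrm{g}{L^\infty_\bfx}$, which for the untruncated equation holds with equality, so the computation goes through verbatim and yields $\nrm{g}{L^\infty_\bfx}\leq\bar K$. Hence any such solution solves the truncated problem for every $k>\bar K$, whose solution is unique by the truncated existence theorem; uniqueness for the full problem follows. The main obstacle is precisely the $k$-independence of $\gamma_2$ (already secured by the proposition) combined with the pointwise-in-$\bfx$ control of $g$: the contraction argument in the truncated setting only controls $g$ in $L^1_\bfx$, and it is the Sobolev embedding available in dimension one that upgrades this to the $L^\infty_\bfx$ threshold needed to compare $|g|$ with $k$ and deactivate the truncation.
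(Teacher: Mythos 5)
Your proof is correct and follows essentially the same route as the paper: use the $k$-independence of the bound $\gamma_2$ from Proposition~\ref{prop.borne.zt.veps.rhoe}, upgrade it to an $L^\infty_\bfx$ bound on $g$ via the one-dimensional embedding $\bhoz\subset C(\om)$, choose $k$ above that threshold so the truncation is never active, and deduce uniqueness from uniqueness of the truncated problem. Your write-up is in fact slightly more careful than the paper's — you keep the embedding constant $\omega$ in the threshold and spell out why any solution of the untruncated system also satisfies the a priori bound (so that it coincides with the truncated solution), both points the paper leaves implicit.
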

\begin{proof}
 It suffices to take $k > \gamma_2/\e + \nrm{\dt \cS}{L^\infty_t H^{-1}_\bfx}$, and it is clear from above
 that $g$ never reaches $k$ a.e. $\bfx,t$. Thus the solution $(\rhow,w)$ is also the solution of
 (\ref{eq.rho.eps.nl}-\ref{eq.u.eps.nl}-\ref{eq.g}) without truncating $g$. Thus existence is proved.
 Since the truncated solution pair is unique so is the latter one.
\end{proof}

\subsection{If $\bmin>0$, the total bonds' population  never vanishes}

Once a global $L^\infty_{\xat}$ bound is proved for $g$ solving \eqref{eq.g}, one should use again arguments 
of Lemma 4.1, Proposition 4.2 and 4.3 \cite{MiOel.3} and prove exactly in the same way~:
\begin{thm}\label{thm.positive.for.ever}
 Under assumptions \ref{hypo.data.nl}, if $\beps\geq\bmin>0$ and $\nrm{\mu_{0,I}}{L^\infty_\bfx} \leq \gamma_0 < 1$, then
the solution $(\rhoe,\veps)$ of (\ref{eq.rho.eps.nl}-\ref{eq.u.eps.nl}-\ref{eq.g}) satisfies 
\begin{enumerate}[i)]
 \item defining $\gamma_1>0$ as $\gamma_1 <\min\left( 1-\gamma_0, \ztmin/(\ztmin + \bmax)\right)$
 $$
 \nrm{\muze}{L^\infty_\bfx} \leq 1- \gamma_1,\quad \forall t>0,
 $$
 \item this in turn implies that
 $$
 \frac{ \int_\rr \zeta(\veps(\xat))\rhoe(\xat)da }{\int_\rr \rhoe(\xat)da} \leq \zeta(0) + C \left( 1 + \nrm{\frac{\vepsi}{1+a}}{L^\infty_{\bfx,a}} \right)\frac{2 }{\gamma_1 \bmin} \nrm{g}{L^\infty_{\bfx,t}} =: \gamma_2
 $$
 for almost every $\bfx $ in $ \om$.
 \item choosing $\mumin>0$ s.t.
 $$
 \mumin < \min\left( \inf_{\bfx \in \om} \mu_{0,I}(\bfx), \frac{\bmin}{\bmin+\gamma_2} \right)
 $$
 one guarantees that
 $$
 \muze (\bfx,t) \geq \mumin,\quad \alev \bfx \in \om, \quad \forall t >0.
 $$
\end{enumerate}
\end{thm}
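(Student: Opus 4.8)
The plan is to reduce all three claims to the scalar balance law for the total population, obtained by integrating \eqref{eq.rho.eps.nl} over the age variable. Since $\rhoe(\bfx,a=0,t)=\beps(1-\muze)$ and $\rhoe(\bfx,\cdot,t)$ vanishes at $a=+\infty$, integration in $a$ gives, for almost every $\bfx$,
\begin{equation*}
\e\dt\muze = \beps(\bfx,t)\bigl(1-\muze(\bfx,t)\bigr) - \int_\rr \zeta(\veps)\rhoe\,da .
\end{equation*}
Each statement then follows from a barrier (invariant-region) argument applied to this ODE in $t$, with all constants kept independent of the truncation level $k$ thanks to the global $L^\infty_{\bfx,t}$ bound on $g$ furnished by Proposition \ref{prop.borne.zt.veps.rhoe} and Corollary \ref{coro.ex.glob}. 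This is exactly the strategy of Lemma 4.1, Proposition 4.2 and Proposition 4.3 in \cite{MiOel.3}, now carried out with the elongation-weighted quantities of the present setting.

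For claim $i)$ I would bound the loss term from below by $\ztmin\muze$ (using $\zeta\geq\ztmin$) and the birth term from above by $\bmax(1-\muze)$, so that $\e\dt\muze \leq \bmax(1-\muze)-\ztmin\muze$. Evaluating the right-hand side at the candidate barrier $\muze=1-\gamma_1$ gives $\bmax\gamma_1-\ztmin(1-\gamma_1)$, which is $\leq 0$ precisely because $\gamma_1<\ztmin/(\ztmin+\bmax)$. The second constraint $\gamma_1<1-\gamma_0$ together with $\nrm{\mu_{0,I}}{L^\infty_\bfx}\leq\gamma_0$ places the initial datum strictly below the barrier, so the comparison principle forces $\muze(\bfx,t)\leq 1-\gamma_1$ for all $t>0$. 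In particular $1-\muze\geq\gamma_1$, whence the birth rate stays bounded below: $\rhoe(\bfx,0,t)=\beps(1-\muze)\geq\bmin\gamma_1$.

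Claim $ii)$ is the delicate step. Using that $\zeta$ is Lipschitz with $\zeta(\veps)\leq\zeta(0)+\ztlip|\veps|$, the weighted average splits as
\begin{equation*}
\frac{\int_\rr\zeta(\veps)\rhoe\,da}{\int_\rr\rhoe\,da}\leq \zeta(0)+\ztlip\,\frac{\int_\rr|\veps|\rhoe\,da}{\muze},
\end{equation*}
so everything reduces to controlling the ratio $r:=\int_\rr|\veps|\rhoe\,da/\muze$. Multiplying \eqref{eq.u.eps.nl} by $\rhoe$ and taking absolute values yields $(\e\dt+\da)(\rhoe|\veps|)+\zeta(\veps)\rhoe|\veps|\leq\rhoe|g|$; integrating in $a$ and using $\veps(\bfx,0,t)=0$ gives $\e\dt\int_\rr\rhoe|\veps|\,da+\ztmin\int_\rr\rhoe|\veps|\,da\leq\nrm{g}{L^\infty_{\bfx,t}}\muze$. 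Combining this with the balance law and the lower bound $\beps(1-\muze)\geq\bmin\gamma_1$ from $i)$ — fresh mass being injected at $a=0$ where $|\veps|=0$, which dilutes the average — I expect a closed linear differential inequality of the form $\e\dt r\leq\nrm{g}{L^\infty_{\bfx,t}}-\bmin\gamma_1\,r$, whence $r$ remains bounded by $r(0)+\nrm{g}{L^\infty_{\bfx,t}}/(\bmin\gamma_1)$ uniformly in time and in $k$, with $r(0)$ controlled by $\nrm{\vepsi/(1+a)}{L^\infty_{\bfx,a}}$; this reproduces the announced constant $\gamma_2$. The main obstacle lies here: unlike in \cite{MiOel.3}, $\zeta$ carries no upper bound and $g$ is coupled to the Laplacian through the elliptic problem \eqref{eq.g}, so keeping the denominator $\muze$ from degenerating relies crucially on the lower bound of $i)$, keeping the numerator finite relies on the global $L^\infty$ control of $g$, and the cross term $\int\zeta\rhoe|\veps|$ versus $r\int\zeta\rhoe$ (a selection/covariance effect, favourable when the elongation stays nonnegative) must be handled carefully, using the uniform stability estimate of Theorem \ref{thm.stab.rho.veps}, which holds for the truncated pair $(\rhow,w)$.

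Finally, for claim $iii)$ I would feed the bound of $ii)$, namely $\int_\rr\zeta(\veps)\rhoe\,da\leq\gamma_2\muze$, back into the balance law to obtain
\begin{equation*}
\e\dt\muze\geq\bmin(1-\muze)-\gamma_2\muze=\bmin-(\bmin+\gamma_2)\muze .
\end{equation*}
The right-hand side is strictly positive at the candidate barrier $\muze=\mumin$ exactly because $\mumin<\bmin/(\bmin+\gamma_2)$, and the choice $\mumin<\inf_{\bfx}\mu_{0,I}$ puts the initial datum strictly above it; the comparison principle then yields $\muze(\bfx,t)\geq\mumin$ for all $t>0$ and almost every $\bfx$, which is the non-extinction statement. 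Since every constant produced depends only on the data and on $\gamma_1,\gamma_2$, not on $k$, the conclusion transfers verbatim to the genuine solution of \eqref{eq.rho.eps.nl}-\eqref{eq.u.eps.nl}-\eqref{eq.g}.
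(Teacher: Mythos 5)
Your parts i) and iii) are fine, and your overall plan is the one the paper intends: the paper gives no self-contained proof of this theorem, stating only that once the $k$-independent $L^\infty_{\bfx,t}$ bound on $g$ is secured (Proposition \ref{prop.borne.zt.veps.rhoe}, Corollary \ref{coro.ex.glob}) one repeats Lemma 4.1 and Propositions 4.2--4.3 of \cite{MiOel.3}. Your barrier arguments for $\muze\le 1-\gamma_1$ and $\muze\ge\mumin$, and your remark that all constants are independent of the truncation level, coincide with that deferred argument.

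The genuine gap is in your claim ii). Set $N(t):=\int_\rr\rhoe|\veps|\,da$, $r:=N/\muze$, and let $d\pi_t:=\rhoe(\bfx,a,t)\,da/\muze(\bfx,t)$ be the age distribution at fixed $(\bfx,t)$. Differentiating the quotient with your two (correct) inequalities $\e\dt N+\int_\rr\zeta(\veps)\rhoe|\veps|\,da\le \nrm{g}{L^\infty_{\bfx,t}}\muze$ and $\e\dt\muze=\beps(1-\muze)-\int_\rr\zeta(\veps)\rhoe\,da$ yields
\begin{equation*}
\e\,\dt r \;\le\; \nrm{g}{L^\infty_{\bfx,t}} \;-\; \frac{\beps(1-\muze)}{\muze}\,r \;-\; \operatorname{Cov}_{\pi_t}\bigl(\zeta(\veps),|\veps|\bigr),
\end{equation*}
so the dilution term you describe is indeed favourable ($\beps(1-\muze)/\muze\ge\bmin\gamma_1$ by part i) and $\muze\le1$), but your announced linear inequality $\e\dt r\le\nrm{g}{L^\infty_{\bfx,t}}-\bmin\gamma_1 r$ additionally requires $\operatorname{Cov}_{\pi_t}(\zeta(\veps),|\veps|)\ge0$. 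Nothing in Assumptions \ref{hypo.data.nl} provides this sign: $\zeta$ is only Lipschitz and bounded below, with no monotonicity in $|u|$, and the nonnegativity of $\veps$ that you invoke as making the cross term favourable is not available under the hypotheses of the present theorem --- Theorem \ref{thm.positivity.veps} needs the extra assumptions $\vepsi\ge0$ and $\dt\cS\ge0$, which are not made here (and even for $\veps\ge0$ a non-monotone $\zeta$ can produce negative covariance). Writing that you ``expect'' the closed inequality and that the cross term ``must be handled carefully'' flags exactly the point where the proof is missing; as it stands the Gr\"onwall argument for $r$ does not close.

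The route the paper points to (Proposition 4.2 of \cite{MiOel.3}) avoids the quotient ODE altogether. From the characteristics of \eqref{eq.u.eps.nl} one gets the pointwise, linear-in-age estimate $|\veps(\bfx,a,t)|\le a\,\nrm{g}{L^\infty_{\bfx,t}}$ for $t\ge\e a$, and $|\veps(\bfx,a,t)|\le(1+a)\bigl(\nrm{\vepsi/(1+a)}{L^\infty_{\bfx,a}}+\nrm{g}{L^\infty_{\bfx,t}}\bigr)$ otherwise; this is the source of the factor $\bigl(1+\nrm{\vepsi/(1+a)}{L^\infty_{\bfx,a}}\bigr)\nrm{g}{L^\infty_{\bfx,t}}$ in $\gamma_2$. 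One then bounds $\int_\rr\zeta(\veps)\rhoe\,da\le\zeta(0)\muze+\ztlip\int_\rr(1+a)\rhoe|\cdot|$-type terms and controls the resulting $(1+a)$-weighted mass against $\muze$ using the birth-flux lower bound $\rhoe(\bfx,0,t)=\beps(1-\muze)\ge\bmin\gamma_1$ furnished by part i) --- this is where the factor $2/(\gamma_1\bmin)$ in $\gamma_2$ originates --- with no covariance or monotonicity input at any point. Replacing your ratio ODE by this pointwise-in-$(a,t)$ argument repairs step ii); your steps i) and iii) can then be kept verbatim.
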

This result proves that it is not possible to have extinction of bonds at the contrary to the situation 
observed in \cite{MiOel.3}.

\subsection{Equivalence with the initial formulation}

\begin{lem}
Under  hypotheses \ref{hypo.data.nl}, if $(\rhoe,\veps)$ solves (\ref{eq.rho.eps.nl}-\ref{eq.u.eps.nl}-\ref{eq.g}),
then defining 
$
\zeps(\bfx,t):=\int_0^t g(\bfx,s) ds + \zeps(\bfx,0)
$
where $\zeps(\bfx,0)$ is the solution of \eqref{eq.zeps.t.0.nl},
one has \eqref{eq.def.veps} and 
$(\rhoe,\zeps)\in C_tL^1_aL^\infty_\bfx\times C^1_t L^\infty_\bfx$ solves \eqref{eq.rho.eps.nl} coupled with \eqref{eq.z.eps}.
\end{lem}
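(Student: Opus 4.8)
The plan is to run the reduction of \cite{MiOel.3} in reverse: starting from the triple $(\rhoe,\veps,g)$ delivered by the velocity formulation \eqref{eq.rho.eps.nl}--\eqref{eq.u.eps.nl}--\eqref{eq.g}, I reconstruct a position $\zeps$ and verify that the pair $(\rhoe,\zeps)$ satisfies the original Volterra coupling \eqref{eq.z.eps} (carrying along its source term). First I record the structural facts. By construction $\dt\zeps=g$, so it suffices to control $g$ in time: the uniform $L^\infty_\bfx$ bound on $g$ obtained from \eqref{eq.g} via the one-dimensional embedding $\bhoz\subset C(\om)$, together with the time continuity of the right-hand side of \eqref{eq.g} and continuous dependence for the operator $\muze-\e\Delta$, gives $g\in C_tL^\infty_\bfx$ and hence $\zeps\in C^1_tL^\infty_\bfx$. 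The regularity $\rhoe\in C_tL^1_aL^\infty_\bfx$ is inherited from the existence theory for \eqref{eq.rho.eps.nl}.

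The heart of the argument is to identify $\veps$ with the finite difference \eqref{eq.def.veps}. I introduce the candidate $\tilde\veps(\bfx,a,t):=(\zeps(\bfx,t)-\zeps(\bfx,t-\e a))/\e$ when $t\geq\e a$ and $(\zeps(\bfx,t)-\zp(\bfx,t-\e a))/\e$ otherwise, and check by direct differentiation that $(\e\dt+\da)\tilde\veps=\dt\zeps=g$, that $\tilde\veps(\bfx,0,t)=0$, and that $\tilde\veps(\bfx,a,0)=\vepsi(\bfx,a)$, the last two using $g=\dt\zeps$ and the definition of $\vepsi$. Since $\veps$ solves \eqref{eq.u.eps.nl} with exactly these characteristic data, uniqueness along the characteristics $a-t/\e=\mathrm{const}$ of $\e\dt+\da$ forces $\veps=\tilde\veps$, which is precisely \eqref{eq.def.veps}; the threshold $t=\e a$ separates the two branches.

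It remains to show $\zeps$ satisfies the Volterra equation, i.e. that $W(\bfx,t):=\int_\rr\rhoe\veps\,da-\Delta\zeps-\cS$ vanishes. Multiplying \eqref{eq.u.eps.nl} by $\rhoe$, \eqref{eq.rho.eps.nl} by $\veps$, adding and integrating in age, and using $\veps(\bfx,0,t)=0$ together with the decay of $\rhoe\veps$, I obtain $\e\dt\int_\rr\rhoe\veps\,da+\int_\rr\zeta(\veps)\rhoe\veps\,da=\muze\,g$. Replacing $\muze\,g$ by its value from \eqref{eq.g} cancels the $\int_\rr\zeta(\veps)\rhoe\veps\,da$ term and leaves $\dt\int_\rr\rhoe\veps\,da=\dt(\Delta\zeps+\cS)$ in $\D'$, that is $\dt W=0$. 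Evaluating at $t=0$, I expand $\int_\rr\rhoi\vepsi\,da=\e^{-1}\bigl(\mu_{0,I}\,\zeps(\bfx,0)-\int_\rr\zp(\bfx,-\e a)\rhoi\,da\bigr)$ and substitute the defining relation \eqref{eq.zeps.t.0.nl} for $\zeps(\bfx,0)$; a direct computation then shows $W(\bfx,0)=0$, whence $W\equiv0$ for all $t\geq0$.

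The main difficulty is not any single display but the rigorous justification at the regularity actually available: reading \eqref{eq.u.eps.nl}--\eqref{eq.rho.eps.nl} along characteristics so that the product rule and the integration by parts in $a$ are legitimate, commuting $\Delta$ and $\dt$ in the distributional identity $\dt W=0$, and securing the time continuity of $g$ that underlies both the $C^1_t$ statement and the pointwise evaluation of $W$ at $t=0$. The most delicate point is the bookkeeping of the source and initial-data normalisations in \eqref{eq.g} and \eqref{eq.zeps.t.0.nl}: these must be aligned so that the $t$-independent quantity $W$ is identically zero rather than merely constant in time.
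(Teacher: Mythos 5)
Your proposal is correct and follows essentially the same route as the paper: recovery of \eqref{eq.def.veps} by uniqueness along the characteristics of $\e\dt+\da$, then the age-integrated identity $\e\dt\int_\rr\rhoe\veps\,da+\int_\rr\zeta(\veps)\rhoe\veps\,da=\muze g$ combined with \eqref{eq.g} so that the nonlinear terms cancel, integration in time, and the relation \eqref{eq.zeps.t.0.nl} to fix the constant --- your ``conserved quantity'' $W$ is exactly the paper's integrated identity, which the paper keeps in weak form tested against $v\in\bhoz$ while you write it distributionally (and you additionally spell out the $C^1_tL^\infty_\bfx$ regularity of $\zeps$, which the paper leaves implicit). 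Your closing remark on aligning the normalisations is well taken: as printed, \eqref{eq.zeps.t.0.nl} carries $\cS(\bfx,0)$ where consistency with the source $\e\dt\cS$ in \eqref{eq.g} requires $\e\,\cS(\bfx,0)$ (otherwise your direct computation yields $W(\bfx,0)=(\e^{-1}-1)\cS(\bfx,0)$, not $0$), a scaling slip the paper's own proof also glosses over in the step ``because of the definition of $\vepsi$ and $\zeps(\bfx,0)$''.
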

% \begin{proof}
% Using the method of characteristics, starting from \eqref{eq.u.eps.nl} one recovers
% by definition of $\zeps$ \eqref{eq.def.veps}.
% As $g=\dt \zeps$, and thanks to \eqref{eq.def.veps} and \eqref{eq.g}, one has
% %integrating in time one write :
% $$
% (\muze-\e \Delta) \dt \zeps = \int_\rr \zeta (\veps) \rho \veps da + \e \dt f
% %\left( \frac{\zeps(\bfx,t)-\zeps(\bfx,t-\e a)}{\e} \right) 
% $$
% note that thanks to \eqref{eq.def.veps}, $\veps$ is now a function of $\zeps$.
% Integrating in time one writes :
% $$
% (\muze - \e \Delta) \zeps = \int_0^t \left\{\int_\rr \zeta  \rho \veps da + \e \dt f + \dt \muze \zeps \right\} ds 
% $$
% One has, using \eqref{eq.rho.eps.nl} and \eqref{eq.def.veps}, that
% $$
% \int_\rr \zeta  \rho \veps da + \dt \muze \zeps = \ue \left( \beps(1-\muze)\zeps - \int_\rr \zeta \rho \zeps(\bfx,t-\e a) da \right)
% $$
% then considering the transport problem solved by $\rhoe(\bfx,a,t)\zeps(\bfx,t-\e a)$ one concludes easily that
% $$
% \e \dt \int_\rr \rhoe(\bfx,a,t)\zeps(\bfx,t-\e a) da - \beps(1-\muze) + \int_\rr \zeta \rhoe \zeps da =0
% $$ which ends the proof thanks to \eqref{eq.zeps.t.0}.
% \end{proof}

\begin{proof}
Using the method of characteristics, starting from \eqref{eq.u.eps.nl}, one recovers
by definition of $\zeps$ \eqref{eq.def.veps}.
 Using \eqref{eq.u.eps.nl} and integrating against $\rhoe$, one has
 $$
 \begin{aligned}
&  \left( \e \dt \int_{\rr} \rhoe \veps da  + \int_\rr \zeta \rhoe \veps da ,v\right)= \\
& \quad= (\muze g,v) = - \e (\nabla g,\nabla v)  + \left( \int_{\rr} \zeta \rhoe \veps da + \e \dt \cS , v \right),\quad \forall v \in \bhoz,  
 \end{aligned}
 $$
where the exterior barquets denote the scalar product in $L^2_\bfx$.
  After a simplification and integration in time, the latter expression becomes :
 $$
\left(  \int_{\rr} \rhoe \veps da,v\right) =  - \left( \nabla \int_0^t g(\bfx,s) ds ,\nabla v \right)
+ \left( \cS(\bfx,t)- \cS(\bfx,0) + \int_\rr \rhoi \vepsi da ,v \right),
 $$
 but because of the definition of $\vepsi$ and $\zeps(\bfx,0)$ one recovers that
  $$
  \begin{aligned}
 \left( \int_{\rr} \rhoe \veps da,v\right) & = - \left( \nabla \left( \int_0^t g(\bfx,s) ds + \zeps(\bfx,0)\right),\nabla v\right) + (\cS(\bfx,t),v) \\
& = -(\nabla \zeps(\bfx,t),\nabla v) + (\cS(\bfx,t),v),    
  \end{aligned}
 $$
 which ends the proof.
\end{proof}

\subsection{Positivity and concluding remarks}

\begin{thm}\label{thm.positivity.veps}
Under assumptions \ref{hypo.data.nl}, if moreover  $\vepsi(a)\geq0$ for a.e. $(\bfx,a)\in \rr$ and  $\dt \cS(\bfx,t) \geq 0$ for a.e. $(\bfx,t)\in\om\times (0,T)$, 
%\end{enumerate}
then $ \veps(\xat)$ 
%where $(\rhoe,\veps)$ is  unique solution 
% of \eqref{eq.rho.eps}-\eqref{eq.veps}, 
is non-negative for a.e. $(\xat) \in \om\times(\rr)^2$.
\end{thm}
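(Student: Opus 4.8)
The plan is to show that nonnegativity of $\veps$ is inherited directly from the fixed-point construction underlying Corollary \ref{coro.ex.glob}. Concretely, I would prove that the closed convex cone
$$
Y_T^+ := \{ u \in \YT \st u \geq 0 \ \text{a.e. in } \om\times\rr\times(0,T)\}
$$
is invariant under the map $\ov{\Phi}: w \mapsto u_w$ introduced in the proof of the truncated existence theorem. Since $\ov{\Phi}$ is a contraction on $\YT$ with a unique global fixed point $\veps$, and $Y_T^+$ is closed, the restriction $\ov{\Phi}|_{Y_T^+}$ has its unique fixed point in $Y_T^+$, which must coincide with $\veps$; hence $\veps \geq 0$. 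As the contraction time depends only on $k$ and on the structural constants, I would iterate over finitely many subintervals covering $(0,T)$, using $\veps(\cdot,\cdot,T_j)\geq 0$ as nonnegative initial data on the next interval together with the standing hypothesis $\dt\cS\geq0$.

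The key step is to control the sign of the source $g_w$ solving \eqref{eq.g}. For $w\geq0$ the right-hand side of \eqref{eq.g} is nonnegative: indeed $\rhow\geq0$ by the same reasoning as in Lemma \ref{prop.rho.bounds}, and $\zeta(w)\geq\ztmin>0$ by assumptions \ref{hypo.data.nl}, so $\zeta(w)\rhow w\geq0$, while $\e\dt\cS\geq0$ by hypothesis. Then I would invoke the weak maximum principle for $\muze-\e\Delta$ with Dirichlet data: testing \eqref{eq.g} against $g_w^-$ and integrating by parts (the boundary term vanishes since $g_w\in\bhoz$) yields
$$
\int_\om \muze\, (g_w^-)^2\, d\bfx + \e\int_\om |\nabla g_w^-|^2\, d\bfx = -\int_\om \Big(\int_\rr \zeta(w)\rhow w\, da + \e\dt\cS\Big)\, g_w^-\, d\bfx \leq 0 .
$$
Since $\muze\geq\mumin>0$, both terms on the left are nonnegative, forcing $g_w^-=0$, i.e. $g_w\geq0$ a.e., and hence $T_k(g_w)=\min(g_w,k)\geq0$ as well.

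It then remains to transfer this to $u_w=\ov{\Phi}(w)$ through the transport equation \eqref{eq.u.eps.nl}. Solving $(\e\dt+\da)u_w=T_k(g_w)$ by the method of characteristics as in the derivation of \eqref{eq.def.veps}, one obtains for $t\geq\e a$ that $u_w(\bfx,a,t)=\e^{-1}\int_{t-\e a}^{t} T_k(g_w)(\bfx,\sigma)\,d\sigma\geq0$, and for $t<\e a$ that $u_w(\bfx,a,t)=\vepsi(\bfx,a-t/\e)+\e^{-1}\int_0^{t} T_k(g_w)(\bfx,\sigma)\,d\sigma\geq0$, using $\vepsi\geq0$, the nonnegative boundary value $u_w(\bfx,0,t)=0$, and $T_k(g_w)\geq0$. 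This establishes $\ov{\Phi}(Y_T^+)\subseteq Y_T^+$ and completes the argument, the untruncated identification being provided by Corollary \ref{coro.ex.glob}.

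I expect the main obstacle to be the circular coupling between $\veps$ and $g$: the sign of $g$ depends on that of $\veps$ through the integral source, and conversely. The resolution is precisely the monotone cone structure above, which decouples the two implications within a single application of $\ov{\Phi}$. A secondary technical point is the justification of the maximum principle for the $\bfx$-dependent, merely $L^\infty$ coefficient $\muze$ with an $L^2$ right-hand side; in the present one-dimensional setting with $\muze\geq\mumin>0$ this is handled directly by the energy identity displayed above, so no extra regularity is required.
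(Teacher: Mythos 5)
Your proposal is correct, but it takes a genuinely different route from the paper. The paper does not revisit the fixed-point iteration at all: it introduces an auxiliary ``positivized'' system \eqref{eq.veps.mod.plus} in which the elliptic source is built from $[\vepsp]_+$, so that its right-hand side is nonnegative \emph{regardless} of the sign of the unknown; the weak maximum principle (quoted from Gilbarg--Trudinger) then gives $g_+\geq 0$, a Kato-type inequality ($\e \dt [\vepsp]_- + \da [\vepsp]_- = H_-(\vepsp)\, g_+ \geq 0$, justified via Lemma 3.1 of \cite{MiOel.1}) propagates nonnegativity along characteristics, and finally uniqueness identifies $(\rhoe(\vepsp),\vepsp)$ with $(\rhoe(\veps),\veps)$. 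You instead prove invariance of the closed cone $Y_T^+$ under the contraction $\ov{\Phi}$ of the truncated existence theorem: within a single Picard step the sign of the source $g_w$ is decoupled from the sign of the output $u_w$, so no auxiliary system and no Kato inequality are needed --- the explicit Duhamel formulas suffice --- and positivity of the fixed point follows from closedness of the cone, with the identification to the untruncated problem supplied by Corollary \ref{coro.ex.glob}. What each approach buys: yours is more elementary and reuses the existing construction verbatim (your hand-made energy proof of the maximum principle via testing with $g_w^-$ also avoids the regularity bookkeeping behind the citation of \cite{GiTru.Book}), while the paper's argument is independent of how the solution was constructed and would survive a change of existence proof, at the cost of having to repeat the global existence theory for the modified system. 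One small inaccuracy in your write-up: you invoke $\muze \geq \mumin > 0$ to conclude $g_w^- = 0$, but under assumptions \ref{hypo.data.nl} only $\beps \geq 0$ is assumed, and the lower bound $\mumin>0$ is available only when $\bmin>0$ (Theorem \ref{thm.positive.for.ever}); fortunately your energy identity does not need it, since $\muze \geq 0$ together with $\e\int_\om |\nabla g_w^-|^2\, d\bfx = 0$, $g_w^- \in \bhoz$ and the Poincar\'e inequality already forces $g_w^- = 0$. With that phrase corrected, your argument is complete.
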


\begin{proof}{}
We define $[u]_-$ (resp. $[u]_+$) the negative (resp. positive) part of $u$ {\rm i.e.} $[u]_- := \min(0,u)$,
(resp. $[u]_+ := \max(0,u)$). We set $H_-(u):=-\sgn_-(u)$ with $\sgn_-$ being the negative part of the sign function.
We look for $(\rhoe,\vepsp)$ solving  the coupled system :
\begin{equation}\label{eq.veps.mod.plus}
  \left\{ 
\begin{aligned}
& \e \dt \vepsp + \da \vepsp = g_+ &\bfx \in \om,\, a>0,  t>0 \, , \\
& (\muze-\e \Delta)g_+= \left(  \e \dt \cS + \int_0^\infty \left(\zeta_\e [\vepsp]_+ \rhoe\right)(t,\tia) \; d\tia \right) \,,& (\bfx,t) \in \om\times (0,T),\\
& \vepsp(\bfx,0,t)= 0\,, &t>0  \; , \\
& \vepsp(\bfx,a,t)= 0\,, &\bfx\in \dom,\; t>0  \; , \\
& \vepsp(\bfx,a,0)=\vepsi(\bfx,a)\,, &\bfx \in \om,\, a > 0  \; ,
\end{aligned}
\right.
\end{equation}
together with $\rhoe(\vepsp)$ being the solution of \eqref{eq.rho.eps.nl} with the death rate $\zeta(\vepsp)$. 
%We underline that with this modification the right hand side  in \eqref{eq.veps.mod.plus} is always positive under hypothesis $ii)$ of the claim.
The results of Corollary \ref{coro.ex.glob}  can be repeated and provide global existence and uniqueness. 
Multiplying \eqref{eq.veps.mod.plus} by $H_-(\vepsp)$, (cf the rigorous explanation in Lemma 3.1 \cite{MiOel.1} that holds here for a. e. $\bfx \in \om$), one gets 
$$
\e \dt [\vepsp]_- + \da [\vepsp]_- = H_-(\vepsp) g_+ \;. 
$$
Because of the weak maximum principle (Theorem 8.1, p.179 \cite{GiTru.Book}), $g_+ \geq 0$.
As  $H_-$ is positive, one concludes that~:
$$
\e \dt [\vepsp]_- + \da [\vepsp]_- \geq 0
$$
which using the Duhamel formula provides that
$$
0 \geq [\vepsp(\bfx,a,t)]_- \geq 
\begin{cases}
 [\vepsp(\bfx,0,t-\e a)]_- = 0 &  \text{ if }t\geq \e a, \\
  [\vepsi( \bfx,a - t / \e )]_- = 0 &  \text{ if }t\leq \e a. \\
\end{cases}
$$
for almost every $\bfx\in\om$.
But as $\vepsp$ is then almost everywhere positive $(\rhoe,\vepsp)$ satisfies as well 
system \eqref{eq.rho.eps.nl}-\eqref{eq.u.eps.nl}, which by uniqueness
 proves that actually $(\rhoe(\veps),\veps)=(\rhoe(\vepsp),\vepsp)$, which implies
the claim.
\end{proof}

Under hypotheses above $\veps$ is  positive and thus the equation satisfied bu $\muze$
can be made explicit if we suppose that $\zeta(u):=1+|u|$ for instance. Indeed, 
$$
%\left\{
\begin{aligned}
& \e \dt \muze - \beps ( 1 - \muze ) + \int_\rr \zeta(\veps) \rhoe da  = \e \dt \muze - \beps ( 1 - \muze ) + \muze + \int_\rr \rhoe \veps da =\\
&= \e \dt \muze - \beps ( 1 - \muze ) + \muze +  \cS  + \Delta \zeps = 0. \\
\end{aligned}
%\right.
$$
According to that, one sees that there is a new balance of terms when compared 
to the case without the Laplace operator considered in \cite{MiOel.3}.
\begin{equation}\label{eq.mu.nl}
%\left\{
% \begin{aligned}
 (\e \dt + (\beps +1)) \muze + \Delta \zeps + \cS = \beps
%\end{aligned}
%\right.
\end{equation}
Indeed without the Laplace operator, there could be a sufficient tear-off ($\cS$ large enough)
so that the birth source term becomes too small and $\muze$ is shown to go to zero 
in finite time (see Proposition 7.3 \cite{MiOel.3}). 
It suffices to take $\cS_{\min} > \bmax$ for example.
Here instead, the presence of the 
Laplace operator stabilizes the exterior force and provides 
global existence.  One observes that if $\cS$ and $\beps$  converge as time grows to some functions of $\bfx$, 
the asymptotic profile (for large times) $(\rho_\infty,z_\infty)$ is s.t.  
 $$
 \cS_\infty = -\Delta z_\infty, \quad \mu_\infty(\bfx) = \frac{\beta_\infty}{\beta_\infty +1}.
 $$
Indeed $\cL_\e z_\infty=0$ so that the first equation is the asymptotic limit in time of \eqref{eq.z.eps},
while the second comes from \eqref{eq.mu.nl} with $\dt \mu_\infty=0$.

\subsection{A numerical simulation}

We discretize \eqref{eq.rho.eps.nl} using an explicit upwind method with the CFL constant 
being equal to 1. We use a trapezoidal rule to compute the non-local boundary condition $\rhoe(\bfx,0,t)=\beps(\bfx,t)(1-\muze)$.
We solve \eqref{eq.z.eps} using a P2 Discontinuous Galerkin method
for the Laplace operator 
in space \cite{ErGu.04.book} and a trapezoidal rule to dicretize $\cL_\e$.

The constants are defined as : $\cS=1e4$, $\zp(\bfx,t)=\sin(\pi \bfx)/\pi$, the initial condition 
for $\rhoi = \exp(- a)$ is uniform with respect $\bfx$, $\zeta(u)=1+|u|$ and the maximal age
is $a_{\max}=10$ with a discretisation step $\Delta a=10^4$ and $\e=1e-3$. 
%In this numerical simulations we show a somehow opposite result to Theorem \eqref{thm.positive.for.ever}.
The on-rate $\beta$ in \eqref{eq.rho.eps.nl}  is defined s.t. it is $\zeps$ dependent 
$$
\beta(\bfx,t) = 
\begin{cases}
1 & \text{ if } \zeps \in (0,\ov{z}) \\
0 & \text{ otherwise}
\end{cases}
$$
with $\ov{z} = 1000$, 
then we observe at least locally in space that total extinction of bonds' population occurs.

\begin{figure}[ht!]
\begin{minipage}[c]{.46\linewidth}
% GNUPLOT: LaTeX picture with Postscript
\begingroup
  \makeatletter
  \providecommand\color[2][]{%
    \GenericError{(gnuplot) \space\space\space\@spaces}{%
      Package color not loaded in conjunction with
      terminal option `colourtext'%
    }{See the gnuplot documentation for explanation.%
    }{Either use 'blacktext' in gnuplot or load the package
      color.sty in LaTeX.}%
    \renewcommand\color[2][]{}%
  }%
  \providecommand\includegraphics[2][]{%
    \GenericError{(gnuplot) \space\space\space\@spaces}{%
      Package graphicx or graphics not loaded%
    }{See the gnuplot documentation for explanation.%
    }{The gnuplot epslatex terminal needs graphicx.sty or graphics.sty.}%
    \renewcommand\includegraphics[2][]{}%
  }%
  \providecommand\rotatebox[2]{#2}%
  \@ifundefined{ifGPcolor}{%
    \newif\ifGPcolor
    \GPcolortrue
  }{}%
  \@ifundefined{ifGPblacktext}{%
    \newif\ifGPblacktext
    \GPblacktexttrue
  }{}%
  % define a \g@addto@macro without @ in the name:
  \let\gplgaddtomacro\g@addto@macro
  % define empty templates for all commands taking text:
  \gdef\gplbacktext{}%
  \gdef\gplfronttext{}%
  \makeatother
  \ifGPblacktext
    % no textcolor at all
    \def\colorrgb#1{}%
    \def\colorgray#1{}%
  \else
    % gray or color?
    \ifGPcolor
      \def\colorrgb#1{\color[rgb]{#1}}%
      \def\colorgray#1{\color[gray]{#1}}%
      \expandafter\def\csname LTw\endcsname{\color{white}}%
      \expandafter\def\csname LTb\endcsname{\color{black}}%
      \expandafter\def\csname LTa\endcsname{\color{black}}%
      \expandafter\def\csname LT0\endcsname{\color[rgb]{1,0,0}}%
      \expandafter\def\csname LT1\endcsname{\color[rgb]{0,1,0}}%
      \expandafter\def\csname LT2\endcsname{\color[rgb]{0,0,1}}%
      \expandafter\def\csname LT3\endcsname{\color[rgb]{1,0,1}}%
      \expandafter\def\csname LT4\endcsname{\color[rgb]{0,1,1}}%
      \expandafter\def\csname LT5\endcsname{\color[rgb]{1,1,0}}%
      \expandafter\def\csname LT6\endcsname{\color[rgb]{0,0,0}}%
      \expandafter\def\csname LT7\endcsname{\color[rgb]{1,0.3,0}}%
      \expandafter\def\csname LT8\endcsname{\color[rgb]{0.5,0.5,0.5}}%
    \else
      % gray
      \def\colorrgb#1{\color{black}}%
      \def\colorgray#1{\color[gray]{#1}}%
      \expandafter\def\csname LTw\endcsname{\color{white}}%
      \expandafter\def\csname LTb\endcsname{\color{black}}%
      \expandafter\def\csname LTa\endcsname{\color{black}}%
      \expandafter\def\csname LT0\endcsname{\color{black}}%
      \expandafter\def\csname LT1\endcsname{\color{black}}%
      \expandafter\def\csname LT2\endcsname{\color{black}}%
      \expandafter\def\csname LT3\endcsname{\color{black}}%
      \expandafter\def\csname LT4\endcsname{\color{black}}%
      \expandafter\def\csname LT5\endcsname{\color{black}}%
      \expandafter\def\csname LT6\endcsname{\color{black}}%
      \expandafter\def\csname LT7\endcsname{\color{black}}%
      \expandafter\def\csname LT8\endcsname{\color{black}}%
    \fi
  \fi
    \setlength{\unitlength}{0.0500bp}%
    \ifx\gptboxheight\undefined%
      \newlength{\gptboxheight}%
      \newlength{\gptboxwidth}%
      \newsavebox{\gptboxtext}%
    \fi%
    \setlength{\fboxrule}{0.5pt}%
    \setlength{\fboxsep}{1pt}%
\begin{picture}(3960.00,2820.00)%
    \gplgaddtomacro\gplbacktext{%
      \csname LTb\endcsname%
      \put(648,508){\makebox(0,0)[r]{\strut{}$0$}}%
      \csname LTb\endcsname%
      \put(648,813){\makebox(0,0)[r]{\strut{}$200$}}%
      \csname LTb\endcsname%
      \put(648,1117){\makebox(0,0)[r]{\strut{}$400$}}%
      \csname LTb\endcsname%
      \put(648,1422){\makebox(0,0)[r]{\strut{}$600$}}%
      \csname LTb\endcsname%
      \put(648,1727){\makebox(0,0)[r]{\strut{}$800$}}%
      \csname LTb\endcsname%
      \put(648,2032){\makebox(0,0)[r]{\strut{}$1000$}}%
      \csname LTb\endcsname%
      \put(648,2336){\makebox(0,0)[r]{\strut{}$1200$}}%
      \csname LTb\endcsname%
      \put(648,2641){\makebox(0,0)[r]{\strut{}$1400$}}%
      \csname LTb\endcsname%
      \put(737,349){\makebox(0,0){\strut{}$0$}}%
      \csname LTb\endcsname%
      \put(1033,349){\makebox(0,0){\strut{}$0.1$}}%
      \csname LTb\endcsname%
      \put(1328,349){\makebox(0,0){\strut{}$0.2$}}%
      \csname LTb\endcsname%
      \put(1624,349){\makebox(0,0){\strut{}$0.3$}}%
      \csname LTb\endcsname%
      \put(1919,349){\makebox(0,0){\strut{}$0.4$}}%
      \csname LTb\endcsname%
      \put(2215,349){\makebox(0,0){\strut{}$0.5$}}%
      \csname LTb\endcsname%
      \put(2510,349){\makebox(0,0){\strut{}$0.6$}}%
      \csname LTb\endcsname%
      \put(2806,349){\makebox(0,0){\strut{}$0.7$}}%
      \csname LTb\endcsname%
      \put(3101,349){\makebox(0,0){\strut{}$0.8$}}%
      \csname LTb\endcsname%
      \put(3396,349){\makebox(0,0){\strut{}$0.9$}}%
      \csname LTb\endcsname%
      \put(3692,349){\makebox(0,0){\strut{}$1$}}%
      \csname LTb\endcsname%
      \put(973,1727){\makebox(0,0)[l]{\strut{}$\beta=1$}}%
      \csname LTb\endcsname%
      \put(2215,2489){\makebox(0,0)[l]{\strut{}$\beta=0$}}%
    }%
    \gplgaddtomacro\gplfronttext{%
      \csname LTb\endcsname%
      \put(123,1574){\rotatebox{-270}{\makebox(0,0){\strut{}$z$}}}%
      \csname LTb\endcsname%
      \put(2214,111){\makebox(0,0){\strut{}$\bfx$}}%
      \csname LTb\endcsname%
      \put(2183,1871){\makebox(0,0)[r]{\strut{}t=1e-4}}%
      \csname LTb\endcsname%
      \put(2183,1673){\makebox(0,0)[r]{\strut{}t=2e-4}}%
      \csname LTb\endcsname%
      \put(2183,1475){\makebox(0,0)[r]{\strut{}t=3e-4}}%
      \csname LTb\endcsname%
      \put(2183,1277){\makebox(0,0)[r]{\strut{}$\ov{z}$}}%
    }%
    \gplbacktext
    \put(0,0){\includegraphics{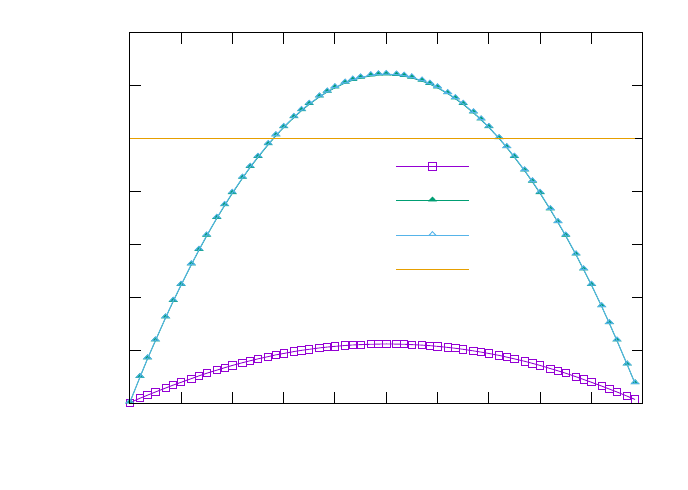}}%
    \gplfronttext
  \end{picture}%
\endgroup
\caption{ $\zeps(\bfx,t)$ at given times, when $t>2e-4$ the curves are superposed}\label{fig.z}
\end{minipage} \hfill
   \begin{minipage}[c]{.46\linewidth}
% GNUPLOT: LaTeX picture with Postscript
\begingroup
  \makeatletter
  \providecommand\color[2][]{%
    \GenericError{(gnuplot) \space\space\space\@spaces}{%
      Package color not loaded in conjunction with
      terminal option `colourtext'%
    }{See the gnuplot documentation for explanation.%
    }{Either use 'blacktext' in gnuplot or load the package
      color.sty in LaTeX.}%
    \renewcommand\color[2][]{}%
  }%
  \providecommand\includegraphics[2][]{%
    \GenericError{(gnuplot) \space\space\space\@spaces}{%
      Package graphicx or graphics not loaded%
    }{See the gnuplot documentation for explanation.%
    }{The gnuplot epslatex terminal needs graphicx.sty or graphics.sty.}%
    \renewcommand\includegraphics[2][]{}%
  }%
  \providecommand\rotatebox[2]{#2}%
  \@ifundefined{ifGPcolor}{%
    \newif\ifGPcolor
    \GPcolortrue
  }{}%
  \@ifundefined{ifGPblacktext}{%
    \newif\ifGPblacktext
    \GPblacktexttrue
  }{}%
  % define a \g@addto@macro without @ in the name:
  \let\gplgaddtomacro\g@addto@macro
  % define empty templates for all commands taking text:
  \gdef\gplbacktext{}%
  \gdef\gplfronttext{}%
  \makeatother
  \ifGPblacktext
    % no textcolor at all
    \def\colorrgb#1{}%
    \def\colorgray#1{}%
  \else
    % gray or color?
    \ifGPcolor
      \def\colorrgb#1{\color[rgb]{#1}}%
      \def\colorgray#1{\color[gray]{#1}}%
      \expandafter\def\csname LTw\endcsname{\color{white}}%
      \expandafter\def\csname LTb\endcsname{\color{black}}%
      \expandafter\def\csname LTa\endcsname{\color{black}}%
      \expandafter\def\csname LT0\endcsname{\color[rgb]{1,0,0}}%
      \expandafter\def\csname LT1\endcsname{\color[rgb]{0,1,0}}%
      \expandafter\def\csname LT2\endcsname{\color[rgb]{0,0,1}}%
      \expandafter\def\csname LT3\endcsname{\color[rgb]{1,0,1}}%
      \expandafter\def\csname LT4\endcsname{\color[rgb]{0,1,1}}%
      \expandafter\def\csname LT5\endcsname{\color[rgb]{1,1,0}}%
      \expandafter\def\csname LT6\endcsname{\color[rgb]{0,0,0}}%
      \expandafter\def\csname LT7\endcsname{\color[rgb]{1,0.3,0}}%
      \expandafter\def\csname LT8\endcsname{\color[rgb]{0.5,0.5,0.5}}%
    \else
      % gray
      \def\colorrgb#1{\color{black}}%
      \def\colorgray#1{\color[gray]{#1}}%
      \expandafter\def\csname LTw\endcsname{\color{white}}%
      \expandafter\def\csname LTb\endcsname{\color{black}}%
      \expandafter\def\csname LTa\endcsname{\color{black}}%
      \expandafter\def\csname LT0\endcsname{\color{black}}%
      \expandafter\def\csname LT1\endcsname{\color{black}}%
      \expandafter\def\csname LT2\endcsname{\color{black}}%
      \expandafter\def\csname LT3\endcsname{\color{black}}%
      \expandafter\def\csname LT4\endcsname{\color{black}}%
      \expandafter\def\csname LT5\endcsname{\color{black}}%
      \expandafter\def\csname LT6\endcsname{\color{black}}%
      \expandafter\def\csname LT7\endcsname{\color{black}}%
      \expandafter\def\csname LT8\endcsname{\color{black}}%
    \fi
  \fi
    \setlength{\unitlength}{0.0500bp}%
    \ifx\gptboxheight\undefined%
      \newlength{\gptboxheight}%
      \newlength{\gptboxwidth}%
      \newsavebox{\gptboxtext}%
    \fi%
    \setlength{\fboxrule}{0.5pt}%
    \setlength{\fboxsep}{1pt}%
\begin{picture}(3960.00,2820.00)%
    \gplgaddtomacro\gplbacktext{%
      \csname LTb\endcsname%
      \put(648,508){\makebox(0,0)[r]{\strut{}$1e{-8}$}}%
      \csname LTb\endcsname%
      \put(648,775){\makebox(0,0)[r]{\strut{}$1e{-7}$}}%
      \csname LTb\endcsname%
      \put(648,1041){\makebox(0,0)[r]{\strut{}$1e{-6}$}}%
      \csname LTb\endcsname%
      \put(648,1308){\makebox(0,0)[r]{\strut{}$1e{-5}$}}%
      \csname LTb\endcsname%
      \put(648,1575){\makebox(0,0)[r]{\strut{}$1e{-4}$}}%
      \csname LTb\endcsname%
      \put(648,1841){\makebox(0,0)[r]{\strut{}$1e{-3}$}}%
      \csname LTb\endcsname%
      \put(648,2108){\makebox(0,0)[r]{\strut{}$1e{-2}$}}%
      \csname LTb\endcsname%
      \put(648,2374){\makebox(0,0)[r]{\strut{}$1e{-1}$}}%
      \csname LTb\endcsname%
      \put(648,2641){\makebox(0,0)[r]{\strut{}$1e{0}$}}%
      \csname LTb\endcsname%
      \put(737,349){\makebox(0,0){\strut{}$0$}}%
      \csname LTb\endcsname%
      \put(1033,349){\makebox(0,0){\strut{}$0.1$}}%
      \csname LTb\endcsname%
      \put(1328,349){\makebox(0,0){\strut{}$0.2$}}%
      \csname LTb\endcsname%
      \put(1624,349){\makebox(0,0){\strut{}$0.3$}}%
      \csname LTb\endcsname%
      \put(1919,349){\makebox(0,0){\strut{}$0.4$}}%
      \csname LTb\endcsname%
      \put(2215,349){\makebox(0,0){\strut{}$0.5$}}%
      \csname LTb\endcsname%
      \put(2510,349){\makebox(0,0){\strut{}$0.6$}}%
      \csname LTb\endcsname%
      \put(2806,349){\makebox(0,0){\strut{}$0.7$}}%
      \csname LTb\endcsname%
      \put(3101,349){\makebox(0,0){\strut{}$0.8$}}%
      \csname LTb\endcsname%
      \put(3396,349){\makebox(0,0){\strut{}$0.9$}}%
      \csname LTb\endcsname%
      \put(3692,349){\makebox(0,0){\strut{}$1$}}%
    }%
    \gplgaddtomacro\gplfronttext{%
      \csname LTb\endcsname%
      \put(123,1574){\rotatebox{-270}{\makebox(0,0){\strut{}$\mu$}}}%
      \csname LTb\endcsname%
      \put(2214,111){\makebox(0,0){\strut{}$\bfx$}}%
      \csname LTb\endcsname%
      \put(2183,1772){\makebox(0,0)[r]{\strut{}t=1e-4}}%
      \csname LTb\endcsname%
      \put(2183,1574){\makebox(0,0)[r]{\strut{}t=2e-4}}%
      \csname LTb\endcsname%
      \put(2183,1376){\makebox(0,0)[r]{\strut{}t=3e-4}}%
    }%
    \gplbacktext
    \put(0,0){\includegraphics{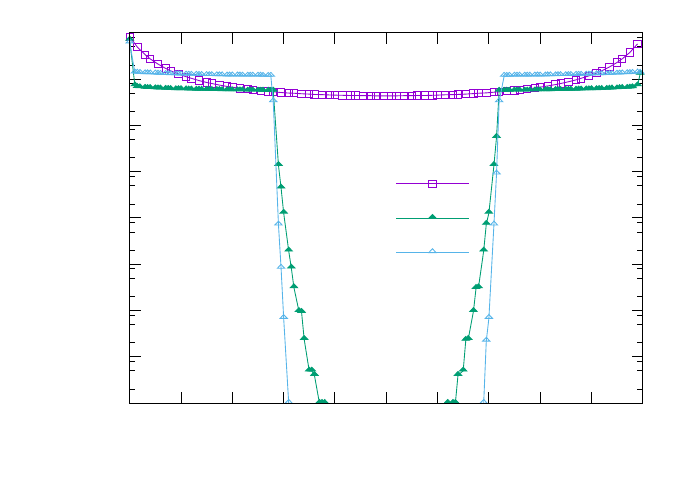}}%
    \gplfronttext
  \end{picture}%
\endgroup

\caption{ $\muze(\bfx,t)$ at given times, in logscale }\label{fig.mu}
%\caption{ploting $\muze$}\label{fig.z}
\end{minipage} \hfill
\end{figure}

The non-local boundary condition does not exactly fit in the framework presented in this section,
since $\beps$ depends on $\zeps$ which is not in assumptions \eqref{hypo.data.nl}, but conditionally 
$\beps$ vanishes. Nevertheless the previous results could be extended in this case. 
The fact that $\beps$ vanishes contradicts the hypothesis of Theorem \ref{thm.positive.for.ever}. 
We display in figure \ref{fig.z} the displacement $\zeps$ as a function of $\bfx$ for  different 
times and in figure \ref{fig.mu},  $\muze$ is displayed as well. 
Since the convergence towards the steady state is exponential we focus on small times.
One observes that asymptotically in time  two regimes occur:
either $z_\infty > \ov{z}$ and then $\mu_\infty=0$, or $z_\infty<\ov{z}$ and $\mu_\infty=\nud$.
One should note that in this case, there is an elliptic-parabolic transition inside the
domain since $\cL_\e$ may vanish on some compact sub-interval.

In order to conclude, this simulation show that in order to have detachments of
an adhesion site, it seems that a necessary condition is that
the adhesion on-rate should vanish, 
at the contrary to what was shown in the single point adhesion model \cite{MiOel.3}
where the explosion of the non-linear death-rate was enough.

\appendix

\addtocontents{toc}{\protect\setcounter{tocdepth}{1}}
\makeatletter
\addtocontents{toc}{%
  \begingroup
  \let\protect\l@chapter\protect\l@section
  \let\protect\l@section\protect\l@subsection
}
\makeatother

 \section{Euler-Lagrange equation versus minimization}\label{sec.equiv}

 \begin{lem}\label{lem.equiv}
 The function $\zeps \in X_T$ is the weak solution of system \eqref{eq.z.eps} if and only if it satisfies \eqref{eq.minimiz}-\eqref{eq.nrj}.
 \end{lem}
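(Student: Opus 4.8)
The plan is to exploit the fact that, at each frozen time $t$, both the history $\zeps(\bfx,t-\e a)$ and the density $\rhoe(\bfx,a,t)$ are \emph{given} data, so that $w\mapsto\cet(w)$ is a strictly convex quadratic functional on $H^1_0(\Omega)$. Expanding the square $|w(\bfx)-\zeps(\bfx,t-\e a)|^2$ and using $\muze(\bfx,t)=\int_\rr\rhoe(\bfx,a,t)\,da$, I would write $\cet(w)=\nud A(w,w)-\ell(w)+c_t$, where
\begin{equation*}
A(w,v):=\int_\Omega \nabla w\cdot\nabla v\,d\bfx+\int_\Omega \frac{\muze(\bfx,t)}{\e}\,w\,v\,d\bfx,\qquad \ell(v):=\int_\Omega\Big(\int_\rr \frac{\zeps(\bfx,t-\e a)}{\e}\rhoe(\bfx,a,t)\,da\Big)v\,d\bfx,
\end{equation*}
and $c_t$ collects the $w$-independent terms. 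The bilinear form $A$ is continuous (since $\muze<1$ by Lemma~\ref{prop.rho.bounds}) and coercive on $H^1_0(\Omega)$ (since $\muze\geq0$, so $A(w,w)\geq\int_\Omega|\nabla w|^2\,d\bfx$, whence coercivity by Poincar\'e). The linear form $\ell$ is bounded on $H^1_0(\Omega)$: this is exactly the $L^2(\Omega)$-integrability of the history term already established in Lemma~\ref{lem.ci} (for the part $t-\e a<0$) together with $\zeps\in X_T$ and $\rhoe\in L^\infty$ (for the part $t-\e a\geq0$). Hence $\cet$ is strictly convex, coercive and bounded below, so it admits a unique minimizer over $H^1_0(\Omega)$.

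Next I would compute the G\^ateaux derivative: for $w,\varphi\in H^1_0(\Omega)$,
\begin{equation*}
\frac{d}{ds}\Big|_{s=0}\cet(w+s\varphi)=A(w,\varphi)-\ell(\varphi)=\int_\Omega \nabla w\cdot\nabla\varphi\,d\bfx+\int_\Omega\int_\rr \frac{w(\bfx)-\zeps(\bfx,t-\e a)}{\e}\rhoe(\bfx,a,t)\,da\,\varphi(\bfx)\,d\bfx.
\end{equation*}
Because $\cet$ is convex, $\zeps(\cdot,t)$ minimizes $\cet$ if and only if this derivative vanishes for every $\varphi\in H^1_0(\Omega)$. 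Evaluating the inner factor at $w=\zeps(\cdot,t)$ and using $\muze\,\zeps(\bfx,t)=\int_\rr \zeps(\bfx,t)\rhoe\,da$, the quantity $\tfrac{1}{\e}\int_\rr(w-\zeps(\cdot,t-\e a))\rhoe\,da$ becomes exactly $\cL_\e(\zeps,\rhoe)(\bfx,t)$. Thus the vanishing of the first variation coincides term by term with the weak formulation \eqref{eq.weak.eps}, i.e.\ with Definition~\ref{def.weak.sol.z.eps}.

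This delivers both implications simultaneously. If $\zeps(\cdot,t)$ realizes the minimum in \eqref{eq.minimiz}--\eqref{eq.nrj}, then the Euler--Lagrange identity \eqref{eq.weak.eps} holds for a.e.\ $t$, so $\zeps$ is a weak solution of \eqref{eq.z.eps}; conversely, if $\zeps\in X_T$ solves \eqref{eq.weak.eps} for a.e.\ $t$, then by strict convexity it is the unique critical point and hence the unique minimizer of $\cet$, so \eqref{eq.minimiz} holds. I expect the only genuinely delicate point to be the integrability bookkeeping, namely that the delay term $\int_\rr\frac{|w-\zeps(\cdot,t-\e a)|^2}{\e}\rhoe\,da$ is finite and that $\ell$ is $H^1_0$-bounded uniformly in $t$; but this requires no new estimate, being covered by Lemma~\ref{lem.ci} and the $L^\infty$ bound of Lemma~\ref{prop.rho.bounds}, so the convexity argument itself is entirely routine.
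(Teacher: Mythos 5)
Your proposal is correct and is in substance the paper's own argument: the appendix proof of Lemma \ref{lem.equiv} likewise rests on convexity of the quadratic energy, obtaining the direction ``weak solution $\Rightarrow$ minimizer'' from a pointwise convexity inequality (for the square function, multiplied by $\rhoe\geq 0$ and integrated) combined with testing \eqref{eq.weak.eps} against $\zeps-v$, and the converse from the vanishing first variation $i'(0)=0$ with $i(\tau):=\cet(\zeps+\tau v)$ --- exactly your G\^ateaux-derivative computation, so your explicit $\nud A(w,w)-\ell(w)+c_t$ expansion with the continuity/coercivity checks is a sound repackaging rather than a different method. The extra content you add (existence and uniqueness of the minimizer via the Lax--Milgram-type structure, with $\ell$ bounded by Lemma \ref{lem.ci} and the $L^\infty$ bound on $\rhoe$) goes beyond what the lemma asserts but is correct and harmless.
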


 \begin{proof}
  As the square function is convex, one has for all $v \in H^1_0(\om)$ that~:
  $$
 \begin{aligned}
 \nud &\left\{ ( \zeps(\bfx,t)-\zeps(\bfx,t-\e a))^2 - 
 ( v(\bfx)-\zeps(\bfx,t-\e a))^2 \right\} \\
 &\leq 
 ( \zeps(\bfx,t)-\zeps(\bfx,t-\e a))(\zeps(\bfx,t)-v(\bfx))   
 \end{aligned}
  $$
 multiplying by $\rhoe\geq 0$,  integrating in age and then in space, one gets that
 $$
 \begin{aligned}
 \frac{1}{2 \e}&\left\{ \int_\om \int_\rr( \zeps(\bfx,t)-\zeps(\bfx,t-\e a))^2 \rhoe(\bfx,a,t) da d\bfx  \right. \\
 & - \left.\int_\om\int_\rr ( v(\bfx)-\zeps(\bfx,t-\e a))^2 \rhoe(\bfx,a,t) da d \bfx \right\} 
 \leq 
 \left( \cL_\e( \zeps,\rhoe ),\zeps-v\right) . 
 \end{aligned}
  $$
  As $\zeps$ is a weak solution in the sense of Definition \ref{def.weak.sol.z.eps}, 
  and $\zeps-v$ is in the test space, one can write that
  $$
  ( \cL_\e(\zeps,\rhoe ),\zeps-v) + (\nabla \zeps,\nabla(\zeps -v)) = 0
  $$
  and using the previous convexity argument, one concludes that
  $$
  \cet(\zeps(\cdot,t)) \leq \cet(v), \quad \forall v \in H^1_0(\om).
  $$
  Conversely, set $i(\tau):= \cet(\zeps+\tau v)$ for any $v \in H^1_0(\om)$, then since $\zeps$ satisfies 
  \eqref{eq.minimiz}, one has $i'(0)=0$. As the expression is explicit with respect to $\tau$ the claim follows by simple computations. 
 \end{proof}
\vspace{-0.75 cm}

\section{A density result}

\begin{lem}\label{lem.density}
 $\Omega$ is a Lipschitz bounded domain, the set  $\{ u \in C^\infty(\Omega\times[0,T])$ s.t. %\text{ s.t. } 
 $u=0$ on %\text{ on }
 $\partial \Omega\times [0,T]\}$, 
  is dense in $\dot{H}^1(\qt) := \{ u \in H^1(\Omega\times[0,T])$ s.t. $u =0 \text{ on }\partial \Omega\times [0,T]\}$
 endowed with the 
 %and take V to be the closure of U with respect to the 
 $H^1(\Omega\times[0,T])$ norm.
 %, then
\end{lem}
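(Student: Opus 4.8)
The plan is to combine a mollification in time with the classical approximation of $\bhoz$ by $C^\infty_c(\om)$, the point being that the lateral trace is a condition on the spatial variable alone, whereas the endpoints $t=0$ and $t=T$ carry no constraint. Throughout I use that $u\in\dot{H}^1(\qt)$ means exactly $u\in H^1(\qt)$ with $u(\cdot,t)\in\bhoz$ for a.e.\ $t$.

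First I would smooth in time. Extend $u$ across $t=0$ and $t=T$ by even reflection (or any bounded $H^1$-extension acting on the $t$-variable) to obtain $\tilde u\in H^1(\om\times(-\delta,T+\delta))$; since reflection does not touch $\bfx$, one still has $\tilde u(\cdot,t)\in\bhoz$ for a.e.\ $t$. Mollifying in time only, $u^\eta:=\tilde u\ast_t\varrho_\eta$, yields $u^\eta\to u$ in $H^1(\qt)$, with $u^\eta$ of class $C^\infty$ in $t$; moreover $\nabla_\bfx u^\eta=(\nabla_\bfx\tilde u)\ast_t\varrho_\eta$ shows $u^\eta\in C^\infty([0,T];\bho)$, and each value $u^\eta(\cdot,t)$ is an average of elements of the closed subspace $\bhoz$, hence lies in $\bhoz$. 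Thus it suffices to approximate such a time-smooth, $\bhoz$-valued function.

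Second, I would approximate in space, uniformly in $t$. Because $\om$ is Lipschitz, the classical construction (partition of unity, inward translation in a cone direction after extension by zero, and mollification) provides operators $S_\sigma$ mapping $\bhoz$ into $C^\infty_c(\om)$ that are bounded on $\bho$ uniformly in $\sigma$, satisfy $S_\sigma v\to v$ in $\bho$ for every $v\in\bhoz$, and $S_\sigma w\to w$ in $L^2(\om)$ for every $w\in L^2(\om)$. These operators act on the spatial variable only, so they commute with $\dt$ and preserve the $C^\infty_t$ regularity, and $S_\sigma u^\eta$ vanishes in a neighbourhood of $\dom$, hence on $\dom\times[0,T]$. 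Since $t\mapsto u^\eta(\cdot,t)$ is continuous into $\bhoz$, its range is compact there, so a uniform-boundedness argument upgrades the strong convergence $S_\sigma\to\mathrm{Id}$ to uniform convergence on this range: $\sup_{t}\nrm{S_\sigma u^\eta(\cdot,t)-u^\eta(\cdot,t)}{\bho}\to0$. Applying the same reasoning to $\dt u^\eta$ in $L^2(\om)$ and using $\dt(S_\sigma u^\eta)=S_\sigma(\dt u^\eta)$, we obtain $S_\sigma u^\eta\to u^\eta$ in $H^1(\qt)$. A diagonal choice $\sigma=\sigma(\eta)\to0$ then produces the desired smooth approximants vanishing on the lateral boundary.

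The main obstacle is the spatial step: one must exhibit the approximating family $S_\sigma$ with all three properties at once (range in $C^\infty_c(\om)$, uniform $\bho$-bound, and strong convergence in both $\bho$ on $\bhoz$ and $L^2(\om)$), which is precisely the content of the classical proof that $\bhoz=\overline{C^\infty_c(\om)}$ for Lipschitz domains; the only new point is to run it with constants independent of $t$, which is automatic since $\om$ does not vary in time, and then to invoke compactness of the time-slice range to pass from pointwise to uniform-in-$t$ convergence. The time-smoothing step and the diagonal argument are routine.
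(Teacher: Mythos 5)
Your proof is correct, but it follows a genuinely different route from the paper's. The paper disposes of the lemma in two lines by citing Ladyzhenskaya--Solonnikov--Uraltseva (Lemma 4.12, p.~89 of \cite{lady.book}): finite sums $\sum_{k=1}^N d_k(t)\psi_k(\bfx)$ with $d_k\in C^\infty([0,T])$ and $(\psi_k)$ a fundamental system in $H^1_0(\om)$ are dense in $\dot{H}^1(\qt)$, after which each $\psi_k$ is replaced by a nearby element of ${\mathcal D}(\om)$, the finiteness of $N$ controlling the error. You instead build the approximants by hand: even reflection and mollification in $t$ to reduce to $u^\eta\in C^\infty([0,T];\bhoz)$ (correctly using that $\bhoz$ is closed, so Bochner averages stay in it), then a uniformly bounded family $S_\sigma:\bhoz\to C^\infty_c(\om)$ acting in $\bfx$ only, with strong convergence upgraded to uniform-in-$t$ convergence via compactness of the ranges of $t\mapsto u^\eta(\cdot,t)$ in $\bhoz$ and of $t\mapsto \dt u^\eta(\cdot,t)$ in $L^2(\om)$, followed by a diagonal extraction. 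Each approach has its price: the paper's is shorter but rests on the tensor-product density lemma from \cite{lady.book}, whereas yours is self-contained and even yields the slightly stronger conclusion that the approximants vanish in a full neighbourhood of $\dom\times[0,T]$; on the other hand, the existence of $S_\sigma$ with all three properties (range in $C^\infty_c(\om)$, uniform $\bho$-bound, strong convergence in both $\bho$ and $L^2$) is exactly the partition-of-unity/inward-translation/mollification construction for Lipschitz domains, i.e.\ the same ingredient the paper implicitly invokes when approximating each $\psi_k\in\bhoz$ by a ${\mathcal D}(\om)$ function --- both proofs ultimately lean on the identification of trace-zero $H^1$ functions with $\overline{C^\infty_c(\om)}$, which is where the Lipschitz hypothesis enters.
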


\begin{proof}
 According to \cite{lady.book}, p.89, Lemma 4.12, 
 the set of functions of the form $\sum_{k=1}^N d_k(t)\psi_k(x)$ is dense in $\dot{H}^1(\qt)$, 
 where $d_k(t) \in C^\infty([0,T])$ and $(\psi_k)_{k\in{\mathbb N}}$ is a fundamental system of functions in $H^1_0(\Omega)$.
Then, approximating each $\psi_k\in H^1_0(\Omega)$ by a ${\mathcal D}(\Omega)$ 
function completes the proof, since the number $N$ is finite. 
\end{proof}

\addtocontents{toc}{\endgroup}
\vspace{-0.75 cm}
\section*{References}

\bibliographystyle{alpha}

\begin{thebibliography}{10}

\bibitem{pmid10204394}
W.~Alt and M.~Dembo.
\newblock {{C}ytoplasm dynamics and cell motion: two-phase flow models}.
\newblock {\em Math Biosci}, 156(1-2):207--228, Mar 1999.

\bibitem{FaBo.Book}
F.~Boyer and P.~Fabrie.
\newblock {\em Mathematical tools for the study of the incompressible
  {N}avier-{S}tokes equations and related models}, volume 183 of {\em Applied
  Mathematical Sciences}.
\newblock Springer, New York, 2013.

\bibitem{bray2001cell}
D.~Bray.
\newblock {\em Cell Movements: From Molecules to Motility}.
\newblock Garland Pub., 2001.

\bibitem{BrePon}
H.~Brezis and A.~C. Ponce.
\newblock Kato's inequality up to the boundary.
\newblock {\em Commun. Contemp. Math.}, 10(6):1217--1241, 2008.

\bibitem{ErGu.04.book}
A.~Ern and J.-L. Guermond.
\newblock {\em Theory and Practice of Finite Elements}, volume 159 of {\em
  Applied Mathematical Series}.
\newblock Springer-Verlag, New York, 2004.

\bibitem{Evans.Book}
L.~C. Evans.
\newblock {\em Partial differential equations}, volume~19 of {\em Graduate
  Studies in Mathematics}.
\newblock American Mathematical Society, Providence, RI, 1998.

\bibitem{GiTru.Book}
D.~Gilbarg and N.~S. Trudinger.
\newblock {\em Elliptic partial differential equations of second order}.
\newblock Classics in Mathematics. Springer-Verlag, Berlin, 2001.
\newblock Reprint of the 1998 edition.

\bibitem{grec:hal-01566770}
B{\'e}r{\'e}nice Grec, Bertrand Maury, Nicolas Meunier, and Laurent Navoret.
\newblock {A 1D model of leukocyte adhesion coupling bond dynamics with blood
  velocity}.
\newblock working paper or preprint, July 2017.

\bibitem{Grisvard}
P.~Grisvard.
\newblock {\em Elliptic problems in nonsmooth domains}, volume~24 of {\em
  Monographs and Studies in Mathematics}.
\newblock Pitman (Advanced Publishing Program), Boston, MA, 1985.

\bibitem{lady.book}
O.~A. Ladyzhenskaya, V.~A. Solonnikov, and N.~N. Uraltseva.
\newblock {\em Linear and quasilinear equations of parabolic type}.
\newblock Izdat. ``Nauka'', Moscow, 1967.

\bibitem{Li2003}
F.~Li, S.~D. Redick, H.~P. Erickson, and V.~T. Moy.
\newblock Force measurements of the $\alpha$5$\beta$1 integrin–fibronectin
  interaction.
\newblock {\em Biophysical Journal}, 84(2):1252 -- 1262, 2003.

\bibitem{MR3385931}
A.~Manhart, D.~Oelz, C.~Schmeiser, and N.~Sfakianakis.
\newblock An extended {F}ilament {B}ased {L}amellipodium {M}odel produces
  various moving cell shapes in the presence of chemotactic signals.
\newblock {\em J. Theoret. Biol.}, 382:244--258, 2015.

\bibitem{MR3675606}
A.~Manhart, D.~Oelz, C.~Schmeiser, and N.~Sfakianakis.
\newblock Numerical treatment of the filament-based lamellipodium model
  ({FBLM}).
\newblock In {\em Modeling cellular systems}, volume~11 of {\em Contrib. Math.
  Comput. Sci.}, pages 141--159. Springer, Cham, 2017.

\bibitem{MiOel.1}
V.~Mili{\v{s}}i{\'c} and D.~Oelz.
\newblock On the asymptotic regime of a model for friction mediated by
  transient elastic linkages.
\newblock {\em J. Math. Pures Appl. (9)}, 96(5):484--501, 2011.

\bibitem{MiOel.2}
V.~Mili{\v{s}}i{\'c} and D.~Oelz.
\newblock On a structured model for the load dependent reaction kinetics of
  transient elastic linkages.
\newblock {\em SIAM J. Math. Anal.}, 47(3):2104--2121, 2015.

\bibitem{MiOel.3}
V.~Mili{\v{s}}i{\'c} and D.~Oelz.
\newblock Tear-off versus global existence for a structured model of adhesion
  mediated by transient elastic linkages.
\newblock {\em Commun. Math. Sci.}, 14(5):1353--1372, 2016.

\bibitem{mi.proc}
V.~Mili\v{s}i\'c.
\newblock Initial layer analysis for a linkage density in cell adhesion
  mechanisms.
\newblock preprint, submitted.

\bibitem{pmid8968574}
A.~Mogilner and G.~Oster.
\newblock {{C}ell motility driven by actin polymerization}.
\newblock {\em Biophys. J.}, 71(6):3030--3045, Dec 1996.

\bibitem{OeSch}
D.~Oelz and C.~Schmeiser.
\newblock Derivation of a model for symmetric lamellipodia with instantaneous
  cross-link turnover.
\newblock {\em Archive for Rational Mechanics and Analysis}, 198(3):963--980,
  2010.

\bibitem{OeSchVi}
D.~Oelz, C.~Schmeiser, and V.~Small.
\newblock Modelling of the actin-cytoskeleton in symmetric lamellipodial
  fragments.
\newblock {\em Cell Adhesion and Migration}, 2:117--126, 2008.

\bibitem{pmid17477841}
T.~D. Pollard.
\newblock {{R}egulation of actin filament assembly by {A}rp2/3 complex and
  formins}.
\newblock {\em Annu Rev Biophys Biomol Struct}, 36:451--477, 2007.

\bibitem{pmid12600310}
T.~D. Pollard and G.~G. Borisy.
\newblock {{C}ellular motility driven by assembly and disassembly of actin
  filaments}.
\newblock {\em Cell}, 112(4):453--465, Feb 2003.

\bibitem{PreVi}
L.~Preziosi and G.~Vitale.
\newblock A multiphase model of tumor and tissue growth including cell adhesion
  and plastic reorganization.
\newblock {\em Math. Models Methods Appl. Sci.}, 21(9):1901--1932, 2011.

\bibitem{3a67dc837b5f4ea3b74a3b7f102b2341}
Boris R., K.~Jacobson, and A.~Mogilner.
\newblock Multiscale two-dimensional modeling of a motile simple-shaped cell.
\newblock {\em Multiscale Modeling and Simulation}, 3(2):413--439, 2005.

\bibitem{Schw.book}
L.~Schwartz.
\newblock {\em Th\'eorie des distributions}.
\newblock Publications de l'Institut de Math\'ematique de l'Universit\'e de
  Strasbourg, No. IX-X. Nouvelle \'edition, enti\'erement corrig\'ee, refondue
  et augment\'ee. Hermann, Paris, 1966.

\bibitem{Suda_2001}
H.~Suda.
\newblock Origin of friction derived from rupture dynamics.
\newblock {\em Langmuir}, 17(20):6045--6047, 2001.

\end{thebibliography}
\def\cprime{$'$} \def\cprime{$'$}

%\end{thebibliography}

\end{document}